\newtheorem{theorem}{Theorem}[section]
\newtheorem{corollary}[theorem]{Corollary}
\newtheorem{lemma}[theorem]{Lemma}
\theoremstyle{definition}
\newtheorem{definition}[theorem]{Definition}
\newtheorem{remark}[theorem]{Remark}
\newtheorem{example}[theorem]{Example}
\theoremstyle{parrafo}
\numberwithin{equation}{theorem}
\begin{document}

\title[]{Optimal bounds on the modulus of continuity
 of the uncentered Hardy-Littlewood maximal function}

\author{J. M. Aldaz, L. Colzani and J. P\'erez L\'azaro}
\address{Departamento de Matem\'aticas,
Universidad Aut\'onoma de Madrid, 28049 Madrid, Spain.}
\email{jesus.munarriz@uam.es}
\address{Dipartimento di Matematica, Universit\`{a} di Milano-Bicocca,
 Edificio U5, via R. Cozzi 53, 20125 Milano, Italia.}
 \email{leonardo.colzani@unimib.it}
\address{Departamento de Matem\'aticas y Computaci\'on,
Universidad  de La Rioja, 26004 Logro\~no, La Rioja, Spain.}
\email{javier.perezl@unirioja.es}

\thanks{2000 {\em Mathematical Subject Classification.}
42B25, 26A84}

\thanks{The first and third authors were partially supported by Grant MTM2009-12740-C03-03 of the
D.G.I. of Spain}

\thanks{This research was initiated during a visit of the first named author to the Universit\`a degli Studi di Milano-Bicocca, and continued during a visit by the third named author to the same university; both thank the Dipartimento di Matematica e Applicazioni for its
hospitality.}






\begin{abstract} We obtain sharp bounds for the modulus of continuity of
the uncentered maximal function in terms of the modulus of continuity of the given function, via integral formulas. Some of the results deduced
from these formulas are the following: The best constants for Lipschitz and H\"older
functions on proper subintervals of  $\mathbb{R}$ are $\operatorname{Lip}_\alpha ( Mf) \le (1 +
\alpha)^{-1}\operatorname{Lip}_\alpha( f)$, $\alpha\in
(0,1]$. On  $\mathbb{R}$, the
best bound for Lipschitz functions is $ \operatorname{Lip} ( Mf) \le
(\sqrt2 -1)\operatorname{Lip}( f).$ In higher dimensions, we determine the  asymptotic behavior, as $d\to\infty$, of the norm of the maximal operator associated to  cross-polytopes, euclidean balls and cubes, that is,
$\ell_p$ balls for $p = 1, 2, \infty$. We do this for arbitrary moduli of continuity. In the specific case of Lipschitz and
H\"older functions, the operator norm of the maximal operator is uniformly
bounded by $2^{-\alpha/q}$, where $q$ is the conjugate exponent of
$p=1,2$, and as $d\to\infty$ the norms approach this bound.
 When
$p=\infty$, best constants are the same as when $p = 1$.
\end{abstract}


\maketitle


\section
{Introduction.}

\markboth{J. M. Aldaz, L. Colzani, J. P\'erez L\'azaro} {Modulus of
continuity of the maximal function}

The constants appearing in the weak and strong type
inequalities satisfied by the Hardy-Littlewood maximal operator, in its
different variants,  have
been subject to considerable scrutiny. We mention, for instance,
\cite{CF}, \cite{Al1}, \cite{Me1}, \cite{Me2}, \cite{GMM}, \cite{GM}, 
 \cite{GK}, \cite{BD},  \cite{CLM}, \cite{St1}, \cite{St2}, \cite{St3},
\cite{Bou1}, \cite{Bou2},
\cite{Bou3}, \cite{Ca}, \cite{Mu}, \cite{StSt}, 
 \cite{Al2},  \cite{Al3},  \cite{AlPe4},  \cite{AlPe5}, \cite{NaTa},
  and the references contained therein.
Interest lies not only in determining sharp inequalities,
which in general
are hard to come by (in fact, no
best constants are known for dimensions larger than one) but also in finding out how  constants
change as certain parameters (for instance, the dimension) vary, or when the type 
of set one is averaging over
is modified, or the space of functions one is considering is changed.

Here we study the issue of optimal inequalities satisfied
by the uncentered maximal operator $M$, and also its asymptotic
behavior, but from a different viewpoint: Instead of considering weak and
strong type inequalities, we analyze the properties of $M$
in connection with the modulus of continuity of a function.
The overall emerging pattern reveals that the 
uncentered
maximal operator
improves regularity, by preserving moduli and reducing constants.
 But in general there is no ``qualitative" improvement in the
modulus. For instance, it may happen that there is no change in
the H\"older exponent of a function,
not even in a weak, almost everywhere   sense.
More precisely, we will see that there are H\"older functions $f$ with exponent
$\alpha \in (0,1)$, and no better than $\alpha$ on a set of positive
measure, such that $Mf$ is also no better than H\"older $(\alpha)$
 on a set of the same measure, cf. Example \ref{cantor}.
We note that the preservation of regularity  does not extend to  $C^1$ functions,
see Remark \ref{ex}.

This article is part of a wider
 project,
which attempts to find out under which conditions and to what extent
the Hardy-Littlewood maximal operator improves the regularity of
functions, in different settings. If there is such an improvement, then one can try to
prove variants of inequalities involving derivatives (for example,
Gagliardo-Niremberg-Sobolev type inequalities) with $DMf$ replacing
$Df$. In applications one often needs to consider functions more
general than those belonging to Sobolev spaces, so it is natural to
look for this kind of inequalities under as
little regularity as possible. It is also possible to consider other
maximal operators, associated to smoother approximations of the
identity, but we do not pursue this line of research here.

The study of the
Hardy-Littlewood maximal operator acting on spaces that measure
smoothness was initiated in \cite{Ki} by J. Kinnunen , who
 proved its boundedness on
 $W^{1,p}(\mathbb{R}^{d})$ for $1<p\le\infty$, and also
on the Lipschitz and H\"older classes (without increasing the
corresponding constants); see also \cite{KiLi}, \cite{HaOn},
\cite{KiSa}, \cite{Ta},
 \cite{Lu}. In \cite{AlPe}, part of the project outlined in the previous paragraph
is carried out for  functions of bounded
variation and $d=1$ (the situation when $d > 1$ is still not well
understood, cf. \cite{AlPe2} and \cite{AlPe3}).  Unlike the H\"older case, here a qualitative gain in regularity does occur (cf. Theorem 2.5): If $f$ is of bounded variation, then the
{\em uncentered} Hardy-Littlewood maximal function $Mf$ is absolutely continuous  (however,  the
centered maximal function need not even be continuous), and
furthermore,
the variation is not increased by $M$. As application, a Landau type inequality under less regularity is presented in Theorem 5.1 of \cite{AlPe}.

Given $(\mathbb{R}^d, \|\cdot\|)$, where $\|\cdot\|$ is an arbitrary
norm in $\mathbb{R}^d$, and $f:\mathbb{R}^d\to \mathbb{R}$,
 we present optimal integral formulas for  the
modulus of continuity of $Mf$ in terms of the modulus of $f$. Note that
distances appearing  in the moduli of continuity and balls defining
the maximal function, are determined according to $\|\cdot\|$.
However, we always use the same underlying $d$ dimensional Lebesgue measure, regardless of the norm under consideration, i. e., there are no different normalizations of the measure for different norms. We take the viewpoint that there is (essentially) only one
norm in dimension one, the usual absolute value, and this fixes Lebesgue measure in every
dimension by the requirement that a cube of sidelength one has measure
one. This also forces us to utilize the standard definition (via
the euclidean length) and the standard
normalizations, when dealing with Hausdorff measures. Such a convention (or some analogous consistency condition) is needed, for instance, to use
Fubini's Theorem, and more generally, the coarea formula.

A very brief exposition of the contents of this paper follows next.
The main integral
formulas of the paper, valid for an arbitrary norm in
the global case, where the domain under
consideration is the whole space $\mathbb{R}^d$, appear in Theorem \ref{mainabst} and Corollary \ref{opnormequa}.
These formulas are then specialized to the following three norms
and their associated maximal functions:
The $\ell_\infty$ norm (cf. Theorem \ref{cubes}),
the  $\ell_2$ norm (cf. Theorem \ref{2balls}), and the
 $\ell_1$ norm (cf. Theorem \ref{1balls}). Their associated maximal
 functions are respectively defined by averaging over cubes, euclidean balls,
 and cross-polytopes. A few consequences of these Theorems are the
 following: The  norm of the maximal operator acting on Lipschitz
 functions is $\left\|  M\right\|
_{\operatorname{Op}(1)}= \sqrt2 - 1$ for $d=1$, cf. Corollary
\ref{lip1}. For the $\ell_\infty$ norm the best constants
in dimensions 2 and 3 are approximately $0.574$ and $0.66155$. The exact values appear in Corollaries \ref{lip2}
and \ref{lip3}. For arbitrary $d$, $\|M\|_{\operatorname{Op}(1)} > (d - 1)/(d + 1)$, and the error in this estimate is
  $o (1/(d + 1))$, cf. Corollary
  \ref{alldinftybounds}. For the $\ell_1$ norm all constants
  are exactly the same as for the $\ell_\infty$ norm, see Theorem \ref{1ballsareeq}.
By way of contrast, we note that in the euclidean case
and on the class of Lipschitz functions,
$\|M\|_{\operatorname{Op}(1)} \le 2^{-1/2}$ in every dimension,
and this bound is optimal. With respect to the preceding results,
some open questions are mentioned; we indicate one now: Since 
$\left\| M f \right\|
_{\infty} = \left\| f\right\|
_{\infty}$ and (under the euclidean norm) $\left\| D M f \right\|
_{\infty} \le 2^{-1/2} \left\| D f\right\|
_{\infty}$, the maximal operator is a contraction on
$W^{1,\infty}(\mathbb{R}^d)$, the Sobolev space of essentially bounded
functions with essentially bounded derivatives (note that the contraction is not
strict, consider for instance the constant functions). Is there some analogous
result for $p < \infty$ sufficiently large? 

The last section of this paper deals with the maximal operator on
 proper subintervals of $\mathbb{R}$. In this case, the operator
norm of $M$ when acting on the H\"older and Lipschitz  classes is $\left\| M\right\|
_{\operatorname{Op}(\alpha)} = (1 + \alpha)^{-1}$
(cf.
Corollary \ref{lipholloc}). The local, higher
dimensional case is not studied here; we only point out that
constants when $d>1$ depend on the geometry of the domain, as was to
be expected, cf. Remark \ref{localhidee}.

Finally, we mention that the centered maximal operator $M^c$ associated to euclidean
balls satisfies $\left\|  M^c\right\|
_{\operatorname{Op}(\alpha)}=1$ in all dimensions, so Lipschitz and
H\"older constants are in general not reduced (cf. Remark
\ref{center}). Thus,  the preceding results, together with Theorem
2.5 of \cite{AlPe} mentioned above, suggest that from the viewpoint
of regularity the uncentered maximal operator is a more natural
object of study than the centered one.

\section
{Definitions and global  results for arbitrary norms.}

\begin{definition}\label{maxfun} Let $U\subset\mathbb{R}^d$ be an open set, let
$\|\cdot\|$ be a norm on $\mathbb{R}^d$, and let $B$ be a generic
ball with respect to this norm. Given  a locally integrable function
$f:U\to \Bbb R$, the noncentered Hardy-Littlewood maximal function
$Mf$  is defined by
$$
Mf(x) := \sup_{ x\in B\subset U}\frac{1}{|B|}\int_B |f(y)|dy.
$$
Here $|B|$ stands for the Lebesgue measure of $B$.
  Regarding the centered maximal function $M^c f(x)$, one
requires that  balls be centered at $x$ rather than just containing
it, but everything else is as in the uncentered case.
\end{definition}

\begin{definition}  The modulus of continuity of a function $f$ is
$$\omega\left(f,\delta\right)
:=\sup\left\{ \left|f(x)-f(y)\right|: \left\| x-y\right\|\leq\delta
\right\}.$$
\end{definition}

We point out that both definitions depend on the norm
under consideration (while the underlying Lebesgue
measure does not). Although  maximal functions and moduli of
continuity obtained via
different norms will always be pointwise comparable, from the viewpoint of best constants distinctions cannot be neglected. Indeed,
 when the
euclidean ($\ell_2$)  and the max ($\ell_\infty$) norms
are used in $\mathbb{R}^{d}, d > 1$ the results vary. Somewhat surprisingly,
since already in dimension 3 cubes and cross-polytopes are very different
geometrical objects, best constants associated
to the $\ell_\infty$ norm and arbitrary moduli of continuity
are exactly the same as those obtained under the $\ell_1$ norm.
Let us emphasize, though, that our results say nothing, for instance, about the
maximal function associated to cubes when the length used is the
euclidean distance. For us, fixing a norm fixes the balls
we average over; once we select, say, the euclidean norm, the maximal
function considered in our
theorems will be the one associated to euclidean balls.

The following theorem is due to Juha Kinnunen, cf. \cite{Ki}. While
not explicitly stated there, its proof appears in \cite{Ki}, pp.
120-121, Remark 2.2 (iii). A small variant of the argument yields
the boundedness in $W^{1,p}(\mathbb{R^d})$, $1 < p\le\infty$, of the
maximal operator (Remark 2.2 (i) of \cite{Ki});
 an abstract version can
be found in Theorem 1 of \cite{HaOn}, where it is applied to the
spherical maximal operator.

In the next theorem $\mathcal{M}$
denotes a generic Hardy-Littlewood maximal operator (it could be
centered or noncentered, associated to euclidean or to other balls).

\begin{theorem}\label{kin} {\bf (J. Kinnunen)}. Let  $f:\mathbb{R}^d\to \Bbb R$ be locally integrable. Then, for every
$\delta>0$,
\begin{equation}\label{trivbd}
\omega\left(\mathcal{M}f,\delta\right)\leq \omega\left(|f|,\delta
\right).
\end{equation}
\end{theorem}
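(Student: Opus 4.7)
The plan is to exploit the translation invariance of both the class of admissible balls and of Lebesgue measure, together with the pointwise bound $\bigl||f(z)| - |f(w)|\bigr| \le \omega(|f|, \|z - w\|)$.

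Fix $x, y \in \mathbb{R}^d$ with $\|x - y\| \le \delta$, and let $\varepsilon > 0$. By the definition of $\mathcal{M}f(x)$, I would choose a ball $B$ (containing $x$ in the uncentered case, or centered at $x$ in the centered case) such that
\[
\frac{1}{|B|}\int_B |f(z)|\,dz \;\ge\; \mathcal{M}f(x) - \varepsilon.
\]
Let $B' := B + (y - x)$ be the translate. Because the ball $B$ is admissible for the supremum defining $\mathcal{M}f(x)$ (it either contains $x$ or is centered at $x$), its translate $B'$ is likewise admissible for $\mathcal{M}f(y)$: it contains $y$, respectively, is centered at $y$. Moreover, $|B'| = |B|$.

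Next I would estimate the difference by a change of variables $w = z + (y - x)$:
\[
\mathcal{M}f(x) - \mathcal{M}f(y)
\;\le\; \varepsilon + \frac{1}{|B|}\int_B |f(z)|\,dz - \frac{1}{|B'|}\int_{B'} |f(w)|\,dw
\;=\; \varepsilon + \frac{1}{|B|}\int_B \bigl(|f(z)| - |f(z + y - x)|\bigr)\,dz.
\]
Since $\|z - (z + y - x)\| = \|x - y\| \le \delta$, the integrand is bounded in absolute value by $\omega(|f|, \delta)$, giving $\mathcal{M}f(x) - \mathcal{M}f(y) \le \varepsilon + \omega(|f|, \delta)$. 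Letting $\varepsilon \to 0$ and reversing the roles of $x$ and $y$ yields $|\mathcal{M}f(x) - \mathcal{M}f(y)| \le \omega(|f|, \delta)$, and taking the supremum over admissible pairs gives \eqref{trivbd}.

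There is no real obstacle here: the proof hinges only on the fact that the family of admissible balls (both for the centered and the uncentered operators) is preserved under translation, so translating a nearly-optimal ball at $x$ produces an admissible competitor at $y$. The only mild point requiring care is to note that the argument is valid regardless of whether $\mathcal{M}f(x)$ or $\mathcal{M}f(y)$ is finite or not: if both are infinite there is nothing to prove, and if one is infinite then the translation argument shows the other is too.
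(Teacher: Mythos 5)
Your proof is correct and is essentially the paper's argument in unpacked form: the paper phrases it abstractly via commutation of $\mathcal{M}$ with translations, subadditivity, and the fact that averages never exceed a supremum, while you make the same mechanism explicit by translating an $\varepsilon$-optimal ball from $x$ to an admissible competitor at $y$ and bounding the pointwise difference of the integrands by $\omega(|f|,\delta)$. No gap; the two proofs are the same translation-invariance argument.
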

\begin{proof}
Let $h, x\in\mathbb{R}^d$ and set $f_h(x):=f(x+h)$. By commutativity of $\mathcal{M}$ with
translations, $\mathcal{M}f(x+h)=\mathcal{M}f_h (x)$, and by subadditivity, $|\mathcal{M}f_h-\mathcal{M}f|\le
\mathcal{M}(|f_h|-|f|)$. Since averages never exceed a supremum,
we have $\sup_{x\in\mathbb{R}^d}
\mathcal{M}(|f_h|-|f|) (x) \le \sup_{x\in\mathbb{R}^d} (\left||f_h|-|f|\right|) (x)$,
and now (\ref{trivbd})
follows by
taking the sup  over $\|h\|\le \delta$.
\end{proof}

\begin{remark}\label{center} This simple result
already contains the sharp bound for the centered maximal operator
$M^c$ on the Lipschitz and H\"older classes. Let $\psi(x)=
\max\{1-|x|, 0\}$ be defined on $\mathbb{R}$;  clearly, $M^c \psi
(x) = \psi(x)$ for every $x\in [-1/2, 1/2]$, so both functions have the
same Lipschitz constant:
 $\operatorname{Lip} (M^c\psi) = \operatorname{Lip} (\psi)= 1$.   Now, this
example can be easily adapted to higher dimensions (for instance, by
making the corresponding function depend only on the first
coordinate), and it follows that the centered maximal operator on
$\mathbb{R}^d$ does not in general reduce the Lipschitz constant of
a function. By way of contrast, we mention that the uncentered
maximal operator on $\mathbb{R}$ satisfies $\operatorname{Lip}
(Mf) \le (\sqrt2 -1)\operatorname{Lip} (f)$, cf. Corollary
\ref{lip1} below, while on $\mathbb{R}^d$ with the Euclidean norm,
the bound $\operatorname{Lip} (Mf) \le 2^{ -1/2}\operatorname{Lip}
(f)$ holds uniformly in the dimension (Theorem \ref{2balls}).

As for H\"older functions, the preceding example can be easily
modified to yield the same conclusion. In one dimension, set $\eta
(x) := \max\{1-|x|^\alpha, 0\}$ when $x \le 0$,  set $\eta (x) :=
1+|x|^\alpha$ on $[0,1/4]$, and finally, extend $\eta$ to $[1/4,
\infty)$ by reflection about the $x=1/4$ axis. Then, by
concavity of $\eta$ on $0<x<1/4$,  $M^c \eta (x)
= \eta(x)$ for every  $x \in [0, 1/4]$, and thus both functions have
the same H\"older constant (we mention that the uncentered operator improves 
H\"older constants, see
formula (\ref{1d}) below). It is clear that this example can
also be
adapted to higher dimensions.
\end{remark}

 Our aim in this paper is to find best inequalities in the spirit of (\ref{trivbd})
 for the uncentered maximal operator, which, as noted in the introduction, has better properties regarding the regularization
 of functions than its centered relative.
\begin{definition} A function $\omega: [0,\infty )\to[0,\infty )$ is a modulus of continuity if
it is the modulus of continuity of a uniformly continuous function,
i.e., if there is a uniformly continuous function $f$ such that for
all $\delta \ge 0$, $\omega\left(\delta\right) =
\omega\left(f,\delta\right)$.
\end{definition}

\begin{remark} Often a modulus of continuity $\omega: [0,\infty )\to[0,\infty )$ is defined
as a continuous, nondecreasing, subadditive function (so $\omega (a +
b) \le \omega (a) +\omega (b)$), vanishing at zero. Note that from
continuity and subadditivity it follows that  $\omega$ is uniformly
continuous. It is well known and not difficult to check that the
modulus of continuity of a uniformly continuous function $f$ has
these properties, while given any $\omega$ satisfying the above
conditions, there is a uniformly continuous $f$ such that
$\omega(\cdot) = \omega (f,\cdot)$, namely $\omega$ itself. We do
not assume that moduli of continuity are bounded, so our results
apply to general Lipschitz and H\"older functions. To avoid trivialities,
given a generic modulus of continuity, we shall assume it is not identically
zero.
\end{remark}

The next theorem, and its corollary \ref{opnormequa}, contain the main integral formulas of the paper,
valid (in the global case) for all norms and all dimensions.  Note that
on  $\mathbb{R}^{d}$ the value of $Mf(x)$ is the same regardless of whether the
balls we average over are taken to be open or closed. We shall assume
whatever is more convenient at any given point. For instance, in  the next theorem and its proof we suppose that balls are closed. Additionally, we use the following notation: $a_n\uparrow b$ (resp. $a_n\downarrow b$) means that the sequence $\{a_n\}$ converges to $b$ in a monotone increasing (resp. decreasing) fashion.

\begin{theorem} \label{mainabst} Let $d \ge 1$ and let $f$ be a locally
integrable function on $\mathbb{R}^{d}$. Then, for every norm
$\|\cdot\|$ on $\mathbb{R}^{d}$ and every $t\geq0$,
\begin{equation}\label{abstform}
\omega\left(Mf,t\right)  \leq\\
\sup_{\left\{v\in\mathbb{R}^{d}:\;\left\|  v\right\|  = 1\right\}}
\inf_{\left\{c\in\mathbb{R}^{d},R>0 :\left\|  v-c\right\| \le
R\right\}}\dfrac{1}{\left|  B(0,1)\right|}
{\displaystyle\int_{B(0,1)}} \omega\left(  |f|,t\left\|
c+Ru\right\|  \right)  du.
\end{equation}
The preceding inequality is optimal in the  sense that given a modulus of continuity $\omega$, there exists a function $\psi$
such that letting $f = \psi$ in (\ref{abstform}), the following equalities hold: For all $t > 0$, we have $\omega\left(\psi,t\right)  = \omega\left(t\right)$,
and furthermore,
\begin{equation}\label{equality}
\omega\left(M\psi,t\right)  =
\sup_{\left\{v\in\mathbb{R}^{d}:\;\left\|  v\right\|  = 1\right\}}
\inf_{\left\{c\in\mathbb{R}^{d},R>0 :\left\|  v-c\right\| \le
R\right\}}\dfrac{1}{\left|  B(0,1)\right|}
{\displaystyle\int_{B(0,1)}} \omega\left( t\left\|
c+Ru\right\|  \right)  du.
\end{equation}
If  $v$ is a unit vector, to find its associated infimum in  (\ref{equality}) it is enough
to consider the set
\begin{equation}\label{infset2}
\left\{c\in\mathbb{R}^{d},R>0 :\left\|  v-c\right\| = R
\text{ and }R \le 1 \right\}.
\end{equation}
As for the extremal functions $\psi$ appearing in (\ref{equality}),
they have the following form: If $\omega$ is bounded, we can take
$\psi(x)=\|\omega\|_{L^\infty([0,\infty))}-\omega(\|x\|)$, while if
$\omega$ is unbounded, we set
$\psi(x)=\sum_{n=1}^\infty(\Omega_n-w(\|x-a_n\|))^+$, where
$\{\Omega_n\}_1^\infty\uparrow\infty$  and  $\{a_n\}_1^\infty$  diverges to infinity so
fast, that on the support of each spike the maximal function does not depend on any
of the other spikes.
\end{theorem}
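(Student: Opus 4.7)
The plan is to establish the inequality (1.2) by a \emph{reversed-matching} estimate on pairs of balls, then convert the resulting pairwise bound into the claimed infimum form via the flexibility of the pairing, and finally verify sharpness through the explicit extremal $\psi$.

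For the inequality, I would fix $x, y$ with $\|y-x\| = t$ (the case $\|y-x\| < t$ reduces to this) and set $v = (y-x)/t$. For arbitrary balls $B_y = B(c_y, R_y) \ni y$ and $B_x = B(c_x, R_x) \ni x$, I parametrize $z = c_y + R_y u \in B_y$ and $z' = c_x - R_x u \in B_x$ with $u \in B(0,1)$, using the symmetry $B(0,1) = -B(0,1)$. The distance identity
\[
\|z - z'\| = \|(c_y - c_x) + (R_y + R_x)u\| = t\,\|c^* + R^* u\|, \qquad c^* := (c_y - c_x)/t,\ R^* := (R_y + R_x)/t,
\]
combined with the triangle inequality $\|v - c^*\| \le R^*$ and the pointwise bound $||f(z)| - |f(z')|| \le \omega(|f|, \|z-z'\|)$, yields the pairwise estimate
\[
\frac{1}{|B_y|}\int_{B_y}|f| - \frac{1}{|B_x|}\int_{B_x}|f| \le \frac{1}{|B(0,1)|}\int_{B(0,1)} \omega\bigl(|f|, t\|c^* + R^* u\|\bigr)\,du.
\]

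Transforming this pairwise inequality into the stated infimum bound is, I expect, the main obstacle. My plan is to observe that every $(c, R)$ with $\|v - c\| \le R$ is realized by the balanced pair $B_y = B(y + \tfrac{tR}{2}w, \tfrac{tR}{2})$, $B_x = B(x - \tfrac{tR}{2}w, \tfrac{tR}{2})$ with $w := (c-v)/R$, $\|w\| \le 1$, and then to combine this with the identity $Mf(y) - Mf(x) = \sup_{B_y \ni y} \inf_{B_x \ni x} \bigl(\tfrac{1}{|B_y|}\int_{B_y}|f| - \tfrac{1}{|B_x|}\int_{B_x}|f|\bigr)$. The delicate point is that the outer supremum over $B_y$ may be taken along the sequence $R_y \to 0$ (so that $c_y \to y$), at which limit the realizable set of $(c^*, R^*)$ for each $B_y$ exhausts the full feasible set $\{\|v-c\|\le R\}$; this allows the infimum over $(c,R)$ to surface as the effective bound, giving (1.2) after taking the supremum over unit vectors $v = (y-x)/t$.

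For the optimality clause, in the bounded case I would set $\psi(z) = \|\omega\|_{L^\infty} - \omega(\|z\|)$. Monotonicity and subadditivity of $\omega$ (inherent to a modulus of continuity) give $\omega(\psi, \cdot) = \omega$, and symmetry of the norm makes $M\psi$ radial with maximum $M\psi(0) = \|\omega\|_{L^\infty}$ (attained by shrinking balls about $0$). For $y$ with $\|y\| = t$, the identity $M\psi(y) = \|\omega\|_{L^\infty} - \inf_{B \ni y}\tfrac{1}{|B|}\int_B \omega(\|z\|)\,dz$ together with the reparametrization $B = B(tc, tR)$ with $\|v-c\|\le R$ identifies this infimum with the right-hand side of (1.3); the equality in (1.3) then follows by taking $x = 0$, $y = tv$ and the supremum over unit $v$. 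For unbounded $\omega$, the spike construction $\psi(x) = \sum_n(\Omega_n - \omega(\|x-a_n\|))^+$ reduces the argument to the bounded case on each spike, provided the centers $a_n$ are sufficiently separated that the relevant suprema defining $M\psi$ on the support of each spike see only that spike. Finally, the reduction to the set (1.4) with $\|v-c\|=R$ and $R\le 1$ is a consequence of the monotonicity of $\omega$: shrinking $R$ while keeping $\|v-c\|=R$ yields a pointwise smaller integrand in $u$, and for $R>1$ one may replace $(c,R)$ by a pair with $R\le 1$ without enlarging the integrand.
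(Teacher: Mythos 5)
Your pairwise two-ball estimate is correct, and your treatment of the extremal functions is essentially the paper's. The genuine gap is in the passage from the pairwise bound to the infimum over the full feasible set $\{(c,R):\|v-c\|\le R\}$. You write $Mf(y)-Mf(x)=\sup_{B_y\ni y}\inf_{B_x\ni x}(\cdots)$ and need the inner infimum to range over all feasible $(c^*,R^*)$; but for a \emph{fixed} $B_y=B(c_y,R_y)$ the pairs $(c^*,R^*)=((c_y-c_x)/t,(R_y+R_x)/t)$ realizable as $B_x$ varies form only the proper subset $\left\{\|c^*-v-\epsilon\|\le R^*-R_y/t\right\}$ of the feasible set, where $\epsilon=(c_y-y)/t$; in particular $R^*\ge R_y/t$ is forced. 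The infimum over this smaller set can be strictly larger than the infimum over the full feasible set (take $R_y$ large and $\omega(|f|,\cdot)$ unbounded), so what you obtain is $\sup_{B_y}$ of a quantity that may exceed the claimed right-hand side. Your proposed remedy --- taking the supremum over $B_y$ along $R_y\to0$ --- is not available: the supremum defining $Mf(y)$ must range over \emph{all} balls containing $y$, and a near-optimal $B_y$ may have large radius; restricting to shrinking balls recovers only $|f(y)|$, not $Mf(y)$. Your ``balanced pair'' construction shows each feasible $(c,R)$ is realized by \emph{some} pair of balls, which is the wrong quantifier order for an upper bound on the sup--inf.

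The fix is to drop the balanced, antipodal pairing and instead, for each ball $B(a,S)$ containing the point where $Mf$ is larger, compare with the \emph{shifted and enlarged} balls $B(a+b,S+T)$ with $\|(y-x)-b\|\le T$: these contain the other point by the triangle inequality for \emph{every} admissible $(a,S)$, and the pairing $a+Su\leftrightarrow a+b+(S+T)u$ produces the integrand $\omega(|f|,\|b+Tu\|)$, which depends only on $(b,T)$; hence the supremum over $(a,S)$ disappears and the infimum over the full set of shifts $(b,T)$ survives. This is exactly the paper's argument. Two smaller inaccuracies in your last paragraph: in the reduction to (\ref{infset2}), ``shrinking $R$ while keeping $\|v-c\|=R$'' does not decrease the integrand pointwise (the center must move, and $\|c+Ru\|$ need not decrease); the correct step for passing from $\|v-c\|<R$ to equality is the radial rescaling $(c,R)\mapsto(r_0c,r_0R)$, which multiplies $\|c+Ru\|$ by $r_0\le1$. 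And the case $R>1$ is handled by a mass-comparison of $B(c,R)$ with the admissible ball $B(0,1)$, not by a pointwise bound on the integrand.
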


Observe that (\ref{abstform}) is stronger, for the uncentered maximal
operator, than Kinnunen's inequality $\omega\left(  Mf,t\right)
\leq\omega\left( |f|,t\right)$, as can be seen by letting
$R\rightarrow0$ (and hence $c\rightarrow v$). Note also that the
right hand side of (\ref{equality}) depends only on the modulus of
continuity and
on the norm. Regarding the extremal functions,  our strategy
to find them is easy to explain (and the proof of the theorem shows that
it works): Suppose $\omega$ is a bounded modulus of continuity,
and suppose that a given function $g$ has a global maximum at $x$.
Then $Mg (x) = g (x)$, and in order to maximize
$Mg (x) - M g (y)$ for each $y\ne x$, we want to minimize $Mg (y)$.
Thus $g$
should have the fastest
possible decay allowed
by $\omega$ in every direction (and hence the least possible mass).
 This is precisely what  $\psi(x):= \|\omega\|_{L^\infty([0,\infty))}-\omega\left(  \left\|  x\right\| \right)$ does.
 A similar observation (in a local sense) can be made for unbounded moduli.  Finally, we mention that while it is natural to suspect that  the centers $c$ in
(\ref{infset2}) associated to $v$ should be chosen so that $B(c,R) \subset B(0,1)$,
 Theorem \ref{mainabst} does not make any such assertion, and in fact, it
 may be
false in general. But
we shall see in the next section that it is indeed true for the
$\ell_1, \ell_2$ and $\ell_\infty$ norms.

\begin{proof}  Given $f \ge 0$, we assume that the associated modulus of continuity
$\omega\left(  f,t\right)$ is finite for every $t>0$, for otherwise
there is nothing to prove. Hence, we take $f$ to be locally bounded.
We may also assume that $Mf$ is not constant, and in particular,
that it is not identically $\infty$. Next, choose
$x,y\in\mathbb{R^d}$ and suppose that $Mf(y)<Mf(x)$. Set
$$
E:=\{b\in\mathbb{R}^d,T>0: \|(y-x)-b\|\le T \} \mbox{ \ \ and \ \ }
F:=\{a\in\mathbb{R}^d,S>0: \|x-a\|\le S \}.
$$
 Then, for all $(a,S)
\in F$ we have $  E\subset\{b\in\mathbb{R}^d,T>0: y\in B(a+b,S+T)\}$
by the triangle inequality. Hence
\begin{equation*}
  Mf(x)-Mf(y)= \sup_{F}\frac{1}{|B(a,S)|}\int_{B(a,S)}f(u)du -Mf(y)
\end{equation*}
\begin{equation*}
  \le \sup_F\inf_E\left(\frac{1}{|B(a,S)|}\int_{B(a,S)}f(u)du
-\frac{1}{|B(a+b,S+T)|}\int_{B(a+b,S+T)}f(u)du\right)
\end{equation*}
\begin{equation*}
\le \sup_F\inf_E
\left(\frac{1}{|B(0,1)|}\int_{B(0,1)}|f(a+Su)-f(a+b+(S+T)u)|du\right)
\end{equation*}
\begin{equation*}
\le \sup_F\inf_E
\frac{1}{|B(0,1)|}\int_{B(0,1)}\omega(f,\|b+Tu\|)du\end{equation*}
\begin{equation*}
=\inf_E \frac{1}{|B(0,1)|}\int_{B(0,1)}\omega(f,\|b+Tu\|)du,
\end{equation*}
where the $\sup_F$  has been deleted from the last line since neither  $a$ nor $S$
appear in the integral. By symmetry of $B(0,1)$, the inequality
\begin{equation}\label{dif}
  | Mf(x)-Mf(y)|\le \inf_E
\frac{1}{|B(0,1)|}\int_{B(0,1)}\omega(f,\|b+Tu\|)du
\end{equation}
also holds  when $Mf(y)>Mf(x)$, and thus it always holds. Writing in
(\ref{dif})
$c\|y-x\|=b$, $R\|y-x\|=T$, and $y-x=\|y-x\|v$, where $\|v\|=1$,
  we have
\begin{equation*}
  |Mf(x)-Mf(y)|\le
\inf_{\{c\in\mathbb{R}^d,R>0:
\|v-c\|\le R \}}
  \frac{1}{|B(0,1)|}\int_{B(0,1)}\omega(f,\|y-x\|\|c+Ru\|)du.
\end{equation*}
Since the right hand side of the last inequality is increasing  in
$\|y-x\|$, it follows that for every $t>0$,
\begin{equation*}
  \omega(Mf,t)=\sup_{\{x,y\in\mathbb{R}^d:\|x-y\|\le
  t\}}|Mf(x)-Mf(y)|
\end{equation*}
\begin{equation}\label{supinf}
\le \sup_{\{v\in\mathbb{R}^d:\|v\|=1\}}\inf_{\{c\in\mathbb{R}^d,R>0:
\|v-c\|\le R \}}
  \frac{1}{|B(0,1)|}\int_{B(0,1)}\omega(f,t\|c+Ru\|)du.
\end{equation}

Next we prove that this inequality is sharp.
Suppose first that $\omega$ is a bounded modulus of continuity. Set $\Omega := \|\omega\|_{L^\infty([0,\infty))}$, and write
 $\psi(x):= \Omega-\omega\left(  \left\|  x\right\| \right)$. Fix $t > 0$. If $x$ satisfies $0
<\left\|x\right\| \le t$, then we can express $x=tv$, where $0<\|v\|\le
1$, and we get
\begin{gather*}
\omega(M \psi, t) \ge   M\psi(0)-M\psi(x) =\inf_{\left\{
c\in\mathbb{R}^{d},R>0 : \left\|  v-c\right\| \le R\right\}}
\dfrac{1}{\left|  B(0,1)\right|} {\displaystyle\int_{B(0,1)}}
\omega\left(  t\left\|  c+Ru\right\|  \right)  du.
\end{gather*}
Now the result follows  by taking the supremum over all $v$
such that $0<\|v\|\le
1$.
If $\omega$ is unbounded,
 we modify $\psi$ it as follows: Take
 a sequence of suitably chosen ``spike" functions
$\left(\Omega_n-\omega\left(  \left\|  x- a_n\right\| \right) \right)^+$, and then set
$\psi(x)=\sum_{n=1}^\infty(\Omega_n-w(\|x-a_n\|))^+$.
``Suitably chosen" in the preceding sentence means that the
 heights $\Omega_n$ tend to infinity and the different
spikes are placed so far apart (i.e.,  $a_n\to\infty$ so fast) that on the support of each spike, the
others need not be taken into account when computing $M \psi (x)$
(in particular, different spikes will have disjoint
supports, so there are no convergence issues with the series
defining $\psi$).

In order to  apply formula (\ref{supinf}), it is useful to narrow
down as much as possible where  the infimum occurs. First we show
that it is enough to consider $R\le 1$. Fix a unit vector $v$ and suppose $R >1$. Since $\|v - 0\| = 1 \le R$, the origin is an admissible center
$c$ associated to $v$.
But then averaging $\omega(f, t\|\cdot\|)$ over
$B(0,1)$ yields a  value no larger than averaging over any
other ball $B(c, R)$ containing $v$, for  every
vector in $B(c,R)\setminus B(0,1 )$ has  norm larger than one, hence
larger than the norm of
any vector in $B(0, 1)\setminus B(c,R )$, and additionally
$|B(c,R)\setminus B(0,1 )| > |B(0,1)\setminus B(c,R )|$.

 Next we prove that it is enough to consider
pairs $(c, R)$ for which $\|v-c\|= R$.
Suppose $\|v-c\|< R$. Since
the continuous function
$r\mapsto rR-\|v-rc\|$ changes sign on $[0,1]$,
there exists an $r_0\in [0, 1]$ such that
$\|v-r_0 c\|=r_0 R$ and, for such an $r_0$, we have $\omega(f, t\|r_0 c+r_0 Ru\|)\le
\omega(f,t \|c+Ru\|)$. Hence, we do no worse by using the pair
$(r_0 c, r_0 R)$ instead of $(c, R)$, and
 (\ref{infset2}) follows.
\end{proof}

\begin{remark} \label{anderson}
 We mention that to obtain
  (\ref{infset2}), instead of the
  selfcontained argument given above, we could have applied
Anderson's Theorem (cf., for instance, Theorem 1.11,
pg. 376 of \cite{Ga}, or Theorem 1 of \cite{An}), which for  nonnegative, integrable, symmetric and
unimodal functions $f$, and origin symmetric convex bodies
$K\subset\mathbb{R}^d$,
tells us that
$$ {\displaystyle\int_{K}}
f\left( x + cy  \right)  dx \ge {\displaystyle\int_{K}}
f\left( x + y  \right)  dx,$$
where $0 \le c \le 1$ and $y\in\mathbb{R}^d$. Since we shall use
Anderson's Theorem later on, it is stated here for easy reference.
\end{remark}

\begin{definition} \label{lipopnorm} Given a modulus of continuity $\omega$,
 define the Lipschitz
space $\operatorname{Lip}(\omega) = \operatorname{Lip}(\omega, X)$ via the seminorm
\[
\left\|  f\right\|  _{\operatorname{Lip}(\omega, X)}:=\sup_{\{x, y \in X :x\neq y\}}
\frac{\left| f(x)-f(y)\right|  }{\omega\left(  \left\| x-y\right\|
\right) } =\sup_{ t > 0}
\frac{\omega ( f, t)}{\omega\left(t\right) }.
\]
Then $f\in \operatorname{Lip}(\omega, X)$ if (and only if)
$\left\|  f\right\|  _{\operatorname{Lip}(\omega, X)} < \infty.$
In this section and the next we will always have $X = \mathbb{R}^d$,
so reference to $X$ shall usually be omitted. Next,  set
\begin{equation}\label{opnorm}
\left\|  M\right\|  _{\operatorname{Op}(\omega)}
:=\sup_{\left\|
f\right\| _{\operatorname{Lip}(\omega)}\ne 0}\frac{\left\|  Mf\right\|
_{\operatorname{Lip}(\omega )}}{\left\|
f\right\| _{\operatorname{Lip}(\omega)}}
 =\sup_{\left\|
f\right\| _{\operatorname{Lip}(\omega)}=1}\left\|  Mf\right\|
_{\operatorname{Lip}(\omega )}.
\end{equation}

When $\omega\left(  t\right)  =t^{\alpha}$ and $0<\alpha \le 1$, we
use $\operatorname{Lip_\alpha}(X)$ (or just $\operatorname{Lip}
(\alpha)$) to denote the corresponding spaces of H\"{o}lder
continuous functions and of
 Lipschitz functions on $X$,  $\operatorname{Lip}_\alpha (f)$ to denote $\left\|  f\right\|_{\operatorname{Lip}(\omega)}$, and
 $\left\|  M\right\|_{\operatorname{Op}(\alpha)}$ to denote
 $\left\|  M\right\|  _{\operatorname{Op}(\omega)}$. If $\alpha = 1$, we often omit it, simply writing
 $\operatorname{Lip} (X)$ and $\operatorname{Lip} (f)$.
\end{definition}

While the notation does not make it explicit, since we are considering
functions defined on $(\mathbb{R}^d, \|\cdot\|)$,
$\left\|  M\right\|  _{\operatorname{Op}(\omega)}$ depends
both on $d$ and on  $\|\cdot\|$. But dimension and
norm
will always be clear from context.

\begin{remark}\label{univbds} Kinnunen's Theorem (\ref{kin}) shows that  $\left\|  \mathcal{M}\right\|_{\operatorname{Op}(\omega)} \le 1$  for all sorts of maximal operators $\mathcal{M}$, since for every
$t>0$ and every $f$ with $\left\|  f\right\|  _{\operatorname{Lip}(\omega, X)} \le 1$, we have
$
\omega\left(\mathcal{M}f, t\right)\leq \omega\left(|f|, t
\right) \leq \omega\left( f, t
\right) \le \omega (t)$.
\end{remark}

\begin{example} Note that for $f\in \operatorname{Lip}(\omega)$,
its norm
$\left\|  f\right\|_{\operatorname{Lip}(\omega)}$ depends not only on
the space $\operatorname{Lip}(\omega)$, but also on the modulus
$\omega$ used to define it (we write norm for short, even though we
mean seminorm). Consider, for instance, the
functions on $\mathbb{R}$ that are both bounded and Lipschitz. As a set, this space can be defined via many moduli, for example
$\omega (t) := \min\{t,1\}$ and $\omega^\prime (t) := \min\{t,1/2\}$.
Then $\psi (x) = (1 - |x|)^+ = \max\{0, 1 - |x|\}$ has norm
1 in $\operatorname{Lip}(\omega)$ and norm 2 in $\operatorname{Lip}(\omega^\prime)$.
We also mention that since $ \lim_{t \to\infty}
\omega\left(\mathcal{M}\psi, t\right) = 1$, we have $\left\|  M\right\|_{\operatorname{Op}(\omega)} = 1$, so the upper bound
1 can be attained by the uncentered maximal operator. But we shall
see that for many standard moduli strict inequality holds.
\end{example}

\begin{remark}\label{univbds}  Identifying some extremal
functions, as we do in Theorem \ref{mainabst}, allows us to immediately improve the general bound 1 in the
uncentered case, on Lipschitz functions, for every dimension $d$,
and all norms.
\end{remark}

\begin{corollary}\label{univLipbounds} Fix $d\ge 1$.
Given any norm $\|\cdot\|$ on $\mathbb{R}^d$, the
associated maximal operator acting on Lipschitz functions
satisfies
$\|M\|_{\operatorname{Op}(1)}\le d/(d+1).$
\end{corollary}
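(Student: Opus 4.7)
The plan is to deduce this directly from Theorem \ref{mainabst} by making a particularly simple choice of $(c,R)$ in the infimum, and then evaluating a single explicit integral.

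Let $f\in\operatorname{Lip}(1)$ with $L:=\left\|f\right\|_{\operatorname{Lip}(1)}$, so that $\omega(|f|,s)\le\omega(f,s)\le Ls$ for every $s\ge 0$. Substituting this bound into \eqref{abstform} gives, for every $t>0$,
\[
\omega(Mf,t)\;\le\;L\,t\,\sup_{\|v\|=1}\inf_{\|v-c\|\le R}\frac{1}{|B(0,1)|}\int_{B(0,1)}\|c+Ru\|\,du.
\]
Now for any unit vector $v$ the pair $(c,R)=(0,1)$ is admissible, since $\|v-0\|=1=R$. So I would first use this specific admissible pair to bound the infimum, uniformly in $v$, by
\[
\frac{1}{|B(0,1)|}\int_{B(0,1)}\|u\|\,du,
\]
which is a quantity independent of $v$, and then take the supremum.

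The remaining step is to evaluate this single integral. I would use the layer cake formula together with the homogeneity of the norm: for each $r\in[0,1]$, the set $\{u\in B(0,1):\|u\|>r\}$ is exactly $B(0,1)\setminus B(0,r)$, and $|B(0,r)|=r^{d}|B(0,1)|$ by the change of variables $u\mapsto ru$. Hence
\[
\int_{B(0,1)}\|u\|\,du=\int_{0}^{1}\bigl(|B(0,1)|-r^{d}|B(0,1)|\bigr)\,dr=|B(0,1)|\cdot\frac{d}{d+1}.
\]
Dividing by $|B(0,1)|$ yields the value $d/(d+1)$, so $\omega(Mf,t)\le L\,\frac{d}{d+1}\,t=\frac{d}{d+1}\left\|f\right\|_{\operatorname{Lip}(1)}\omega(t)$. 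Taking the supremum over $t>0$ gives $\left\|Mf\right\|_{\operatorname{Lip}(1)}\le\frac{d}{d+1}\left\|f\right\|_{\operatorname{Lip}(1)}$, and the corollary follows by \eqref{opnorm}.

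There is no real obstacle here: the only nontrivial input beyond Theorem \ref{mainabst} is the classical layer cake identity, which works for any norm because the only property used is $|B(0,r)|=r^{d}|B(0,1)|$. Note that this argument does not attempt to optimize the choice of $(c,R)$ for each $v$, which is why the resulting estimate $d/(d+1)$ is not expected to be sharp for all norms; the sharper constants quoted in the introduction for specific $\ell_p$ norms will require exploiting more carefully the shape of the corresponding unit ball.
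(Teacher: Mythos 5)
Your proof is correct and is essentially the paper's argument: both come down to choosing the unit ball $B(0,1)$ itself as the admissible averaging ball for any unit vector $v$ and computing that the average of $\|u\|$ over $B(0,1)$ is $d/(d+1)$ (equivalently, that the average of the cone $1-\|u\|$ is $1/(d+1)$), the only cosmetic difference being that you run this through the general upper bound \eqref{abstform} for arbitrary Lipschitz $f$, whereas the paper evaluates $M\psi$ on the extremal function $\psi(x)=\max\{1-\|x\|,0\}$ supplied by Theorem \ref{mainabst}.
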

\begin{proof} Let $\psi (x) =\max\{1 -
\|x\|, 0\}$ (since we are in the Lipschitz case, it is immaterial
which height $\Omega$ we select in Theorem \ref{mainabst}, so we just pick $\Omega = 1$).
Given a vector $v$ with $\|v\|=1$, we estimate $M\psi (v)$
by integrating over the support of $\psi$, that is, over the unit
ball according to $\|\cdot\|$. This gives a
 lower bound for $M\psi $ on the unit sphere, and hence an upper bound for $\|M\|_{\operatorname{Op}(1)}$. Since for  $d \ge 1$ the average of the cone over the unit ball is $1/(d+1)$, we have
$\|M\|_{\operatorname{Op}(1)}\le 1 - 1/(d+1).$
\end{proof}

Of course, the very general but otherwise rather crude estimates given by the preceding corollary
cannot be expected to be sharp. In dimension one there is essentially one norm, and the best constant is  $\sqrt2 -1$, as will be seen
below, rather than  $1/2$. We shall show that in dimensions two and three
the constants $2/3$ and $3/4$ can be improved when dealing with the
$\ell_1$ and $\ell_\infty$ norms.

Let $f$ be a smooth, compactly supported
function
on $\mathbb{R}^d$, and recall that $\operatorname{Lip}(Mf) = \|DMf\|_\infty$. Since by the previous corollary $\|DMf\|_\infty\le
d/(d+1) \|Df\|_\infty$, it is
natural to suspect that if $p_d$ is high enough, for every $p\ge p_d$
one can find a $c_p\in (0,1)$ such that $\|DMf\|_p\le c_p \|Df\|_p$ (with $c_p$ independent of $f$, see also
question 2 below). But in this paper we study the size
of $\|DMf\|_p$ only when
 $p=\infty$.

Note also that asymptotically the above corollary does not improve  the general upper bound  $1$. Thus, it is natural to enquire whether bounds
strictly less than 1 and independent of the dimension
can be obtained,
 by a more careful choice of averaging
ball. It turns out that  for cubes (with sides parallel to the axes, that is, $\ell_\infty$ balls) and
cross-polytopes ($\ell_1$ balls) the constant $1$ is the correct
asymptotic value, and the trivial upper bound $d/(d+1)$ from the previous result is ``essentially" optimal (actually, we shall see later that the lower bound $(d-1)/(d+1)$ is a better asymptotic estimate).
However, substantial improvement is possible for Euclidean ($\ell_2$) balls, and thus, the same question  on the $p$ norm of the derivative arises: Can we have  $c_p < 1$ for $p <\infty$ sufficiently  high?
And can we take $p$ to be independent of the dimension?

We conclude this section by using the notation from Definition \ref{lipopnorm} to summarize the main contents of Theorem
\ref{mainabst}. While formula (\ref{equalityopnorm}) below does not look very promising, due to the successive appearance of two suprema and one infimum, the fact is that it will allow us to obtain optimal asymptotic
estimates for the norms considered in the next section, and in
some cases we will be able  to actually {\em compute} the number $\left\|  M\right\|_{\operatorname{Op}(\omega)}$.

\begin{corollary}\label{opnormequa}
Let  $\|\cdot\|$ be a norm on $\mathbb{R}^d$, and let $\omega$
be a  modulus
 of continuity. Then the
associated maximal operator $M$
acting on $\operatorname{Lip}(\omega)$ has  norm given by
\begin{equation}\label{equalityopnorm}
\left\|  M\right\|_{\operatorname{Op}(\omega)}
  =
\sup_{t>0}\sup_{\left\{v\in\mathbb{R}^{d}:\;\left\|  v\right\|  = 1\right\}}
\inf_{\left\{c\in\mathbb{R}^{d}, 0 < R\le 1 :\left\|  v-c\right\| =
R\right\}}\frac{1}{\omega(t)}
{\displaystyle\int_{B(0,1)}} \omega\left( t\left\|
c+Ru\right\|  \right) \frac{du}{\left|  B(0,1)\right|}.
\end{equation}
\end{corollary}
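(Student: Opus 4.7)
The plan is to show both inequalities in (\ref{equalityopnorm}) by invoking Theorem \ref{mainabst}, once for an abstract upper bound and once via a specific extremal function. By Definition \ref{lipopnorm},
\[
\|M\|_{\operatorname{Op}(\omega)} = \sup_{\|f\|_{\operatorname{Lip}(\omega)}=1}\;\sup_{t>0} \frac{\omega(Mf,t)}{\omega(t)},
\]
so it suffices to squeeze this quantity between the right-hand side of (\ref{equalityopnorm}) from above and from below.

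For the $\le$ direction I would fix an $f$ with $\|f\|_{\operatorname{Lip}(\omega)}\le 1$ and use the pointwise dominations $\omega(|f|,s)\le\omega(f,s)\le\omega(s)$, the first of which follows from $\bigl|\,|f(x)|-|f(y)|\,\bigr|\le|f(x)-f(y)|$. Substituting into (\ref{abstform}) replaces the integrand $\omega(|f|,t\|c+Ru\|)$ by $\omega(t\|c+Ru\|)$; the reduction of the admissible $(c,R)$ set to (\ref{infset2}) then carries over from the proof of Theorem \ref{mainabst} verbatim, because the geometric arguments given there (the $R>1$ symmetrization comparison against the ball $B(0,1)$, and the intermediate-value step involving $r\mapsto rR-\|v-rc\|$) do not use the identity of the integrand beyond its radial monotonicity in the variable $\|c+Ru\|$. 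Dividing by $\omega(t)$ and taking the two remaining suprema then yields the $\le$ half of (\ref{equalityopnorm}).

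For the matching lower bound I would test with the extremal function $\psi$ constructed in Theorem \ref{mainabst}. That theorem asserts $\omega(\psi,t)=\omega(t)$ for every $t>0$, hence $\|\psi\|_{\operatorname{Lip}(\omega)}=1$; simultaneously (\ref{equality}) evaluates $\omega(M\psi,t)$ as precisely the supremum--infimum integral appearing on the right-hand side of (\ref{equalityopnorm}). Taking the ratio and the supremum over $t$ therefore exhibits a single admissible $f=\psi$ whose Lipschitz--$\omega$ norm equals the right-hand side, so the operator norm attains that value.

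I do not anticipate a serious obstacle: the corollary is essentially a repackaging of Theorem \ref{mainabst} into operator-norm language, and the only step that one might worry about, namely that the restriction of the admissible $(c,R)$ domain to the set (\ref{infset2}) is valid for general $f$ and not merely for the extremal $\psi$, is already implicit in the purely geometric reduction carried out inside the earlier proof.
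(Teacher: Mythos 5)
Your argument is correct and is essentially the paper's own proof: both halves rest on Theorem \ref{mainabst}, using (\ref{abstform}) together with $\omega(|f|,s)\le\|f\|_{\operatorname{Lip}(\omega)}\,\omega(s)$ for the upper bound and the extremal function $\psi$ with $\omega(\psi,t)=\omega(t)$ together with (\ref{equality}) and (\ref{infset2}) for the matching lower bound. (A minor simplification: in the $\le$ direction the reduction to the set (\ref{infset2}) is automatic, since shrinking the domain of the infimum can only increase the right-hand side; the reduction is genuinely needed only when evaluating $\omega(M\psi,t)$.)
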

\begin{proof} Using one of the extremal functions $\psi$  found
in Theorem \ref{mainabst} we have
$\left\|  M\right\|_{\operatorname{Op}(\omega)} = \left\|  M \psi\right\|_{\operatorname{Lip}(\omega)}$, and now the result follows
from (\ref{equality}) together with (\ref{infset2}).
\end{proof}

\section
{Bounds in the $\ell_\infty$, $\ell_2$ and $\ell_1$ cases.}

Next we specialize Theorem \ref{mainabst} and Corollary
\ref{opnormequa} to the three norms in the title of this section.
This specialization will allow us to find explicit constants,
at least for low dimensions and in the case of cubes (where the needed
arguments seem to be simpler). As we mentioned, however, all
constants will turn out to be exactly the same when working
with the $\ell_\infty$  and $\ell_1$ norms.

\begin{theorem}\label{cubes} Let $M$ be the uncentered maximal operator
associated to balls defined by the
$\ell_\infty$ norm on $\mathbb{R}^{d}$, i.e., to cubes with sides
parallel to the coordinate axes, and let $\omega$ be a modulus
of continuity. Then
\begin{equation}\label{dcubes}
\left\|  M\right\|  _{\operatorname{Op}(\omega)}= \sup_{t>0} \left\{
\inf_{0\leq s\leq1} \dfrac{1}{\omega\left( t\right)}
{\displaystyle\int_{[-s,1]^d}}\omega\left(  t\left\|
x\right\|_{\infty} \right)  \frac{dx}{\left(1+s\right)^{d}}\right\},
\end{equation}
or equivalently,
\begin{equation}\label{dcubesbis}
\left\|  M\right\|  _{\operatorname{Op}(\omega)}= \sup_{t>0} \left\{
\inf_{0\leq s\leq1} \dfrac{d}{(1+s)^d\omega\left( t\right)}\left[
{\displaystyle 2^d \int_0^s u^{d-1}}\omega\left(  t u\right)
du+\int_s^1(u+s)^{d-1}\omega(tu)du\right]\right\}.
\end{equation}
Moreover, if we choose the same modulus $\omega$ for every dimension $d$,
then $\left\| M\right\|  _{\operatorname{Op}(\omega)}$ is
nondecreasing in $d$,
 and
$\lim_{d\to\infty}\left\|  M\right\|
_{\operatorname{Op}(\omega)}=1$.
\end{theorem}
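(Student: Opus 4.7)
To obtain (\ref{dcubes}), I apply Corollary~\ref{opnormequa} with $\|\cdot\|=\|\cdot\|_\infty$, so that $B(0,1)=[-1,1]^d$, and after the change of variables $y=c+Ru$ rewrite the inner integral as the average of $\omega(t\|y\|_\infty)$ over the admissible cube $B(c,R)=\prod_{i=1}^d[c_i-R,c_i+R]$. Because the integrand is invariant under each reflection $y_i\mapsto -y_i$ and under all coordinate permutations, I may restrict the outer supremum to unit vectors with $v_1=1\geq v_2\geq\cdots\geq v_d\geq 0$. The main obstacle is to show that this supremum is attained at the vertex $v=(1,\ldots,1)$; for any $v$ with some $v_i<1$, the admissible interval $c_i\in[v_i-R,v_i+R]$ reaches closer to $0$ than the interval $[1-R,1+R]$ forced on the vertex, and by a coordinatewise stochastic-dominance argument (or equivalently by Anderson's theorem, cf.\ Remark~\ref{anderson}, applied to the symmetric unimodal function $\omega(t)-\omega(t\|y\|_\infty)$) pushing $|c_i|$ toward $0$ can only decrease the integral of $\omega(t\|y\|_\infty)$, so replacing each $v_i<1$ by $1$ weakly enlarges the inner infimum. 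For $v=(1,\ldots,1)$ the same monotonicity in $|c_i|$ forces $c=(1-R,\ldots,1-R)$ and $B(c,R)=[1-2R,1]^d$.

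The final reduction is to restrict $R\in(0,1]$ to $[1/2,1]$. For $R\leq 1/2$ the cube lies in the positive orthant, so the substitution $y_i=1-2R(1-z_i)$, $z_i\in[0,1]$, yields $\|y\|_\infty=1-2R(1-\max_i z_i)$ and rewrites the average as $d\int_0^1\omega(t(1-2R\eta))(1-\eta)^{d-1}\,d\eta$, which is manifestly non-increasing in $R$; hence the infimum on $(0,1]$ is attained in $[1/2,1]$. Setting $s=2R-1\in[0,1]$ turns $[1-2R,1]^d$ into $[-s,1]^d$ and gives (\ref{dcubes}). The equivalent form (\ref{dcubesbis}) follows by computing the density of $\|y\|_\infty=\max_i|y_i|$ for $y$ uniform on $[-s,1]^d$: each $|y_i|$ has CDF $2u/(1+s)$ on $[0,s]$ and $(u+s)/(1+s)$ on $[s,1]$, so $\max_i|y_i|$ has density $d\cdot 2^d u^{d-1}/(1+s)^d$ and $d(u+s)^{d-1}/(1+s)^d$ on those intervals respectively, and integrating $\omega(tu)$ against this density reproduces (\ref{dcubesbis}).

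Monotonicity in $d$ is handled by an extension trick: given $f\in\operatorname{Lip}(\omega,\mathbb{R}^d)$, set $\tilde f(x_1,\ldots,x_{d+1}):=f(x_1,\ldots,x_d)$ on $\mathbb{R}^{d+1}$. Every $\ell_\infty$ cube in $\mathbb{R}^{d+1}$ is the Cartesian product of an $\ell_\infty$ cube in $\mathbb{R}^d$ with an interval of the same side-length, and since $\tilde f$ is independent of the last variable the $(d+1)$-dimensional cube-average of $\tilde f$ coincides with the $d$-dimensional cube-average of $f$. Thus $M^{(d+1)}\tilde f(x_1,\ldots,x_{d+1})=M^{(d)}f(x_1,\ldots,x_d)$, and the $\ell_\infty$-moduli of $\tilde f$ and $M^{(d+1)}\tilde f$ coincide with those of $f$ and $M^{(d)}f$ respectively, so the operator-norm ratio is preserved; taking the supremum over $f$ gives $\|M\|_{\operatorname{Op}(\omega)}$ in dimension $d+1$ at least its value in dimension $d$.

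Kinnunen's theorem (Remark~\ref{univbds}) gives $\|M\|_{\operatorname{Op}(\omega)}\leq 1$ in every dimension, so the asymptotic claim reduces to the matching lower bound. Fix $t>0$. For $y$ uniform on $[-s,1]^d$ with $s\in[0,1]$, the estimate $P(|y_i|\leq 1-\varepsilon)\leq 1-\varepsilon/(1+s)\leq 1-\varepsilon/2$ yields $P(\|y\|_\infty\leq 1-\varepsilon)\leq(1-\varepsilon/2)^d\to 0$ as $d\to\infty$, uniformly in $s$. Since $\omega$ is continuous and $\omega(t\|y\|_\infty)\leq\omega(t)$ on $[-s,1]^d$, dominated convergence gives $(1+s)^{-d}\int_{[-s,1]^d}\omega(t\|y\|_\infty)\,dy\to\omega(t)$ uniformly in $s\in[0,1]$, so the infimum in (\ref{dcubes}) divided by $\omega(t)$ tends to $1$. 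Since $\|M\|_{\operatorname{Op}(\omega)}$ in dimension $d$ dominates this infimum for every $t$, $\liminf_{d\to\infty}\|M\|_{\operatorname{Op}(\omega)}\geq 1$, completing the proof.
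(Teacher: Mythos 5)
Your proposal is correct, and for the two integral formulas and the limit $d\to\infty$ it follows essentially the same route as the paper: symmetrization plus coordinatewise monotonicity of $\int_{B(c,R)}\omega(t\|y\|_\infty)\,dy$ in each $|c_i|$ to land on $v_0=(1,\ldots,1)$ and $c=(1-R)v_0$, then the observation that for $R\le 1/2$ the average is non-increasing in $R$, and finally concentration of the uniform measure on $[-s,1]^d$ near $\{\|y\|_\infty=1\}$, uniformly in $s$. Two remarks on details: your derivation of (\ref{dcubesbis}) via the distribution function of $\max_i|y_i|$ is a tidier packaging of the paper's Fubini decomposition into the regions $\{|x_i|=\|x\|_\infty\}$; and your parenthetical appeal to Anderson's theorem is slightly off as stated, since the version in Remark \ref{anderson} contracts the center along a ray through the origin rather than one coordinate at a time --- but the coordinatewise one-dimensional rearrangement argument you name first (and which the paper itself uses as ``parallel transport'') is exactly what is needed, so this is a cosmetic point. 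The genuinely different ingredient is your proof of monotonicity in $d$: the paper compares the minimizers $s_d$ and $s_{d+1}$ directly inside the integral formula, whereas you lift an extremal $f$ on $\mathbb{R}^d$ to the cylinder function $\tilde f(x,x_{d+1})=f(x)$, use the product structure of $\ell_\infty$ balls to get $M^{(d+1)}\tilde f=\widetilde{M^{(d)}f}$ and equality of all the relevant $\operatorname{Lip}(\omega)$ seminorms, and conclude $\|M\|_{\operatorname{Op}(\omega)}^{(d+1)}\ge\|M\|_{\operatorname{Op}(\omega)}^{(d)}$ without touching the formula. Your argument is more conceptual and would apply to any maximal operator compatible with products in this way; the paper's has the minor advantage of staying entirely within the already-established integral expression.
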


Of the two expressions for $\left\|  M\right\|  _{\operatorname{Op}(\omega)}$ given in the preceding
theorem, (\ref{dcubes}) is the one with  a clearer
 geometric content: It says that an optimal choice for
 $v$ is $(1,1, \dots,1)$, and the associated minimizing cube
 contains the origin and is contained in $[-1,1]^d$.
On the other hand, while harder to interpret, formula  (\ref{dcubesbis}) turns out to be
computationally much more convenient.

\begin{proof} From (\ref{equalityopnorm}) it follows that
\begin{equation}\label{mincube}
\left\|  M\right\|  _{\operatorname{Op}(\omega)}= \sup_{t>0}\sup_{\left\{  v\in\mathbb{R}^{d}
:\left\|  v\right\|_\infty  =1\right\}  } \inf_{\left\{
c\in\mathbb{R}^{d},R>0:\left\|  v-c\right\|_\infty
= R\le 1\right\}}\dfrac{1}{\omega\left( t\right)} {\displaystyle\int_{[-1,1]^d}}
\omega\left(  t\left\|  c+Ru\right\|_\infty  \right)  \dfrac{du}{2^d},
\end{equation}
so if we write $v_0:=(1,1,\ldots,1)$, then
\begin{equation}\label{mincubelo}
\left\|  M\right\|  _{\operatorname{Op}(\omega)}\ge \sup_{t>0} \inf_{\left\{
c\in\mathbb{R}^{d},R>0:\left\|  v_0-c\right\|_\infty
= R\le 1\right\}}\dfrac{1}{\omega\left( t\right)} {\displaystyle\int_{[-1,1]^d}}
\omega\left(  t\left\|  c+Ru\right\|_\infty  \right)  \dfrac{du}{2^d}.
\end{equation}
We claim that the infimum in (\ref{mincubelo}) is attained when
$c=(1-R,1-R,\ldots,1-R)$ for some $R \in (0,1]$ (which may vary with
 the dimension).  Let $c=(c_1,c_2,\ldots,c_d)$ satisfy
$\|v_0-c\|_\infty = R\le1$. Then $c_1,c_2,\ldots,c_d\ge 1-R$.
Suppose $c_1 > 1-R$ (else, do nothing and consider $c_2$ instead).
Translate $c$ to $c^1:=(1-R,c_2,\ldots,c_d)$, by moving it parallel
to $e_1$. Now, for every $x\in B(c,R)\setminus B(c^1,R)$,
$|x_1|>1$, while for every $x\in B(c^1,R)\setminus B(c,R)$,
$|x_1|\le 1$. Since parallel transport in the direction of $e_1$
does not change any of the other coordinates, the average value of
$\|\cdot\|_\infty$ is not increased. Then repeat this process with each
coordinate.

Next, note that  the infimum is attained when $1/2\le R\le1$.
This immediately follows from the fact that  for each
$u\in B(0,1)$, the function $f(R):=\|(1-R)v_0+Ru\|_\infty$ is
decreasing on $(0, 1/2]$, since
each coordinate function $|1-R+Ru_i| = 1-(1- u_i)R$ is decreasing there. Thus
\begin{equation}\label{mincubeloR}
\left\|  M\right\|  _{\operatorname{Op}(\omega)}\ge \sup_{t>0} \inf_{\left\{
1/2\le  R\le 1\right\}}\dfrac{1}{\omega\left( t\right)} {\displaystyle\int_{[-1,1]^d}}
\omega\left(  t\left\|  (1-R) v_0 +Ru\right\|_\infty  \right)  \dfrac{du}{2^d}.
\end{equation}
On the other hand, using (\ref{mincube}) and, for each
each unit vector $v$, taking
the infimum over a smaller set of associated centers $c$, we get
\begin{equation}\label{mincubehi}
\left\|  M\right\|  _{\operatorname{Op}(\omega)}\le \sup_{t>0}\sup_{\left\{  v\in\mathbb{R}^{d}
:\left\|  v\right\|_\infty  =1\right\}  } \inf_{\left\{
1/2\le  R\le 1\right\}}\dfrac{1}{\omega\left( t\right)} {\displaystyle\int_{[-1,1]^d}}
\omega\left(  t\left\|  (1-R) v +Ru\right\|_\infty  \right)  \dfrac{du}{2^d}.
\end{equation}
We show that the supremum over unit vectors is attained on $v_0$, so in fact the right hand sides of (\ref{mincubeloR}) and
(\ref{mincubehi}) are equal. From this, (\ref{dcubes}) follows by making the change of variable  $x= (1-R)v_0+Ru$, and relabeling $s=2R -1$.
Fix
$R\in [1/2, 1]$, and  let $v = (v_1,\ldots,v_d)$ satisfy $\|v\|_\infty=1$. By symmetry considerations we may assume
that $v_1,\ldots,v_d \ge 0$. Now we argue as before. If $v_1 =1$ do
nothing and move to $v_2$. Else, $v_1 < 1$, so shift $v$ to
$v^1:=(1,v_2,\ldots,v_d)$ by parallel transport in the direction of
$e_1$. Then for every $x\in B((1-R)v,R)\setminus B((1-R)v^1,R)$,
$(1-R)v_1-R<x_1<0$, so $|x_1|<|(1-R)v_1-R|$, while if $x\in
B((1-R)v^1,R)\setminus B((1-R)v,R)$, we have $|x_1|>(1-R)v_1+R$.
Since the average value of $|x_1|$ increases after the shift and the other
coordinates do not change, the average value of $\|\cdot\|_\infty$
increases. Then repeat this process with each
coordinate.

To obtain (\ref{dcubesbis}) from (\ref{dcubes}), break up the integral
appearing in (\ref{dcubes}) into the regions $\{\|x\|_\infty < s\}$
 and $\{\|x\|_\infty \ge s\}$, and then separate these
 into the sets where $|x_i|=\|x\|_\infty$, for
$i=1,\ldots,d$. Note, for instance, that if we are working over
$\{\|x\|_\infty \ge s\}\cap \{x_1=\|x\|_\infty\}$, then, for a fixed value of $x_1$, the coordinates
$x_2,\dots,x_d$ of the associated vertical section $E_{x_1}$ satisfy
$-s\le x_i\le x_1$, so $|E_{x_1}| = (s + x_1)^{d-1}$. A similar
remark can be made about the integral over $\{\|x\|_\infty < s\}$,
so
 applying Fubini's Theorem we obtain (\ref{dcubesbis}).

Next we show that the norm of $M$ does not decrease when the
dimension changes from $d$ to $d+1$ if we keep the same modulus
$\omega$. For notational simplicity, we suppose that $d=1$. The
argument for arbitrary $d$ is the same. Let $s_d\in [0,1]$ be the
minimizing value of $s$ in dimension $d$. Then
\begin{gather*}
\frac1{ (1+s_2)^{2}}\int_{[-s_2,1]^2}
 \omega( t\max\{  |  x|  ,|  y|\}) dxdy\ge
  \frac1{ (1+s_2)^{2}}\int_{-s_2}^{1}\int_{-s_2}^{1}
 \omega( t|  x| ) dxdy\\
= \frac1{ 1+s_2}\int_{-s_2}^{1}
 \omega( t|  x| ) dx \ge \frac1{1+s_1}\int_{-s_1}^{1} \omega( t|  x| ) dx.
\end{gather*}

To finish, we show that as the dimension $d\to\infty$, for a fixed
modulus $\omega$ we have $\|M\|_{\operatorname{Op}(\omega)}\to 1$.
This is simply a consequence of the fact that for every $s\in [0,1]$, in high dimensions the
measure of the cube $[-s,1]^d$ concentrates near the norm one
vectors. More precisely, let $X$ be a random vector, chosen
uniformly from $[-s,1]^d$. Then its coordinate functions $X_i$ are
independent random variables, uniformly distributed over $[-s, 1]$.
By slight abuse of notation we use $P$ to denote both uniform
probabilities on $ [-s,1]^d$ and on $ [-s,1]$. Fix $0<\varepsilon
<1$, set $t=1$, and choose $\delta \in (0,\varepsilon)$ so that
$\omega(1-\delta)/\omega(1) > 1 -\varepsilon$. Then, for any $0\le
s\le 1$ and all sufficiently high $d$, we have
$$P(\|X\|_\infty > 1-\delta) \ge P(\max\{X_1,\dots,X_d\} > 1-\delta)
= 1 - \Pi_{1}^d  P(X_i \le 1-\delta)
$$
$$
= 1 - \left(\frac{ 1+s-\delta}{1+s}\right)^d \ge 1 - \left(1 -
\frac{\delta}{2}\right)^d >1-\varepsilon.
$$
Thus, if the dimension $d$ is large enough (depending of $\omega$ and $\varepsilon$) it follows that
$$
1 \ge \|M\|_{\operatorname{Op(\omega)}} \ge \inf_{0\le s\le
1}\dfrac{1}{\omega\left( 1\right)}
{\displaystyle\int_{[-s,1]^d}}\omega\left(  \left\|
x\right\|_{\infty} \right) \frac{ dx}{\left(1+s\right)^{d}} \ge
(1-\varepsilon) \frac{\omega \left(  1-\delta\right)}{\omega(1)} >
(1-\varepsilon)^2.
$$
\end{proof}

 Observe that in the above proof
we did not need to establish how the optimal value $s_d$ of $s\in [0,1]$
behaves as $d\to\infty$. Intuition suggests that since the measure
of $[-s_d, 1]^d$ concentrates near its border as $d$ grows, in order to minimize
the average value of the norm over this cube, the origin should be increasingly closer to the boundary of $[-s_d, 1]^d$. Or, in other words, $s_d$ should approach $0$ as $d\to\infty$. This intuition is, in fact,
completely erroneous, as formula (\ref{lipanyd}) below shows
(see also the proof of  Corollary \ref{alldinftybounds}): When $d\to\infty$, the optimal $s_d$ tends to 1, and thus the optimal
averaging cube has $\ell_\infty$ diameter approaching 2. Nevertheless, we shall
show that in the euclidean case the above intuition is correct:
As $d\to\infty$
the origin must indeed be increasingly closer to the boundary of
the optimal ball, in order to minimize
the average value of the norm, and the $\ell_2$ diameter
of the optimal averaging ball must approach 1 rather than 2.
This helps to understand why the asymptotic behavior of
$\|M\|_{\operatorname{Op}(1)}$ is so different in the
$\ell_\infty$ and $\ell_2$ cases.

Next we specialize the preceding theorem to the Lipschitz and H\"older functions,
obtaining the following corollary.

\begin{corollary}
  \label{corhold}
  Let  $\alpha\in (0,1]$, and consider the  space
  $(\mathbb{R}^d, \|\cdot\|_\infty)$. Then, on $\operatorname{Lip}_\alpha
  (\mathbb{R}^d, \|\cdot\|_\infty)$,
  \begin{equation}\label{holderanyd}
    \|M\|_{\operatorname{Op}(\alpha)}=\min_{0\le s\le 1}\left\{
\frac{d}{(1+s)^d}\left[\frac{2^d
s^{\alpha+d}}{\alpha+d}+\sum_{j=0}^{d-1}\binom{d-1}{j}\frac{s^{d-1-j}-s^{d+\alpha}}{\alpha+j+1}\right]\right\}.
  \end{equation}
In particular, when $\alpha = 1$, that is, for Lipschitz functions on
$(\mathbb{R}^d, \|\cdot\|_\infty)$ we have
  \begin{equation}\label{lipanyd}
    \|M\|_{\operatorname{Op}(1)}=\frac{d}{d+1}-\frac{1}{d+1}\max_{0<s<1}\left\{s-\frac{2^ds^{d+1}}{(1+s)^d}\right\}.
  \end{equation}
\end{corollary}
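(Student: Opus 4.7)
The plan is to specialize formula (\ref{dcubesbis}) to the H\"older modulus $\omega(t) = t^\alpha$ and evaluate the two integrals explicitly. First I note that the substitution $\omega(t) = t^\alpha$ makes the $t$-dependence trivial: since $\omega(tu)/\omega(t) = u^\alpha$, the factor $1/\omega(t)$ cancels against all $\omega(t\cdot)$ inside the bracket of (\ref{dcubesbis}). Consequently the $\sup_{t>0}$ is vacuous and the $\inf_{0\le s \le 1}$ can be written as a $\min$, since the integrand is continuous in $s$ on the compact interval $[0,1]$.

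For the first integral in (\ref{dcubesbis}), I would simply compute
$
2^d\int_0^s u^{d-1}u^\alpha\,du = \dfrac{2^d s^{d+\alpha}}{\alpha+d}.
$
For the second integral, I would expand $(u+s)^{d-1} = \sum_{j=0}^{d-1}\binom{d-1}{j} u^j s^{d-1-j}$ by the binomial theorem, interchange sum and integral, and use $\int_s^1 u^{j+\alpha}\,du = (1 - s^{j+\alpha+1})/(j+\alpha+1)$. The identity $s^{d-1-j}\cdot s^{j+\alpha+1} = s^{d+\alpha}$ then collapses each term into the form $(s^{d-1-j} - s^{d+\alpha})/(\alpha+j+1)$, yielding exactly the sum displayed in (\ref{holderanyd}). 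Multiplying by the prefactor $d/[(1+s)^d\,\omega(t)]$ and combining with the first integral gives (\ref{holderanyd}).

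For the Lipschitz case $\alpha=1$, I would prefer a cleaner route that avoids the explicit binomial sum: in $\int_s^1 (u+s)^{d-1}u\,du$ substitute $v = u+s$ to obtain
$
\int_{2s}^{1+s} v^{d-1}(v-s)\,dv = \dfrac{(1+s)^{d+1}-(2s)^{d+1}}{d+1} - s\,\dfrac{(1+s)^d-(2s)^d}{d}.
$
Adding the first integral $2^d s^{d+1}/(d+1)$, multiplying by $d/(1+s)^d$, and simplifying, the $(1+s)^d$ cancellations leave
\[
\|M\|_{\operatorname{Op}(1)} = \min_{0\le s\le 1}\frac{1}{d+1}\left[d - s + \frac{2^d s^{d+1}}{(1+s)^d}\right],
\]
which rearranges into (\ref{lipanyd}). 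The only routine obstacle is the bookkeeping in the last algebraic simplification; one must carefully track the two contributions with denominators $d$ and $d+1$ so that the coefficient of $2^d s^{d+1}/(1+s)^d$ collapses to $1/(d+1)$. The endpoints $s=0$ and $s=1$ both give zero inside the max, confirming that the maximum is attained in the open interval $(0,1)$, as indicated in the statement.
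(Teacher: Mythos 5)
Your proposal is correct and follows essentially the same route as the paper: specialize (\ref{dcubesbis}) to $\omega(t)=t^\alpha$, integrate (via the binomial expansion for general $\alpha$, and via the substitution $v=u+s$ for $\alpha=1$), and the resulting algebra checks out against both (\ref{holderanyd}) and (\ref{lipanyd}). One small point: the observation that $g_d(s):=s-2^ds^{d+1}/(1+s)^d$ vanishes at $s=0$ and $s=1$ does not by itself show the maximum is attained in the open interval --- you also need $g_d>0$ somewhere in $(0,1)$ (immediate, since $g_d(s)\sim s$ as $s\to 0^+$); the paper obtains this, together with uniqueness of the maximizer, from the strict concavity of $g_d$ on $(0,1)$.
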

\begin{proof} To obtain (\ref{holderanyd}),
use  (\ref{dcubesbis}) in Theorem \ref{cubes} with
$\omega\left(t\right) =t^{\alpha}$, and integrate.
As for (\ref{lipanyd}), it does not seem to be easy to derive
it from (\ref{holderanyd}) by substituting $\alpha = 1$. Instead, use
(\ref{dcubesbis}) again, evaluating the integral $\int_s^1(u+s)^{d-1}udu$ via the change of variable $v = u + s$. This yields
(\ref{lipanyd}) but with $\max_{0\le s \le 1}$. To further refine this expression and obtain  $\max_{0 < s < 1}$, we observe that the maximum is actually achieved at some interior
point of the unit interval. This can be seen by writing $g_d (s) :=s-2^ds^{d+1}/(1+s)^d$ for $s\in[0,1]$,
and noting that  since
$g_d(0)=g_d(1)=0$, and $g_d^{\prime\prime}<0$ on $(0,1)$, the
function is strictly concave there. Thus, it has a unique maximum,
which must occur at some interior point.
\end{proof}

\begin{remark} Specializing formula (\ref{holderanyd}) to $d = 1$ and
$d = 2$ gives the following expressions. We mention that
they can also be obtained easily and directly from
(\ref{dcubes}).

 When $d = 1$, we find that for every   $f\in \operatorname{Lip}_\alpha (\mathbb{R})$ and
every $\alpha\in (0,1]$,
\begin{equation}\label{1d}
\|M\|_{\operatorname{Op}(\alpha)}=\min_{0<s<1}\left\{
\dfrac{1+s^{1+\alpha}}{(1+\alpha)\left( 1+s\right)}\right\}.
\end{equation}
Furthermore, this result is independent of the $\mathbb{R}$ norm, since $d =1$.
The fact that the unique minimum occurs in the interior of $(0,1)$ is
shown, as above,  by elementary calculus arguments:
Fix $\alpha\in (0,1]$, and  for $s\in [0,1]$ write  $g_\alpha (s) :=
\dfrac{1+s^{\alpha+1}}{(\alpha+1)\left(  1+s\right)}$. Evaluating
$g_\alpha$ on 0 and 1 we see (by inspection)  that it achieves its maximum value at
these points. Since $g_\alpha^{\prime\prime} > 0$ on $(0,1)$, the
function is strictly convex there, and thus it has a unique minimum.

It is clear from (\ref{1d}) that on the real line, $\|M\|_{\operatorname{Op}(\alpha)} < (1+\alpha)^{-1}$, which
is the sharp bound in the local case, that is, for H\"older
functions on intervals (cf. Corollary \ref{lipholloc} below).
Thus, H\"older constants are smaller on the line than on proper
subintervals. Note also that as $\alpha
\to 0$, $\|M\|_{\operatorname{Op}(\alpha)} \to 1$, again by
(\ref{1d}). Convergence to 1 as $\alpha
\to 0$ holds also in dimension 2, by (\ref{2d}) below.

When $d=2$, formulas (\ref{holderanyd}) and (\ref{dcubes}) become
\begin{equation}\label{2d}
\|M\|_{\operatorname{Op}(\alpha)} =\min_{0\leq
s\leq1}2\left(\dfrac{\alpha+1+\left( \alpha+2\right) s+\left(
2\alpha+1\right) s^{\alpha+2}}{(\alpha+1)(\alpha+2)\left( 1+s\right)
^{2}}\right).
\end{equation}
Here calculus arguments regarding extrema are more involved, and in fact,
as $d$ grows the formulas given by (\ref{holderanyd}) become less
manageable, though of course,
numerical estimation is possible. However, in the simpler Lipschitz
case, where $\alpha =1$, there is still a good deal of explicit information that can
be extracted from (\ref{lipanyd}). Contrary to our usual notation, in the next corollary
we shall indicate the dependency of the
maximal operator on the dimension $d$ by writing $M_d$.
\end{remark}

\begin{corollary}
  \label{alldinftybounds}
  On $\operatorname{Lip}
  (\mathbb{R}^d, \|\cdot\|_\infty)$,   $\|M_d\|_{\operatorname{Op}(1)} = (d - 1)/(d + 1) +
  o (1/(d + 1))$. More precisely,
    \begin{equation}\label{hilo}
   \frac{d-1}{d+1} <  \|M_d\|_{\operatorname{Op}(1)}
   \le \frac{d}{d+1} - \frac{1}{d+1}\left(1 - \frac{1}{\sqrt d} \right)
   \left[1 - \left(1 - \frac{1}{2\sqrt{d} -1} \right)^d\right].
  \end{equation}
\end{corollary}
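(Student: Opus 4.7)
The plan is to read everything off from formula (\ref{lipanyd}), which expresses the norm as
\[
\|M_d\|_{\operatorname{Op}(1)} \;=\; \frac{d}{d+1}\;-\;\frac{G_d}{d+1},
\qquad G_d\;:=\;\max_{0<s<1} g_d(s),\qquad g_d(s)\;:=\;s-\frac{2^d s^{d+1}}{(1+s)^d}.
\]
Both inequalities in (\ref{hilo}) then reduce to bracketing the single number $G_d$, and the asymptotic claim is obtained by comparing the two bounds.

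For the lower bound $\|M_d\|_{\operatorname{Op}(1)}>(d-1)/(d+1)$, it suffices to show $G_d<1$. I will use the concavity argument already established in the proof of Corollary \ref{corhold}: $g_d$ is continuous, vanishes at $s=0$ and $s=1$, and is strictly concave on $(0,1)$, so its maximum is attained at a unique interior point $s^\ast\in(0,1)$. Because $2^d (s^\ast)^{d+1}/(1+s^\ast)^d>0$, we then have $G_d=g_d(s^\ast)<s^\ast<1$, giving the strict inequality $\|M_d\|_{\operatorname{Op}(1)}>d/(d+1)-1/(d+1)=(d-1)/(d+1)$.

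For the upper bound, I will test $g_d$ at the point $s_d:=1-1/\sqrt d\in(0,1)$. The key algebraic simplification is
\[
\frac{2(1-1/\sqrt d)}{2-1/\sqrt d}\;=\;\frac{2\sqrt d-2}{2\sqrt d-1}\;=\;1-\frac{1}{2\sqrt d-1},
\]
so that
\[
g_d(s_d)\;=\;\Bigl(1-\tfrac{1}{\sqrt d}\Bigr)\;-\;\Bigl(1-\tfrac{1}{\sqrt d}\Bigr)^{d+1}\!\left(\frac{2}{2-1/\sqrt d}\right)^{\!d}
\;=\;\Bigl(1-\tfrac{1}{\sqrt d}\Bigr)\Bigl[1-\bigl(1-\tfrac{1}{2\sqrt d-1}\bigr)^{d}\Bigr].
\]
Since $G_d\ge g_d(s_d)$, inserting this into the identity for $\|M_d\|_{\operatorname{Op}(1)}$ yields precisely the upper estimate in (\ref{hilo}).

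Finally, for the asymptotics $(d-1)/(d+1)+o(1/(d+1))$, rewrite the upper bound as
\[
\|M_d\|_{\operatorname{Op}(1)}\;\le\;\frac{d-1}{d+1}\;+\;\frac{1}{(d+1)\sqrt d}\;+\;\frac{1}{d+1}\Bigl(1-\tfrac{1}{\sqrt d}\Bigr)\Bigl(1-\tfrac{1}{2\sqrt d-1}\Bigr)^{d}.
\]
The middle term is manifestly $o(1/(d+1))$; the last is $o(1/(d+1))$ because $(1-1/(2\sqrt d-1))^d\le\exp(-d/(2\sqrt d-1))\to 0$. Combined with the strict lower bound, this gives the claimed equality.

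The only non-mechanical step is the choice of the test point $s_d=1-1/\sqrt d$: this is where I expect a reader might stumble, since neither $s$ near $0$ nor $s$ too close to $1$ produces a useful lower estimate on $G_d$. The principle is that one needs $1-s_d\to 0$ so that the linear term $s$ in $g_d(s)$ is close to $1$, while simultaneously $d(1-s_d)\to\infty$ so that $(2s_d/(1+s_d))^d$ decays to $0$; the scale $1/\sqrt d$ balances these requirements and yields the clean closed-form expression above.
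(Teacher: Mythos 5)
Your proof is correct and follows essentially the same route as the paper: the lower bound comes from $g_d(s)<s\le 1$ at the interior maximizer, and the upper bound from evaluating formula (\ref{lipanyd}) at the test point $s=1-1/\sqrt d$, whose algebraic simplification you carry out exactly as needed. The only difference is that you spell out the asymptotic verification and the heuristic behind the $1/\sqrt d$ scale, which the paper leaves implicit.
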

\begin{proof} The lower bound follows from (\ref{lipanyd}) by noticing
that   $g_d (s) :=s-2^ds^{d+1}/(1+s)^d < s \le 1$ for all $s\in (0,1]$.
To get an upper bound it is enough to take (\ref{lipanyd}) and assign
any value from $[0,1]$ to $s$. For instance, the choices $s = 0$
and $s = 1$ recover the general bound $d/(d+1)$ from Corollary
\ref{univLipbounds} (so  $\|M_d\|_{\operatorname{Op}(1)} = (d - 1)/(d + 1) +  O(1/(d + 1))$), but of course one can do better: The right hand side of (\ref{hilo})
is obtained by taking $s= 1 -  1/\sqrt d$.
\end{proof}

While sufficient to prove asymptotic equivalence,
the choice $s= 1 -  1/\sqrt d$ made above is somewhat arbitrary
and can easily be improved, at the cost of getting more complicated
upper bounds (so the second inequality in (\ref{hilo})
is also strict). The optimal choice is the unique solution to the polynomial equation
given next.

\begin{lemma}
  \label{polyeq}
The norm of the maximal operator $M$ on $\operatorname{Lip}
  (\mathbb{R}^d, \|\cdot\|_\infty)$ is obtained by evaluating
\begin{equation}\label{optimalfunc}
h_d (s) := \frac{d}{d+1}-\frac{1}{d+1}\left(s-\frac{2^ds^{d+1}}{(1+s)^d}\right)
\end{equation}
 on the unique solution $s_d$ inside $(0,1)$ of the polynomial equation
 \begin{equation}\label{root}
 p_d (s) := 2^ds^{d+1} - (1+s)^{d+1} + 2^d(d+1)s^d=0.
 \end{equation}
\end{lemma}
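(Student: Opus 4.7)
The plan is to unpack formula (\ref{lipanyd}) from Corollary \ref{corhold}, which already tells us
\[
\|M\|_{\operatorname{Op}(1)} = \frac{d}{d+1} - \frac{1}{d+1}\max_{0<s<1} g_d(s), \qquad g_d(s) := s - \frac{2^d s^{d+1}}{(1+s)^d},
\]
and to identify the maximizer $s_d$ as a root of $p_d$. Note that with this notation the claim of the lemma is simply $\|M\|_{\operatorname{Op}(1)} = h_d(s_d)$.

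The main step is a direct differentiation. Using the quotient rule and simplifying, I would compute
\[
g_d'(s) = 1 - 2^d \cdot \frac{(d+1)s^d(1+s) - d s^{d+1}}{(1+s)^{d+1}} = \frac{(1+s)^{d+1} - 2^d(d+1)s^d - 2^d s^{d+1}}{(1+s)^{d+1}} = \frac{-p_d(s)}{(1+s)^{d+1}}.
\]
Consequently, for $s\in(0,1)$ the critical point equation $g_d'(s)=0$ coincides exactly with the polynomial equation $p_d(s)=0$ displayed in (\ref{root}).

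Next I would establish existence and uniqueness of a root of $p_d$ in $(0,1)$. Existence is immediate from the intermediate value theorem, since $p_d(0) = -1 < 0$ and $p_d(1) = 2^d - 2^{d+1} + 2^d(d+1) = d\,2^d > 0$. For uniqueness I would appeal to the strict concavity of $g_d$ on $(0,1)$, which was already shown in the proof of Corollary \ref{corhold} (where it was used to argue that the maximum of $g_d$ is attained at an interior point). Strict concavity forces $g_d'$ to be strictly decreasing on $(0,1)$, and therefore $g_d'$ — equivalently $p_d$, by the identity above — has at most one zero in $(0,1)$.

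Finally, since $g_d(0)=g_d(1)=0$ and $g_d$ is strictly concave on the open interval, its unique critical point $s_d$ is automatically the global maximizer on $[0,1]$. Substituting back into (\ref{lipanyd}) yields $\|M\|_{\operatorname{Op}(1)} = \tfrac{d}{d+1} - \tfrac{1}{d+1} g_d(s_d) = h_d(s_d)$, which is exactly the statement of the lemma. The only nontrivial step is the derivative computation together with recognising the numerator as $-p_d(s)$; everything else reduces to facts already established in the paper or to elementary calculus. I do not anticipate any serious obstacle.
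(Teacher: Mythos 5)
Your proposal is correct and follows essentially the same route as the paper: the paper likewise reads off $h_d$ from (\ref{lipanyd}) and identifies $s_d$ as the unique interior critical point of $g_d(s)=s-2^ds^{d+1}/(1+s)^d$, using the strict concavity established in the proof of Corollary \ref{corhold}, with $g_d'=0$ being equivalent to $p_d=0$. Your version merely makes explicit the derivative computation $g_d'(s)=-p_d(s)/(1+s)^{d+1}$ and the sign check $p_d(0)=-1<0$, $p_d(1)=d\,2^d>0$, which the paper leaves to the reader.
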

\begin{proof} Formula  (\ref{optimalfunc}) follows immediately from
(\ref{lipanyd}). Recall from the proof of Corollary \ref{corhold}
that the function
$g_d (s) :=s-2^ds^{d+1}/(1+s)^d$ has a unique maximum on $(0,1)$, to
be found by solving $g_d^{\prime} = 0$, or equivalently,
 $p_d (s) :=(1+s)^{d+1}-2^d(d+1)s^d-2^ds^{d+1}=0$, on the said interval.
 \end{proof}

When $d \le 3$, $\deg p_d \le 4$, so its roots can be found explicitly
using Cardan's formula.
We do this next, thereby obtaining the sharp constant in dimension 1,
and for the $\ell_\infty$ norm, the sharp constants in dimensions 2 and 3. Details
are included for the readers convenience.

\begin{corollary}
  \label{lip1} On $\operatorname{Lip}(\mathbb{R})$ we have
  $\|M\|_{\operatorname{Op}(1)}=\sqrt{2}-1.$
\end{corollary}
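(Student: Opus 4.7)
The plan is to specialize the formulas already established in the excerpt to the case $d=1$, $\alpha=1$ and then do a short calculus exercise. The most direct route is through formula (\ref{1d}), which for $\alpha = 1$ reads
\[
\|M\|_{\operatorname{Op}(1)} = \min_{0<s<1} \frac{1+s^{2}}{2(1+s)}.
\]
(Alternatively, one could invoke Lemma \ref{polyeq}: for $d=1$, the polynomial $p_1(s) = 2s^2-(1+s)^2+4s$ simplifies to $s^2+2s-1$, leading to the same critical point; either derivation is a one-line specialization.)

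Next I would carry out the minimization of $g(s) := (1+s^{2})/(2(1+s))$ on $(0,1)$. A direct computation gives
\[
g'(s) = \frac{2s(1+s)-(1+s^{2})}{2(1+s)^{2}} = \frac{s^{2}+2s-1}{2(1+s)^{2}},
\]
so that $g'(s)=0$ iff $s^{2}+2s-1=0$. The unique root in $(0,1)$ is $s_{1}=\sqrt{2}-1$, and since the preceding remark already establishes that $g$ is strictly convex on $(0,1)$ with boundary values equal to its maximum, this critical point is the unique minimum.

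Finally, I would evaluate $g$ at $s_{1}=\sqrt{2}-1$. Using $s_{1}^{2}=3-2\sqrt{2}$ and $1+s_{1}=\sqrt{2}$,
\[
g(s_{1})=\frac{1+(3-2\sqrt{2})}{2\sqrt{2}} = \frac{4-2\sqrt{2}}{2\sqrt{2}} = \frac{2-\sqrt{2}}{\sqrt{2}} = \sqrt{2}-1,
\]
which yields the stated value $\|M\|_{\operatorname{Op}(1)} = \sqrt{2}-1$. There is no genuine obstacle here; the whole argument reduces to identifying the minimizer of a rational function of one variable, and all the real work has already been done in Theorem \ref{cubes} and Lemma \ref{polyeq}. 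The only point worth being explicit about is that the minimum is indeed interior, which has been taken care of in the remark following Corollary \ref{corhold}.
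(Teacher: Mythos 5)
Your proposal is correct and follows essentially the same route as the paper: both specialize the one-dimensional formula from Theorem \ref{cubes} to the function $(1+s^2)/(2(1+s))$ (which is identical to $h_1$ in Lemma \ref{polyeq}), locate the interior critical point via $s^2+2s-1=0$, and evaluate at $s=\sqrt{2}-1$. Your form of the critical-point equation is in fact the correct one (the paper's displayed $p_1(s)=1-2s-2s^2$ contains a typo; it should be $1-2s-s^2$, whose positive root is $\sqrt{2}-1$), so nothing further is needed.
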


\begin{proof} Solving
$p_1 (s)=1 - 2s - 2s^2= 0$ on $(0,1)$, we find that $h_1(s_1) = h_1(\sqrt2
- 1) = \sqrt2 -1$.
\end{proof}

\begin{corollary} \label{lip2}
 On $\operatorname{Lip}(\mathbb{R}^2,\|\cdot\|_\infty)$ we have
  \begin{equation}\label{lipdim2}
  \|M\|_{\operatorname{Op}(1)}= \frac{4}{\sqrt3}\cos\left(\frac{5\pi}{18}\right) +
   \sqrt3\sec\left(\frac{5\pi}{18}\right) - \frac14\sec^2\left(\frac{5\pi}{18}\right) -3.
      \end{equation}
\end{corollary}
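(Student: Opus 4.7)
The plan is to apply Lemma~\ref{polyeq} with $d=2$: the operator norm equals $h_2(s_2)$, where $s_2$ is the unique root in $(0,1)$ of
$$p_2(s) = 4s^3 - (1+s)^3 + 12s^2 = 3s^3 + 9s^2 - 3s - 1.$$
Thus the work splits into (i) solving this cubic explicitly, and (ii) plugging the root into $h_2$ and simplifying to the stated trigonometric form.

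For step (i), I would depress the cubic via the substitution $s = y-1$, which yields $y^3 - 4y + 8/3 = 0$. Since the discriminant $-4(-4)^3 - 27(8/3)^2 = 64$ is positive, the three real roots are produced by the trigonometric form of Cardan's formula: setting $y = (4/\sqrt3)\cos\theta$ and invoking the identity $4\cos^3\theta - 3\cos\theta = \cos 3\theta$ collapses the equation to $\cos 3\theta = -\sqrt{3}/2$. The branch $\theta = 5\pi/18$ is the one that makes $s = y - 1$ land in $(0,1)$, as can be checked numerically or deduced from the strict concavity of $g_2$ on $(0,1)$ established in the proof of Corollary~\ref{corhold} (which forces a unique critical point there). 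Hence
$$s_2 = \frac{4}{\sqrt3}\cos\!\left(\frac{5\pi}{18}\right) - 1.$$

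For step (ii), rather than substitute $s_2$ blindly into $h_2$, I would exploit the polynomial identity $p_2(s_2)=0$, i.e.\ $4s_2^3 = (1+s_2)^3 - 12s_2^2$, to simplify
$$\frac{4s_2^3}{(1+s_2)^2} = (1+s_2) - \frac{12 s_2^2}{(1+s_2)^2}, \qquad h_2(s_2) = 1 - \left(\frac{2s_2}{1+s_2}\right)^2.$$
Setting $c := \cos(5\pi/18)$, so that $1 + s_2 = 4c/\sqrt{3}$, yields $2s_2/(1+s_2) = 2 - (\sqrt{3}/2)\sec(5\pi/18)$; squaring produces $h_2(s_2) = -3 + 2\sqrt{3}\sec(5\pi/18) - \tfrac34\sec^2(5\pi/18)$.

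The remaining task is to verify that this agrees with the form in~(\ref{lipdim2}). Subtracting the two expressions and clearing denominators (multiplying by $2\sqrt{3}\,c^2$) reduces the desired identity to $8c^3 - 6c + \sqrt{3} = 0$; since $3\cdot(5\pi/18) = 5\pi/6$, the triple-angle formula gives $4c^3 - 3c = \cos(5\pi/6) = -\sqrt{3}/2$, which is precisely the required relation. The main obstacle is bookkeeping — selecting the correct root of the depressed cubic and carefully executing the trigonometric algebra — rather than any conceptually new ingredient beyond Cardan's formula and Lemma~\ref{polyeq}.
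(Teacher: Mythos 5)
Your proposal is correct and follows essentially the same route as the paper: invoke Lemma \ref{polyeq} with $d=2$, depress the cubic, solve it by the trigonometric (Vi\`ete) form of Cardan's formula to get $s_2=\tfrac{4}{\sqrt3}\cos(5\pi/18)-1$, and evaluate $h_2(s_2)$. Your intermediate simplification $h_2(s_2)=1-\bigl(2s_2/(1+s_2)\bigr)^2$ via the relation $p_2(s_2)=0$ is exactly the device the paper uses in the three-dimensional case, and your closing check that $-3+2\sqrt3\sec(5\pi/18)-\tfrac34\sec^2(5\pi/18)$ agrees with the stated form via $4\cos^3\theta-3\cos\theta=\cos3\theta$ correctly fills in the step the paper compresses into ``simplifying once more.''
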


\begin{proof}
We use Lemma \ref{polyeq},  finding first
the unique root of $ s^3 + 3 s^2 -s -1/3$ in $(0,1)$. The change of variable
$s\mapsto s-1$ leads to the reduced form $s^3  - 4 s + 8/3.$
Since $(8/3)^2 - 4^4/3^3 = -64/27 < 0$, this is the irreducible
case in Cardan's
formula. Following  Vi\`{e}te we write
$s=\left(  4/\sqrt{3}\right)
y$, to
obtain $4y^{3}-3y=-\sqrt{3}/2=\cos\left(  5\pi/6 + 2 \pi k\right)$,
and now we use the trigonometrical identity $4\cos^{3}\left(
\vartheta\right)  -3\cos\left(  \vartheta\right) = \cos\left(  3\vartheta\right)$ to conclude that
$s=\left(
4/\sqrt{3}\right)  \cos\left(  5\pi/18+2k\pi/3\right)$,  $k=0,1,2$.
Of these three roots only the one corresponding to $k=0$ belongs
to $(1,2)$, so
 $s_2 =
 \frac4{\sqrt3}\cos{\frac{5\pi}{18}}
-1 \in (0,1)$. Finally, evaluating $h_2$
(cf. (\ref{optimalfunc})) on $s_2$
and simplifying once more we find that
$$h_2(s_2) =
\frac{4}{\sqrt3}\cos\left(\frac{5\pi}{18}\right) +
\sqrt3\sec\left(\frac{5\pi}{18}\right) -
\frac14\sec^2\left(\frac{5\pi}{18}\right) -3 .$$
\end{proof}

\begin{remark} Therefore $\|M\|_{\operatorname{Op}(1)}
 \approx
0.574$ on $\operatorname{Lip}(\mathbb{R}^2,\|\cdot\|_\infty)$.
\end{remark}

\begin{remark} In dimension two, the results (\ref{2d}) and (\ref{lipdim2}) hold verbatim
if we use the $\ell_1$ norm instead of the $\ell_\infty$ norm,
since in dimension two $\ell_1$ balls are just rotated cubes. In
fact, this phenomenon repeats itself in every dimension, despite the
fact that the geometry is very different when $d\ge 3$. Thus, the
next result holds also for the $\ell_1$ norm.
\end{remark}

\begin{corollary} \label{lip3}
 On $\operatorname{Lip}(\mathbb{R}^3,\|\cdot\|_\infty)$ we have
  \begin{equation}\label{lipdim3}
  \|M\|_{\operatorname{Op}(1)}= 1-\frac{\frac{2^{9/4}}{((\sqrt{8}+\sqrt{7})^{1/3}+(\sqrt{8}-\sqrt{7})^{1/3})^{3/2}}}{\left(1 + \left(\frac{2^{9/4}}{((\sqrt{8}+\sqrt{7})^{1/3} + (\sqrt{8}-\sqrt{7})^{1/3})^{3/2}}-1 \right)^{1/2} \right)^3}.
      \end{equation}
\end{corollary}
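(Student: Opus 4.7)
The plan is to apply Lemma \ref{polyeq} with $d = 3$ and then chase the resulting quartic through a sequence of substitutions that reveal its Cardano structure. By Lemma \ref{polyeq}, $\|M\|_{\operatorname{Op}(1)} = h_3(s_3)$ where $s_3 \in (0,1)$ is the unique root of $p_3(s) = 8s^4 - (1+s)^4 + 32 s^3 = 0$, and $h_3(s) = \frac34 - \frac{s}{4} + \frac{2s^4}{(1+s)^3}$.

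The first step is to simplify $h_3(s_3)$ at the critical point. Writing $h_3(s)$ over the common denominator $4(1+s)^3$ and using $p_3(s_3)=0$ to substitute $8s_3^4 = (1+s_3)^4 - 32 s_3^3$, I expect the algebra to collapse to
\[
h_3(s_3) \;=\; 1 \;-\; \Bigl(\tfrac{2s_3}{1+s_3}\Bigr)^{\!3}.
\]
This motivates the substitution $t = 2s/(1+s)$ (equivalently $s = t/(2-t)$), which is a bijection from $(0,1)$ to $(0,1)$ and converts $p_3(s)=0$ into the simpler quartic
\[
3t^4 - 8 t^3 + 2 = 0,
\]
whose relevant root $t_3 \in (0,1)$ satisfies $h_3(s_3) = 1 - t_3^3$. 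A second substitution $v = 2/t$ turns this into $v^4 - 32 v + 24 = 0$, with the relevant root in $(2,\infty)$.

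The next step is Ferrari's method on $v^4 = 32 v - 24$: write $(v^2+y)^2 = 2y v^2 + 32v + (y^2-24)$ and force the right hand side to be a perfect square in $v$. The discriminant condition yields the resolvent cubic $y^3 - 24 y - 128 = 0$. Its real root is given by Cardano as $y = \sqrt[3]{64 + 16\sqrt{14}} + \sqrt[3]{64 - 16\sqrt{14}}$. The crucial denesting step, which I expect to be the main obstacle, exploits the identity
\[
64 \pm 16\sqrt{14} \;=\; (2\sqrt{2})^3 \bigl(\sqrt{8} \pm \sqrt{7}\bigr),
\]
so that $y = 2\sqrt{2}\, A$ with $A := (\sqrt{8}+\sqrt{7})^{1/3} + (\sqrt{8}-\sqrt{7})^{1/3}$. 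As a consistency check, since $(\sqrt{8}+\sqrt{7})(\sqrt{8}-\sqrt{7}) = 1$ and $(\sqrt{8}+\sqrt{7}) + (\sqrt{8}-\sqrt{7}) = 4\sqrt{2}$, the number $A$ satisfies $A^3 - 3A - 4\sqrt{2} = 0$, from which $y = 2\sqrt{2}A$ is seen to solve $y^3 - 24 y - 128 = 0$.

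It remains to finish the quartic. With $y^2 - 24 = 128/y$, the Ferrari identity becomes $(v^2+y)^2 = 2y(v + 8/y)^2$; the sign giving real roots produces $v^2 - \sqrt{2y}\,v + (y - 8\sqrt{2/y}) = 0$. The root $v_3 \in (2,\infty)$ is $v_3 = \tfrac12\bigl(\sqrt{2y} + \sqrt{32\sqrt{2/y} - 2y}\bigr)$, which factors as
\[
v_3 \;=\; \frac{\sqrt{2y}}{2}\left(1 + \sqrt{\frac{16\sqrt{2}}{y^{3/2}} - 1}\right).
\]
Setting $B := 16\sqrt{2}/y^{3/2}$, I obtain $t_3^3 = B/(1 + \sqrt{B-1})^3$. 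Finally, using $y = 2\sqrt{2}A$ and $(2\sqrt{2})^{3/2} = 2^{9/4}$ gives $y^{3/2} = 2^{9/4} A^{3/2}$ and $16\sqrt{2} = 2^{9/2}$, so $B = 2^{9/4}/A^{3/2}$. Inserting this into $\|M\|_{\operatorname{Op}(1)} = 1 - t_3^3$ yields the stated formula. The only subtleties to verify carefully are (i) selecting the correct Ferrari sign and the larger of the two resulting $v$-roots to land in $(2,\infty)$, and (ii) the denesting identity above, which is the key algebraic coincidence driving the closed form.
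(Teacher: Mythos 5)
Your proposal is correct and follows essentially the same route as the paper: both invoke Lemma \ref{polyeq}, use $p_3(s_3)=0$ to collapse $h_3(s_3)$ to $1-\bigl(2s_3/(1+s_3)\bigr)^3$, and reduce the quartic by a M\"obius substitution to the same depressed form (your $v^4-32v+24=0$ is the paper's $2t^4-8t+3=0$ under $t=v/2$). The only difference is that the paper merely verifies the explicit root by direct substitution, whereas you derive it constructively via Ferrari's method, the resolvent cubic $y^3-24y-128=0$, and the denesting $64\pm16\sqrt{14}=(2\sqrt{2})^3(\sqrt{8}\pm\sqrt{7})$; I have checked each of these steps (including the sign/root selection landing in $(2,\infty)$ and the final identification $B=2^{9/4}A^{-3/2}$), and they yield exactly the stated closed form.
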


\begin{proof}
By Lemma \ref{polyeq}, it is enough to find the
the unique root $s_3$ of $p_3 (s) = 7s^4+28s^3-6s^2-4s-1=0$ in $(0,1)$
and then evaluate $h_3(s_3)$ (see (\ref{optimalfunc})). Using the change of variable
$s= 1/(2t -1)$, we note that $0 < s < 1$ if and only if $t > 1$,
and $p_3 (s) = 0$ if and only if
$  2t^4-8t+3=0.
$
Now it can be checked by direct substitution that
$$
t_3 = \frac{\left((\sqrt{8}+\sqrt{7})^{1/3}+(\sqrt{8}-\sqrt{7})^{1/3}\right)^{1/2}}{2^{3/4}}\left(1 + \left(\frac{2^{9/4}}{((\sqrt{8}+\sqrt{7})^{1/3} + (\sqrt{8}-\sqrt{7})^{1/3})^{3/2}}-1 \right)^{1/2} \right)
$$
satisfies $2t^4-8t+3=0$, and furthermore, it is the unique $t > 1$
with this property, by the uniqueness of $s_3$ in $(0,1)$.
Rather than  substituting the value of $s_3$ directly in $h_3$, it
is more convenient to simplify  $h_3(s_3)$ first.
Note that
$$h_3(s_3) = \frac{3 + 8 s_3 + 6 s_3^2 + 7 s_3^4}{4 (1 + s_3)^3},
$$
and also
$7s_3^4 = - 28 s_3^3  + 6s_3^2 + 4 s_3 + 1$, since  $p_3(s_3) = 0$.
Eliminating the fourth order term and simplifying we get
$$
\|M\|_{\operatorname{Op}(1)}= h_3(s_3) = 1-8\left(\frac{s_3}{1+s_3}\right)^3.
$$
Using
$s_3= 1/(2t_3 -1)$, the preceding equality becomes
$
\|M\|_{\operatorname{Op}(1)}=   1-t_3^{-3},
$
and now (\ref{lipdim3}) follows by substituting in the numerical value of $t_3$.
\end{proof}
\begin{remark} $\|M\|_{\operatorname{Op}(1)}\approx 0.66155$ on
$\operatorname{Lip}(\mathbb{R}^3,\|\cdot\|_\infty)$.
\end{remark}

Next we study the cases $p=2$ and $p=1$.
 Since in one dimension all the $\ell_p$ unit balls coincide with
the interval $[-1,1]$, the case $d=1$ is covered by Theorem
\ref{cubes}.

\begin{lemma}\label{fijainfimo}
 Let $d\ge 2$, let $f: [0,\infty)\to [0,\infty)$ be an increasing function, and let $1\le p< \infty$. Then, for all $R>0$, all $c=(c_1,\ldots,c_d)\in \mathbb{R}^d$, and all $a\in\mathbb{R}$ such that $|a|\le|c_1|$, we have
\begin{equation*}
  \int_{B(ae_1,R)}f(\|x\|_p^p)dx \le
  \int_{B(c,R)}f(\|x\|_p^p)dx.
\end{equation*}
\end{lemma}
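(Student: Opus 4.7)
The plan is to pull the centers out as translations and then prove coordinate-wise monotonicity in the translation parameter. Setting
\[
I(z) := \int_{B(0,R)} f\bigl(\|y+z\|_p^p\bigr)\, dy,
\]
the substitution $x = y + z$ gives $\int_{B(z,R)} f(\|x\|_p^p)\, dx = I(z)$, so the lemma reduces to showing $I(ae_1) \le I(c)$. The crucial claim is that $I$ is even in each coordinate of $z$ and non-decreasing in the absolute value of each coordinate. Granting this, one successively replaces $c_d, c_{d-1}, \dots, c_2$ by $0$ (using $|0| \le |c_i|$) and finally $c_1$ by $a$ (since $|a| \le |c_1|$), obtaining $I(ae_1) \le I(c)$.

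Evenness in $z_i$ follows from the change of variable $y_i \mapsto -y_i$, which preserves $B(0,R)$ in the $\ell_p$ norm and turns $\|y+z\|_p^p$ into $\|y+z'\|_p^p$, where $z'$ agrees with $z$ except at the $i$-th coordinate, which is negated. For the monotonicity in $|z_i|$, I would apply Fubini to isolate the $y_i$-integration. By symmetry take $i = 1$ and fix $\tilde c = (c_2, \dots, c_d)$; for $\tilde y \in \mathbb{R}^{d-1}$ with $\|\tilde y\|_p \le R$, the $\tilde y$-slice of $B(0,R)$ is the interval $[-r, r]$ with $r = r(\tilde y) := (R^p - \|\tilde y\|_p^p)^{1/p}$. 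Fubini then gives
\[
I(c_1, \tilde c) = \int_{\|\tilde y\|_p \le R} \psi_{r, A}(c_1)\, d\tilde y, \qquad \psi_{r, A}(t) := \int_{t-r}^{t+r} h(u)\, du,
\]
where $A = A(\tilde y, \tilde c) := \|\tilde y + \tilde c\|_p^p$ and $h(u) := f(|u|^p + A)$. Since $f$ is non-decreasing, $h$ is even and non-decreasing on $[0,\infty)$.

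The main step is therefore to show that $\psi_{r, A}$ is non-decreasing on $[0,\infty)$; integrating in $\tilde y$ then transfers this to $I(c_1, \tilde c)$. For $0 \le s \le t$, a direct computation (changing variables $u = v + 2r$ in the first integral below) gives
\[
\psi_{r, A}(t) - \psi_{r, A}(s) = \int_{s+r}^{t+r} h(u)\,du - \int_{s-r}^{t-r} h(u)\,du = \int_{s-r}^{t-r} \bigl[h(v+2r) - h(v)\bigr]\, dv.
\]
The integrand is non-negative: since $s \ge 0$, one has $v \ge -r$ throughout the range, so $|v+2r| = v+2r \ge r \ge |v|$, and hence $h(v+2r) \ge h(v)$ by evenness and monotonicity of $h$ on $[0,\infty)$. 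I do not foresee a serious obstacle; the only care needed is keeping the Fubini parametrization straight and noting that the pointwise comparison $|v+2r| \ge |v|$ really does hold across the full range $[s-r, t-r]$, which is precisely what restricts the monotonicity to $[0,\infty)$ and thereby justifies the coordinate-by-coordinate reduction.
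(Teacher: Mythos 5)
Your proof is correct. It takes a genuinely different route in its key ingredient: the paper's proof slices $B(c,R)$ perpendicular to $e_1$ via Fubini and then invokes Anderson's theorem (Remark \ref{anderson}) twice --- in $d-1$ dimensions to recenter each slice at the origin, giving $\int_{B(c,R)}\ge\int_{B(c_1e_1,R)}$, and then in $d$ dimensions to move $c_1e_1$ to $ae_1$ --- whereas you re-derive the needed recentering inequality from scratch, one coordinate at a time, by reducing it through Fubini to the elementary one-dimensional fact that $t\mapsto\int_{t-r}^{t+r}h(u)\,du$ is even and non-decreasing on $[0,\infty)$ whenever $h$ is even and non-decreasing in $|u|$; your sliding-window identity $\psi(t)-\psi(s)=\int_{s-r}^{t-r}\bigl[h(v+2r)-h(v)\bigr]dv$ and the pointwise check $|v+2r|\ge|v|$ for $v\ge -r$ are both sound, and the coordinate-by-coordinate collapse $c\to c_1e_1\to ae_1$ is legitimate because the monotonicity holds in each $|z_i|$ with the remaining coordinates held fixed at arbitrary values. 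What each approach buys: the paper's is shorter once Anderson's theorem is accepted as a black box, but strictly speaking Anderson applies to symmetric \emph{unimodal} (radially decreasing) integrands, so applying it to the increasing integrand $f(\|x\|_p^p)$ requires passing to $C-f$ on the bounded region in question --- a small adjustment the paper leaves implicit; your argument is entirely self-contained, avoids that point, and in effect proves the one-dimensional case of Anderson's inequality and bootstraps it by Fubini, while also isolating the slightly stronger statement that the integral is separately non-decreasing in each $|c_i|$.
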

\begin{proof}
Write $\hat{c}=(c_2,\ldots,c_d)$ and $\hat{x}=(x_2,\ldots,x_d)$. Using Fubini's Theorem and Anderson's Theorem (cf. Remark \ref{anderson})
 in $d$ and $d - 1$ dimensions, we get
  \begin{equation*}
    \int_{B(c,R)}f(\|x\|_p^p)dx=\int_{c_1-R}^{c_1+R}\left(\int_{B(\hat{c},(R^p-|c_1-x_1|^p)^{1/p})}f(|x_1|^p+\|\hat{x}\|_p^p)d\hat{x}\right)dx_1\ge
  \end{equation*}
  \begin{equation*}
    \int_{c_1-R}^{c_1+R}\left(\int_{B(0,(R^p-|c_1-x_1|^p)^{1/p})}f(|x_1|^p+\|\hat{x}\|_p^p)d\hat{x}\right)dx_1=\int_{B(c_1e_1,R)}f(\|x\|_p^p)dx\ge
    \end{equation*}
    \begin{equation*}
    \int_{B(ae_1,R)}f(\|x\|_p^p)dx.
  \end{equation*}
\end{proof}

To apply
(\ref{equalityopnorm}), it is useful to determine  on which unit vectors the supremum is attained. We have seen that one such vector in
the $\ell_\infty$ case is $(1,\dots,1)$. For other
$p$ norms, if $e_1$ can be selected, this usually leads to
simplification  of the formulas. The next lemma reduces the question
of the optimality of $e_1$ to the two dimensional case.

\begin{lemma}\label{optimald} Let $f: [0,\infty) \to [0,\infty)$ be an increasing function, and let $1\le p < \infty$. If for all $c\in \mathbb{R}^2$ and all $R\in (0,1]$ we have
\begin{equation*}
  \int_{B(c,R)}f(\|x\|_p^p)dx \le
  \int_{B(\|c\|_p e_1,R)}f(\|x\|_p^p)dx,
\end{equation*}
 then
 for all $d \ge 2$, all $c\in \mathbb{R}^d$, and all $R\in (0,1]$,
\begin{equation*}
  \int_{B(c,R)}f(\|x\|_p^p)dx \le
  \int_{B(\|c\|_p e_1,R)}f(\|x\|_p^p)dx.
\end{equation*}
\end{lemma}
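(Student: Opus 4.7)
The plan is to reduce the $d$-dimensional claim to the $d=2$ hypothesis by iterating the two-dimensional inequality, moving one coordinate of mass at a time into position~$1$, until all the ``mass'' of $c$ is concentrated there. Concretely, set $c^{(0)}:=c$ and for $k=1,\ldots,d-1$ define
\begin{equation*}
c^{(k)} := \bigl(\|(c_1,\ldots,c_{k+1})\|_p,\,0,\ldots,0,\,c_{k+2},\ldots,c_d\bigr),
\end{equation*}
where the $k$ zeros occupy positions $2,\ldots,k+1$; then $c^{(d-1)}=\|c\|_p\,e_1$. The goal is to prove
\begin{equation*}
\int_{B(c^{(k-1)},R)} f(\|x\|_p^p)\,dx \;\le\; \int_{B(c^{(k)},R)} f(\|x\|_p^p)\,dx \qquad (k=1,\ldots,d-1),
\end{equation*}
and then to chain these $d-1$ inequalities to obtain the conclusion.

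To pass from $c^{(k-1)}$ to $c^{(k)}$, note that these two centers coincide except in coordinates $1$ and $k+1$. I would apply Fubini's theorem, integrating first over $(x_1,x_{k+1})$ with all remaining coordinates held fixed. For each admissible tuple of the other coordinates, the slice $\{(x_1,x_{k+1}):x\in B(c^{(k-1)},R)\}$ is a two-dimensional $\ell_p$-ball centered at $\bigl(\|(c_1,\ldots,c_k)\|_p,\,c_{k+1}\bigr)$ with radius
\begin{equation*}
r := \Bigl( R^p - \sum_{i=2}^{k}|x_i|^p - \sum_{i=k+2}^{d} |x_i-c_i|^p \Bigr)^{1/p} \le R \le 1,
\end{equation*}
and the integrand restricted to the slice is $g(|x_1|^p+|x_{k+1}|^p)$ for the increasing function $g(s):=f\bigl(s+\sum_{i\ne 1,\,k+1}|x_i|^p\bigr)$. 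The two-dimensional hypothesis, applicable since $r\in(0,1]$, replaces the slice center by $\bigl(\|(c_1,\ldots,c_{k+1})\|_p,\,0\bigr)$, because $\bigl\|\bigl(\|(c_1,\ldots,c_k)\|_p,c_{k+1}\bigr)\bigr\|_p = \|(c_1,\ldots,c_{k+1})\|_p$. Reassembling via Fubini gives $\int_{B(c^{(k)},R)} f(\|x\|_p^p)\,dx$, completing the step.

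There is no real obstacle here: the points to verify are routine, namely that the slice radius remains in $(0,1]$ (immediate since $r\le R\le 1$) and that the restricted integrand has the required form (immediate from $\|x\|_p^p=\sum_i|x_i|^p$ and the fact that $f$ is increasing). In particular, no induction on $d$ is required; a single pass of $d-1$ two-dimensional reductions suffices.
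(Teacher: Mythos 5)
Your proof is correct and takes essentially the same route as the paper's: the paper organizes the identical chain of two-dimensional slice reductions (Fubini, then the $d=2$ hypothesis applied to a shifted integrand $f(\cdot+a)$) as an induction on $d$, while you simply unroll it into $d-1$ successive merges. The only caveat — that the two-dimensional hypothesis must be invoked for the whole family of increasing functions $s\mapsto f(s+a)$ rather than for $f$ alone — is present in the paper's own argument as well and is harmless in the intended applications.
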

\begin{proof} Assume the result is true for  $d\ge 2$. Let $\overline{c}\in\mathbb{R}^{d+1}$ and let $R > 0$. We write
$\overline{c} = (c, b)$, where $c\in \mathbb{R}^d$,
$\overline{x} = (x, y)$,  $x\in \mathbb{R}^d$, and $x = (x_1,\hat{x}),
\hat{x}=(x_2,\dots, x_d) \in \mathbb{R}^{d-1}$. From Fubini's Theorem,  induction, and the assumption for $d=2$, we get
\begin{equation*}
  \int_{B(\overline{c},R)}f(\|\overline{x}\|_p^p)d\overline{x} =
\int_{b-R}^{b+R} \int_{|x_1-c_1|^p+\ldots
  +|x_{d}-c_{d}|^p\le R^p-|y-b|^p} f(\|x\|_p^p + |y|^p) dx dy
\end{equation*}
\begin{equation*}
 \le \int_{b-R}^{b+R} \int_{|x_1-\|c\|_p|^p+ |x_2|^p+\ldots
  +|x_{d}|^p\le R^p-|y-b|^p} f(\|x\|_p^p + |y|^p) dx dy
  \end{equation*}
\begin{equation*}
=  \int_{\|\hat{x}\|_p^p\le R^p}
\left(\int_{|x_1-\|c\|_p|^p+ |y-b|^p \le  R^p -\|\hat{x}\|_p^p} f(\|\hat{x}\|_p^p + |x_1|^p+ |y|^p) dx_1 dy\right) d\hat{x}
\end{equation*}
\begin{equation*}
\le  \int_{\|\hat{x}\|_p^p\le R^p}
\left(\int_{|x_1-\|\overline{c}\|_p|^p+ |y|^p \le  R^p -\|\hat{x}\|_p^p} f(\|\hat{x}\|_p^p + |x_1|^p+ |y|^p) dx_1 dy\right) d\hat{x}
\end{equation*}
\begin{equation*}
 =  \int_{B(\|\overline{c}\|_p e_1,R)}f(\|\overline{x}\|_p^p)d\overline{x}.
 \end{equation*}
\end{proof}

 For euclidean balls, optimality of $e_1$, or any other vector (in every dimension), follows
from symmetry. Now let $d = 2$. For $\ell_1$ balls, optimality of $e_1$ follows from
the optimality of $(1,1)$ for $\ell_\infty$ balls, since each unit ball and its corresponding norm can be obtained from the other via a rotation
and a dilation. And in this case $e_1$ is strictly better than nearby
vectors. We have not been able to prove in a direct way
the optimality of $e_1$ for other $\ell_p$
balls,  $1 < p < 2$,  even for explicit moduli (approximation
arguments seem to yield very limited results).

It is well known, and we use it below in the cases $p=1,2$, that when
$1\le p <\infty$ the measure of the unit ball concentrates near the
vectors with norm one and first coordinate equal to zero, that is,
near the ``vertical equator" perpendicular to $e_1$:
$\{x\in\mathbb{R}^d: x_1 = 0 \mbox{ and }\|x\|_p =1\}$. Indeed,
 the sections of the unit ball perpendicular to $e_1$ are
balls in $\mathbb{R}^{d-1}$, and a small decrease in the radius $r$
causes a large decrease in mass whenever $d$ is high, since
 Lebesgue measure in $\mathbb{R}^{d-1}$ scales
like $r^{d-1}$. It follows that the measure of the unit ball
concentrates on the sections of maximal radius, i.e., when
$x_1\approx 0$.
Likewise, the measure of the unit ball concentrates near the unit sphere.
Thus we have concentration near the vertical equator,
since the intersection of two very large subsets of the unit ball
must be large. Of course,  when
$p=\infty$ this concentration near the vertical equator
does not takes place.

Let $\mathbb{B}_p^d$ be the
$\ell_p$ unit ball
$\{x\in\mathbb{R}^{d}:\;\left\|x\right\|_p\leq1\}$ (for
convenience here we take balls to be closed), let $\mathbb{S}_p^{d-1}$ be
the corresponding unit sphere, given by
$\{x\in\mathbb{R}^{d}:\;\left\|x\right\|_p=1\}$, and let
$|\mathbb{B}_p^{d}|$ and $|\mathbb{S}_p^{d-1}|$ be their respective $d$
and $d-1$ volumes. When considering radii $r$
not necessarily equal to one, we write $\mathbb{B}_p^d(r)$ and $\mathbb{S}_p^{d-1}(r)$.

\begin{theorem}\label{2balls} Let $d\ge 2$, let $M$ be the uncentered maximal operator associated to balls defined by the $\ell_2$ norm,
i.e., to euclidean balls, and let $\omega$
 be a
 modulus of continuity. Then
\begin{equation}\label{2ballsformM}
\left\|  M\right\|
_{\operatorname{Op}(\omega)} =   \sup_{t>0}   \inf_{1/2\le
R\leq1}\frac{(d-1)\Gamma (1 + d/2)}{\omega\left( t\right)
\sqrt{\pi} \Gamma (1/2 + d/2)}
\times
\end{equation}
\begin{equation*}
 {\displaystyle\int_{-1}^{1}}
{\displaystyle\int_{0}^{(1-u_1^2)^{1/2}}}
\omega\left(t((1 - R + R u_1)^2+ R^2\rho^2)^{1/2}\right)
\rho^{d-2}d\rho du_1.
\end{equation*}
Furthermore, if we select the same modulus $\omega$ in all dimensions,
we have
\begin{equation}\label{limit2lo}
\sup_{t>0}\left\{\frac{\omega\left(
2^{-\frac{1}{2}} t\right)}{\omega\left( t\right)} \right\} \le \liminf_{d\to\infty} \left\|  M\right\|
_{\operatorname{Op}(\omega)}
\end{equation}
and
\begin{equation}\label{limit2hi}
 \limsup_{d\to\infty} \left\|  M\right\|
_{\operatorname{Op}(\omega)} \le
\inf_{r > 1}\sup_{t>0}\left\{\frac{\omega\left(
2^{-\frac{1}{2}} r t\right)}{\omega\left( t\right)} \right\}.
\end{equation}
Under the additional assumption that $\omega$ is concave, the
 limit exists, it is equal to the left hand side of (\ref{limit2lo}), and bounds $\left\|  M\right\|
_{\operatorname{Op}(\omega)}$ uniformly in $d$: Given $d \ge 1$,
\begin{equation}\label{upper2}
\left\|  M\right\|
_{\operatorname{Op}(\omega)}\le
\sup_{t>0}\left\{\frac{\omega\left(
2^{-\frac{1}{2}} t\right)}{\omega\left( t\right)} \right\}
=  \lim_{d\to\infty} \left\|  M\right\|
_{\operatorname{Op}(\omega)}.
\end{equation}
In particular, for the H\"older and Lipschitz classes we obtain
$$\left\|  M\right\|
_{\operatorname{Op}(\alpha)}\le 2^{-\frac{\alpha}{2}} \mbox{ \ \ \ for all }d\ge 1,
\mbox{  and \ \ \ }\lim_{d\to\infty}\left\|  M\right\|
_{\operatorname{Op}(\alpha)} = 2^{-\frac{\alpha}{2}}.$$
\end{theorem}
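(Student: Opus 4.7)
My plan is to start from Corollary \ref{opnormequa} with $\|\cdot\|=\|\cdot\|_2$. The full rotational symmetry of euclidean balls lets me fix $v=e_1$ without loss of generality. For any $c$ with $\|e_1-c\|_2=R\le 1$ one has $c_1\ge 1-R\ge 0$; applying Lemma \ref{fijainfimo} with $p=2$, $f(\cdot)=\omega(t\sqrt{\cdot})$ and $a=1-R$ shows that the inner infimum is attained at $c=(1-R)e_1$. A direct differentiation of $\|(1-R)e_1+Ru\|_2^2=(1-R(1-u_1))^2+R^2\|\hat u\|_2^2$, using $\|\hat u\|_2^2\le 1-u_1^2$, shows that this radical is decreasing in $R$ on $(0,1/2]$ for every $u\in B(0,1)$, so the infimum may be restricted to $R\in[1/2,1]$. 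Splitting $u=(u_1,\hat u)$, integrating in $\hat u\in\mathbb{R}^{d-1}$ in polar coordinates, and simplifying the Gamma ratio via $\Gamma((d-1)/2)=2\Gamma((d+1)/2)/(d-1)$ produces exactly the constant $(d-1)\Gamma(1+d/2)/(\sqrt{\pi}\,\Gamma((d+1)/2))$ and formula (\ref{2ballsformM}).

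For the asymptotic analysis, the engine is concentration of measure on $\mathbb{B}_2^d$. If $u$ is uniform on $\mathbb{B}_2^d$, then $E[u_1]=0$, $E[u_1^2]=1/(d+2)$, $E[\|u\|_2^2]=d/(d+2)$, and both $\operatorname{Var}(u_1)$ and $\operatorname{Var}(\|u\|_2^2)$ are $O(1/d)$. Hence for $x:=(1-R)e_1+Ru$ one has $E[\|x\|_2^2]=(1-R)^2+R^2 d/(d+2)\ge 1/2-O(1/d)$ and $\operatorname{Var}(\|x\|_2^2)=O(1/d)$, uniformly in $R\in[1/2,1]$. Chebyshev then yields, for every $\eta>0$, $P(\|x\|_2\ge\sqrt{1/2-\eta})\to 1$ as $d\to\infty$ uniformly in $R\in[1/2,1]$. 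Feeding this into (\ref{2ballsformM}) and using monotonicity of $\omega$ gives $\|M\|_{\operatorname{Op}(\omega)}\ge(1-o(1))\,\omega(t\sqrt{1/2-\eta})/\omega(t)$; taking $\sup_t$, then $d\to\infty$, then $\eta\to 0$ using continuity of $\omega$ produces (\ref{limit2lo}).

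For (\ref{limit2hi}), fix $r>1$ and specialize to $R=1/2$ (so $\|x\|_2\le 1$ trivially). The same concentration bound gives $P(\|x\|_2>r/\sqrt{2})=o(1)$ as $d\to\infty$. Splitting the integral accordingly and bounding $\omega(t\|x\|_2)\le\omega(t)$ on the exceptional set yields
\[
\frac{1}{|B|}\int_B\omega(t\|x\|_2)\,dx\le\omega(rt/\sqrt{2})+o(1)\,\omega(t),
\]
with $o(1)$ uniform in $t$; dividing by $\omega(t)$, taking $\sup_t$, then $\limsup_d$, then $\inf_{r>1}$ gives (\ref{limit2hi}).

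For the concave case, I would bypass concentration via a double application of Jensen's inequality. Concavity of $\omega$ gives $\int_B\omega(t\|x\|_2)\,dx/|B|\le\omega\bigl(t\int_B\|x\|_2\,dx/|B|\bigr)$, and concavity of $\sqrt{\cdot}$ gives $\int_B\|x\|_2\,dx/|B|\le\sqrt{(1-R)^2+R^2 d/(d+2)}$. Elementary calculus minimizes this over $R$ at $R^{\ast}=(d+2)/(2d+2)\in[1/2,1]$, with minimum value $\sqrt{d/[2(d+1)]}<1/\sqrt{2}$; since $\omega$ is nondecreasing, this yields the uniform bound in (\ref{upper2}), and combined with (\ref{limit2lo}) yields the limit. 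For $\omega(t)=t^\alpha$ with $\alpha\in(0,1]$ the modulus is concave and $\omega(t/\sqrt{2})/\omega(t)=2^{-\alpha/2}$ for all $t$, giving the Hölder/Lipschitz claim for $d\ge 2$; the case $d=1$ follows from (\ref{1d}) since $\min_s (1+s^{1+\alpha})/[(1+\alpha)(1+s)]\le 1/(1+\alpha)\le 2^{-\alpha/2}$. The chief technical obstacle, I expect, will be enforcing uniformity in $R$ of the concentration estimate and justifying the exchange of $\sup_t$ with the limits in $d$ and $\eta$; the Jensen route for the concave case is cleaner but of course does not cover general moduli.
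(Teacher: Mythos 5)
Your proposal is correct and follows the paper's overall skeleton (specialize Corollary \ref{opnormequa}, reduce to $c=(1-R)e_1$ with $R\in[1/2,1]$, Fubini plus polar coordinates on vertical sections for the integral formula, concentration of measure for the asymptotics), but you carry out the three key analytic steps differently. For the reduction to $c=(1-R)e_1$ you invoke Lemma \ref{fijainfimo} with $p=2$, whereas the paper translates the minimizing ball into $\mathbb{B}_2^d$ along the ray through its center via Anderson's theorem and then rotates; both are Anderson-based and equally valid (the paper itself uses Lemma \ref{fijainfimo} for exactly this purpose in the $\ell_1$ case). For the asymptotics, the paper works with the explicit ``vertical equator'' set $\{|u_1|<\delta,\ 1-\delta<\|y\|_2\le 1\}$ and computes the pointwise extremum of $R\mapsto\|(1-R)e_1+Ru\|_2$, while you run a second-moment/Chebyshev argument on $\|x\|_2^2$ directly; your route has the advantage that the uniformity in $R\in[1/2,1]$ you flagged as the chief obstacle is automatic, since $E\|x\|_2^2=(1-R)^2+R^2d/(d+2)$ and $\operatorname{Var}(\|x\|_2^2)=O(1/d)$ uniformly in $R$, and the interchanges of $\sup_t$ with the limits in $d$ and $\eta$ go through because the exceptional probabilities do not depend on $t$ and $\omega$ is continuous and nondecreasing. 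For the concave case your double Jensen (first on $\omega$, then on $\sqrt{\cdot}$) is cleaner than the paper's symmetrization inequality $\omega(\sqrt{x-y})+\omega(\sqrt{x+y})\le 2\omega(\sqrt{x})$, and in fact yields the slightly sharper intermediate bound $\omega\bigl(t\sqrt{d/(2(d+1))}\bigr)$. The only loose end is that (\ref{upper2}) is asserted for all $d\ge1$ and all concave $\omega$, and you treat $d=1$ only for power moduli; but your own double-Jensen argument applied to $\tfrac12\int_{-1}^1\omega(t|1-R+Ru|)\,du$ gives $\omega(t/2)\le\omega(t/\sqrt2)$ at $R=3/4$ (the paper instead uses (\ref{trivial1d})), so this closes immediately.
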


In order to give an idea about the size of the constant term (for $d$ fixed)
in (\ref{2ballsformM}), we point out that
\begin{equation}\label{ratio}
\left(\frac{d}{2}\right)^{1/2}
\le \frac{\Gamma (1 + d/2)}{\Gamma (1/2 + d/2)}
\le \left(\frac{d+1}{2}\right)^{1/2}.
\end{equation}
This is an easy consequence of the log-convexity of the $\Gamma$
function (cf. Exercise 5, pg. 216 of \cite{Web}). Note also
that if instead of taking the infimum in the right hand side
of (\ref{2ballsformM}) we just set $R=1$, we are averaging over
the whole unit ball, that is, we are acting as in the proof of
Corollary \ref{univLipbounds}. And indeed, if we change to polar
coordinates and integrate we recover the bound $d/(d+1)$. So here, a better choice of $R$ leads to lower asymptotic bounds. We shall see
that in fact, when $d\to\infty$ the optimal $R= R(d)$ approaches $1/2$, as intuition
suggests. The $s$-dimensional Hausdorff measure on $\mathbb{R}^d$
is denoted by $\mathcal{H}^s$.

\begin{proof} As in the case of Theorem \ref{cubes},
 it is enough to consider
balls contained in $\mathbb{B}_2^d$. To see this, fix any unit
vector $v$, and let $B(c,R)$ be a minimizing ball for $v$ in
(\ref{opnormequa}). Suppose $B(c,R)$ has points outside
$\mathbb{B}_2^d$. Translating $B(c,R)$ towards the origin along the
ray $\{t c: t \ge 0\}$ determined by the vector $c$, so that the displaced ball
$B(c^\prime, R)$ is fully contained in $\mathbb{B}_2^d$ and tangent
to the unit sphere, leads to
$$ {\displaystyle\int_{B(0,1)}}
\omega\left(  t\left\|c^\prime + Ru\right\|_2  \right)  du \le {\displaystyle\int_{B(0,1)}}
\omega\left(  t\left\|  c+Ru\right\|_2  \right)  du$$
by Anderson's Theorem (see Remark \ref{anderson}). It may well happen that after the
translation $v \notin B(c^\prime, R)$. If so, rotate
$B(c^\prime, R)$ about the origin to make its new center lie in
the segment
$[0,v]$. Since this does not change the value of the integral,
we conclude that  it suffices to consider
balls contained in $\mathbb{B}_2^d$. Again by rotational
symmetry we may take $v$ to be $e_1$, so it is enough to consider
centers $c= (1 - R) e_1$  and radii $R$, with $0\le R\le1$.
Hence, by (\ref{equalityopnorm})  we have
\begin{equation}\label{defina}
\left\|  M\right\|
_{\operatorname{Op}(\omega)} =   \sup_{t>0}   \inf_{0\le
R\leq1} \frac{1}{\omega(t)}
{\displaystyle\int_{\mathbb{B}_2^d}} \omega\left(t\left\|
(1-R) e_1+ R u\right\|_2  \right)  \frac{du}{\left|  \mathbb{B}_2^d\right|}.
\end{equation}
 Note that the infimum in  (\ref{defina}) is attained when $R\in[1/2, 1]$. In fact, we claim that
the function $f_2 (R) := \left\|(1-R) e_1+ R u\right\|_2^2$ is decreasing
when $R\leq1/2$, so  the
minimum must indeed occur on $1/2\le R\le 1$. Differentiating
$f_2(R) = (1-R + R u_1)^2  + R^{2} \sum_{i=1}^d u_i^2  - R^2 u_1^2$,
and using $\sum_{i=1}^d u_i^2\le 1$ together with $R \le 1/2 $,
the claim follows.

Given $u\in \mathbb{B}_2^d$, we write
$u =(u_1, y)$, where $u_1\in\mathbb{R}$ and $y\in\mathbb{R}^{d-1}$.
Denote by $P_d$ the uniform probability on $\mathbb{B}_2^d$, so
$d P_d(u) = du/|\mathbb{B}_2^d|$.
To prove (\ref{limit2hi}) and (\ref{limit2lo}), fix $t > 0$, and note that by concentration of
measure near the vertical equator, for each
$\delta >0$  we have
\begin{equation}\label{2conc}
\lim_{d\to\infty} P_d(\{|u_1| <\delta, 1-\delta < \|y\|_2 \le 1\})=1.
\end{equation}
Of course, the weaker assertion
\begin{equation}\label{3conc}
\lim_{d\to\infty} P_d(\{|u_1| <\delta\})=1
\end{equation}
also holds.
Since  $\left\|
(1-R) e_1+ R u\right\|_2 \le 1$ for all $u\in \mathbb{B}_2^d$ and
all  $R\in [1/2,1]$,
\begin{equation}\label{leftover}
  \inf_{1/2\le
R\leq1}
{\displaystyle\int_{\mathbb{B}_2^d\setminus \{|u_1|<\delta\}}} \frac{\omega\left(t\left\|
(1-R) e_1+ R u\right\|_2  \right) }{\omega(t)}
 d P_d(u) \le P_d(\{|u_1| \ge\delta\}).
\end{equation}
Next, note that on
$\mathbb{B}_2^{d}\cap \{|u_1| <\delta\}$, for every $d$ and
every $R\in [1/2, 1]$ we have
\begin{equation}\label{easyupbd}
 \left\|
(1-R) e_1 + R u\right\|_2  \le \left(
(1 - R + \delta)^2 + R^2 \right)^{1/2}.
\end{equation}
The unique minimum of $h_1 (R) := (1 - R + \delta)^2 + R^2$ on $[1/2, 1]$  is attained at
$R_\delta = (1 + \delta)/2$, and there $h_1 (R_\delta) = (1 + \delta)^2/2$. Thus, \begin{equation}\label{easyupbd2}
 \left\|
(1-R_\delta ) e_1 + R_\delta u\right\|_2  \le 2^{-1/2}(1 + \delta),
\end{equation}
so splitting $\mathbb{B}_2^{d}$ into the regions where $|u_1| <\delta$
and $|u_1| \ge \delta$ we obtain
\begin{equation}\label{andright}
\inf_{1/2 \le R \le 1} \int_{\mathbb{B}_2^d} \frac{\omega\left(t\left\|
(1-R) e_1 + R u\right\|_2  \right)}{\omega(t)}   dP_2(u)
\le  \frac{\omega\left( t\left( 2^{-1/2} (1 +\delta)\right) \right)}{\omega (t)}
+P_d(\{|u_1| \ge\delta\}).
\end{equation}
Taking on both sides
of the preceding inequality first $\sup_{t>0}$, second, $\limsup_{d\to \infty}$, and third, $\inf_{\delta >0}$, from (\ref{defina}) (with $R\in[1/2,1]$) we get
\begin{equation}\label{andright3}
\limsup_{d\to\infty} \left\|  M\right\|
_{\operatorname{Op}(\omega)}
\le  \inf_{\delta >0} \sup_{t > 0} \frac{\omega\left(
2^{-\frac{1}{2}} (1 + \delta) t\right)}{\omega\left( t\right)}.
\end{equation}
This proves (\ref{limit2hi}).

 The argument used to obtain (\ref{limit2lo}) is similar. Fix
$u = (u_1,y)\in\mathbb{R}^d$ and note that on $\mathbb{R}$ the
function
$$h_2(R) := (1-R +  R u_1 )^2 + R^2 \|y\|_2^2
= \left\| (1-R) e_1 + R u\right\|_2^2
$$
achieves its  unique minimum at $R_u=(1-u_1)/[(1-u_1)^2+\|y\|_2^2]$,
where it takes the value
$h_2(R_u) = \|y
\|_2^2/[(1-u_1)^2+\|y\|_2^2]$. Thus, for $0 < \delta < 1$ and  $u\in
\mathbb{B}_2^{d}\cap \{|u_1| <\delta, 1-\delta < \|y\|_2 \le 1\}$
 we have
\begin{equation*}
  \frac{1-\delta}{\sqrt{(1+\delta)^2+ 1}}\le
  \frac{\|y\|_2}{\sqrt{(1-u_1)^2+\|y\|_2^2}}\le\|(1-R)e_1+Ru\|_2,
\end{equation*}
from which it follows that
\begin{equation}\label{left1}
\sup_{t > 0}  P_d(\{|u_1| <\delta, 1-\delta < \|y\|_2 \le 1\})
\frac{\omega\left( t\left((1+\delta)^2+1\right)^{-1/2}
(1-\delta)\right)}{\omega(t)}
\end{equation}
\begin{equation}\label{left11}
\le \sup_{t > 0}\inf_{1/2 \le R \le 1} \int_{\mathbb{B}_2^d}
\frac{\omega\left(t\left\| (1 - R) e_1 + R u\right\|_2
\right)}{\omega(t)}   dP_d(u) = \left\|  M\right\|
_{\operatorname{Op}(\omega)}.
\end{equation}
Now (\ref{left1}) increases with $d$, so taking the limit
inferior in (\ref{left1}) and (\ref{left11})
as $d\to\infty$, we get
\begin{equation}\label{left2}
\sup_{t > 0} \frac{\omega\left(  t\left((1+\delta)^2+1\right)^{-1/2}
(1-\delta) \right)}{\omega(t)} \le \liminf_{d\to\infty} \left\|
M\right\| _{\operatorname{Op}(\omega)}.
\end{equation}
Likewise, the left hand side of the preceding inequality is
decreasing in $\delta \in (0, 1)$, so (\ref{limit2lo}) follows by
 taking the supremum over $\delta$, interchanging it with the supremum
 over $t$, and letting $\delta\downarrow 0$.

  Next, write $u = (u_1,y)\in\mathbb{R}^d$ and $y = \rho\eta$, where
$\rho = \|y\|_2$ and $\|\eta\|_2 = 1$. By Fubini's Theorem
\begin{equation}\label{fubini}
\int_{\mathbb{B}_2^d} \omega\left(t\left\|
(1 - R)  e_1 + R u\right\|_2  \right)   du =
\int_{-1}^1\int_{\mathbb{B}_2^{d-1}((1-|u_1|^2)^{1/2})} \omega\left(t\left\|
(1 - R) e_1 + R  (u_1, y)\right\|_2  \right)  d y du_1.
\end{equation}
Using polar coordinates on
the vertical sections, or equivalently, by the coarea formula,  we get
\begin{equation*}
\int_{\mathbb{B}_2^{d-1}((1-u_1^2)^{1/2})} \omega\left(t\left\| (1 -
R) e_1 + R (u_1, y)\right\|_2  \right)  d y
\end{equation*}
\begin{equation*}
= \int_{0}^{(1-u_1^2)^{1/2}} \int_{\mathbb{S}_2^{d-2}(\rho)}
\omega\left((|(1 - R) +  R u_1|^2+ R^2\|y\|_2^2)^{1/2}\right)
 d{\mathcal{H}^{d-2}(y)} d \rho
\end{equation*}
\begin{equation*}
= (d-1) |\mathbb{B}^{d-1}_2| \int_{0}^{(1-u_1^2)^{1/2}}
\omega\left((|(1 - R) + R u_1|^2+ R^2\rho^2)^{1/2}\right) \rho^{d-2}
d \rho.
\end{equation*}
Since $|\mathbb{B}_2^{d}| =\frac{\pi^{d/2}}{\Gamma (1 + d/2)}$,  (\ref{2ballsformM}) follows from  (\ref{defina}) and the preceding equalities.

Suppose next that $\omega$ is concave. We show that the inequality
in  (\ref{upper2}) holds for every
$d \ge 1$.
When $d=1$ the result follows from (\ref{dcubes}) and
the concavity of $\omega$:
\begin{equation}\label{trivial1d}
\left\|  M\right\|  _{\operatorname{Op}(\omega)}\le \sup_{t>0}  \dfrac{1}{\omega\left(
t\right)} {\displaystyle\int_{0}^1}\omega\left(  t
x \right)  dx
\le \sup_{t>0}  \dfrac{1}{\omega\left(
t\right)} \omega\left( t {\displaystyle\int_{0}^1}
x dx\right)  =  \sup_{t>0}  \dfrac{\omega\left( t/2 \right)}{\omega\left(t\right)}.
\end{equation}
Let $d > 1$ and let $t > 0$ be fixed. If $\omega\left(z\right)$ is concave, then so is
$\omega\left(t z^{1/2}\right)$. Thus, given $x \ge y \ge 0$, we have
 $\omega\left((x-y)^{1/2}\right)
+\omega\left((x+y)^{1/2}\right) \leq 2\omega\left(x^{1/2}\right)$.
We write the integral appearing in (\ref{2ballsformM}), together with
its constant term, as
\begin{equation*} I= {\displaystyle\int_{-1}^{1}}
{\displaystyle\int_{0}^{(1-u_1^2)^{1/2}}}
\omega\left(t((1- R) ^2 + R^2 (u_1^2 + \rho^2) + 2R(1-R) u_1)^{1/2}\right)
d\mu
\end{equation*}
where
$$
d\mu =  \frac{|(d-1)\mathbb{B}_2^{d-1}|}{\left|\mathbb{B}_2^{d}\right| } \rho^{d-2} d\rho  d u_1
$$
defines a probability on $\mathbb{B}^2_2\cap \{\rho \ge 0\}$. Actually
we get a probability $\mu_d$ for each dimension $d$, but
since this is not relevant in the following argument we omit
the reference to $d$ in the notation. Now from concavity and the fact that $u_1^2 + \rho^2\le 1$ we get
\begin{equation*}
 I\le 2 {\displaystyle\int_{0}^{1}}
{\displaystyle\int_{0}^{(1-u_1^2)^{1/2}}}
\omega\left(t((1 - R)^2 + R^2 (u_1^2 + \rho^2))^{1/2}\right)
d\mu \le \omega\left(t((1-R)^2 + R^2)^{1/2}\right) .
\end{equation*}
Using $\inf_{1/2\le R\le 1} \omega\left(t((1-R)^2 + R^2)^{1/2}\right)
=
\omega\left( 2^{-\frac{1}{2}} t\right)$ we obtain (\ref{upper2}).

Regarding the assertion about the H\"older and Lipschitz classes, for $d>1$ the
result follows from  (\ref{upper2}) with $\omega (t) = t^\alpha$.
And for $d=1$, the upper bound is immediate from (\ref{trivial1d}).
\end{proof}

\begin{remark} Consider the Lipschitz functions on $\mathbb{R}^d$.
By the preceding theorem, for every $d$ we have    $\|M\|_{\operatorname{Op(1)}} \le 2^{-1/2}$
on
$\operatorname{Lip}(\mathbb{R}^d,\|\cdot\|_2)$. However,
 on
$\operatorname{Lip}(\mathbb{R}^d,\|\cdot\|_\infty)$,
$\|M\|_{\operatorname{Op(1)}} > (d-1)/(d+1)$, by Corollary \ref{alldinftybounds}. Since $2^{-1/2} < 5/7$,
{\em any} optimal constant in the $\ell_\infty$ case, with $d\ge 6$,
is strictly larger than {\em all} the optimal constants in the Euclidean
case ($d = 1,2,3,\dots$). This illustrates the fact that using
different norms on $\mathbb{R}^d$ may result in obtaining very different best constants.
But the opposite can also happen: Best constants are identical for
the $\ell_1$ and $\ell_\infty$ norms, as we shall prove by showing
that the corresponding integral formulas for $\|M\|_{\operatorname{Op(\omega)}}$ are actually the same.
\end{remark}

Even though we are using different
norms  to define maximal operators and their associated moduli
of continuity, we adopt the convention that the $s$-dimensional
Hausdorff measure $\mathcal{H}^s$ is always defined via the Euclidean $\ell_2$ metric, and normalized by the factor
$\pi^{s/2}/\Gamma(1+s/2)$, so if $s=d$ is a positive natural number, the cube of sidelength
1 has Hausdorff $d$ measure 1. A consistent definition is required in order
to use Fubini's Theorem, or more generally, the coarea formula.
For example, by our convention the length of the polygonal curve $\mathbb{S}_1^{1}$ is $4\sqrt 2$,
and not $8$, which would be obtained if length were computed using
the $\ell_1$ distance
 instead of the Euclidean distance.

\begin{theorem}\label{1balls} Let $d\ge 2$, let $M$ be the uncentered maximal operator associated to balls defined by the $\ell_1$ norm, i.e., to cross-polytopes, and let $\omega$
 be a
 modulus of continuity. Then
\begin{equation}\label{1ballsformM}
\left\|  M\right\|
_{\operatorname{Op}(\omega)}=\sup_{t>0}   \inf_{1/2\le
R\leq 1}\frac{d(d-1)}{2 \omega\left( t\right)}
{\displaystyle\int_{-1}^{1}}
{\displaystyle\int_{0}^{1-|u_1|}}
\omega\left(t(|(1 - R) + R u_1|+ R\rho)\right)
\rho^{d-2}d\rho du_1.
\end{equation}
If we choose the same modulus $\omega$ in all dimensions, then
we have
$\lim_{d\to\infty} \left\|  M\right\|
_{\operatorname{Op}(\omega)} = 1$.
\end{theorem}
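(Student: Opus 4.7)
\emph{Proof plan.} The argument follows the framework of Theorem \ref{2balls}, with the $\ell_2$ geometry replaced by the $\ell_1$ one. Starting from Corollary \ref{opnormequa}, the Anderson argument used in the proof of Theorem \ref{2balls} (cf.\ Remark \ref{anderson}) restricts attention to averaging balls $B(c,R)\subset\mathbb{B}_1^d$; Lemmas \ref{fijainfimo} and \ref{optimald} with $p=1$, together with another application of Anderson, then force the optimal unit vector to be $v=e_1$ and the center to be $c=(1-R)e_1$ with $R\in(0,1]$. The two-dimensional hypothesis of Lemma \ref{optimald} is immediate here: in $\mathbb{R}^2$ the $\ell_1$ unit ball is a rotated and rescaled $\ell_\infty$ unit ball, and the rotation sends the optimal vertex $(1,1)$ of Theorem \ref{cubes} to $e_1$. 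Writing $\hat u=(u_2,\ldots,u_d)$, the identity
\[
\|(1-R)e_1+Ru\|_1 = |1-R(1-u_1)| + R\|\hat u\|_1,
\]
together with $\|\hat u\|_1\le 1-|u_1|\le 1-u_1$, shows this quantity is nonincreasing in $R$ for $R\le 1/2$, so the infimum is attained in $[1/2,1]$.

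\emph{Deriving (\ref{1ballsformM}).} Applying Fubini with respect to $u_1$, the inner integral is over the $(d-1)$-dimensional $\ell_1$ ball of radius $1-|u_1|$. Passing to the single radial variable $\rho=\|\hat u\|_1$ and using that the derivative of $|\mathbb{B}_1^{d-1}(r)|=(2r)^{d-1}/(d-1)!$ with respect to $r$ equals $2^{d-1}r^{d-2}/(d-2)!$, then dividing by $|\mathbb{B}_1^d|=2^d/d!$, all constants collapse to the factor $d(d-1)/2$ appearing in front of (\ref{1ballsformM}).

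\emph{The asymptotic limit.} For the statement $\lim_{d\to\infty}\|M\|_{\operatorname{Op}(\omega)}=1$, I would use concentration of measure near the vertical equator of $\mathbb{B}_1^d$. A direct slicing calculation gives $P_d(|u_1|<\delta)=1-(1-\delta)^d$, and conditionally on $|u_1|<\delta$ the probability of $\|\hat u\|_1\le 1-2\delta$ is at most $((1-2\delta)/(1-\delta))^{d-1}$; hence the event $E_\delta:=\{|u_1|<\delta,\ \|\hat u\|_1>1-2\delta\}$ satisfies $P_d(E_\delta)\to 1$ as $d\to\infty$. Using $|a|\ge a$, on $E_\delta$ one has
\[
\|(1-R)e_1+Ru\|_1 \ge 1-R(1-u_1) + R\|\hat u\|_1 \ge 1 + R(u_1-2\delta) \ge 1-3\delta
\]
uniformly in $R\in[1/2,1]$, so the normalized integral in (\ref{1ballsformM}) is bounded below by $\omega(t(1-3\delta))\,P_d(E_\delta)$. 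Letting $d\to\infty$, then $\delta\downarrow 0$, and invoking continuity of $\omega$ yields $\liminf_{d\to\infty}\|M\|_{\operatorname{Op}(\omega)}\ge 1$; combined with Kinnunen's Theorem \ref{kin}, which gives the universal upper bound $\|M\|_{\operatorname{Op}(\omega)}\le 1$, this produces the limit. The main delicate step will be the two concentration estimates on $\mathbb{B}_1^d$; once those are in hand, the algebraic lower bound $\|c+Ru\|_1\ge 1-3\delta$ is uniform in $R\in[1/2,1]$, which is exactly what is needed.
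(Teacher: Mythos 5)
Your proposal is correct and follows essentially the same route as the paper: equality in (\ref{1ballsformM}) is obtained from Corollary \ref{opnormequa} by combining Lemma \ref{optimald} (with the two-dimensional rotated-square observation) for the upper bound and Lemma \ref{fijainfimo} for the lower bound, and the limit follows from Kinnunen's bound together with concentration near the vertical equator. Two minor remarks: your preliminary claim that Anderson's theorem restricts attention to balls contained in $\mathbb{B}_1^d$ is not needed (Lemmas \ref{fijainfimo} and \ref{optimald} already pin down $v=e_1$ and $c=(1-R)e_1$, and the resulting ball is automatically contained in $\mathbb{B}_1^d$) and would in any case be harder to justify here than in the Euclidean case, since the rotation step of that argument is unavailable for the $\ell_1$ norm. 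On the other hand, computing the radial factor from the derivative of $\left|\mathbb{B}_1^{d-1}(r)\right|=(2r)^{d-1}/(d-1)!$ is a clean substitute for the paper's coarea-formula computation with its Hausdorff-measure normalization, and your explicit concentration estimates on $\mathbb{B}_1^d$ fill in a step the paper only asserts.
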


\begin{proof}
Setting $c=(1-R)v$ in  (\ref{equalityopnorm}),  we get
\begin{equation*}
 \left\|  M\right\|
_{\operatorname{Op}(\omega)}\le
\sup_{t>0}\sup_{\left\{v\in\mathbb{R}^{d}:\;\left\|  v\right\|_1 =
1\right\}} \inf_{0<R<1}\dfrac{1}{\omega(t)}
{\displaystyle\int_{B(0,1)}} \omega\left( t\left\| (1-R)v+Ru\right\|_1
\right)  \frac{du}{\left| B(0,1)\right|}.
\end{equation*}
Optimality of $e_1$ in dimension 2, follows from the fact that
in this case cross-polytopes are just rotated squares, and for $d > 2$, by Lemma
\ref{optimald}. Thus
\begin{equation}\label{cotasup}
  \left\|  M\right\|
_{\operatorname{Op}(\omega)}\le
\sup_{t>0}\inf_{0<R<1}\dfrac{1}{\omega(t)}
{\displaystyle\int_{B(0,1)}} \omega\left( t\left\|
(1-R)e_1+Ru\right\|_1 \right)  \frac{du}{\left| B(0,1)\right|}.
\end{equation}
On the other hand, setting $v=e_1$  in  (\ref{equalityopnorm})
yields
\begin{equation}\label{cotainf}
 \left\|  M\right\|
_{\operatorname{Op}(\omega)}\ge \sup_{t>0}
\inf_{\left\{c\in\mathbb{R}^{d},1>R>0 :\left\|  e_1-c\right\|_1 =
R\right\}}\dfrac{1}{\omega(t)}
{\displaystyle\int_{B(0,1)}} \omega\left( t\left\| c+Ru\right\|_1
\right)   \frac{du}{\left| B(0,1)\right|}.
\end{equation}
It follows from Lemma \ref{fijainfimo}  that, given
 $t$ and $R$, the infimum is attained in the last
inequality  when
$c=(1-R)e_1$. Hence
\begin{equation*}
 \left\|  M\right\|
_{\operatorname{Op}(\omega)} \ge
\sup_{t>0}\inf_{0<R<1}\dfrac{1}{\omega(t)}
{\displaystyle\int_{\mathbb{B}_1^d}} \omega\left( t\left\|
(1 - R) e_1+ R u\right\|_1 \right)   \frac{du}{\left| B(0,1)\right|},
\end{equation*}
and thus we have equality.

  Next, write $u = (u_1,y)\in\mathbb{R}^d$, and $y = \rho\eta$, where
$\rho = \|y\|_1$ and $\|\eta\|_1 = 1$.
By Fubini's Theorem
\begin{equation}\label{fubini}
\int_{\mathbb{B}_1^d} \omega\left(t\left\|
(1 - R) e_1 + R u\right\|_1  \right)   du =
\int_{-1}^1\int_{\mathbb{B}_1^{d-1}(1-|u_1|)} \omega\left(t\left\|
(1 - R) e_1 + R  (u_1, y)\right\|_1  \right)  d y du_1.
\end{equation}
Next we handle the vertical
sections. As a reminder,  we refer the reader to \cite{Fe}, pp. 248-250, or \cite{EG}, pp. 117-119, for basic information on  the coarea formula
\begin{equation}\label{coarea}
\int_{\mathbb{R}^{d-1}} g(y) |Jf(y)| dy =
\int_{\mathbb{R}} \int_{\{f^{-1}(t)\}} g (y)  d{\mathcal{H}^{d-2}(y)} d t.
\end{equation}
Let the Lipschitz function $f(y)$ be $\|y\|_1$.
By definition,
$|Jf(y)|=\sqrt{\operatorname{det} df(y) df(y)^t}$, so for every 
$y$ with no coordinate equal to zero,
a computation shows that $|Jf(y)| = \sqrt{d-1}$. Thus, this is the
case  on almost all
$\mathbb{R}^{d-1}$.  Set
$g(y) = 1/|Jf(y)|$ and use (\ref{coarea}) to obtain\begin{equation*}
|\mathbb{B}^{d-1}_1| = \int_{\mathbb{B}^{d-1}_1}   d y
=
\int_{0}^{1} \int_{\mathbb{S}^{d-2}_1(\rho)} \frac{1}{\sqrt{d-1}} d \mathcal{H}^{d-2}(y) d \rho =
\end{equation*}
\begin{equation*}
=
\frac{|\mathbb{S}^{d-2}_1|}{\sqrt{d-1}}\int_{0}^{1} \rho^{d-2} d \rho
 =   \frac{|\mathbb{S}_1^{d-2}|}{(d-1)\sqrt{d-1}}.
\end{equation*}
From this equality and the coarea formula (once more), with
$$
g(y) = \frac{\omega\left(t(|(1 - R) + R u_1|+R \|y\|_1)\right)}{|Jf(y)|},
$$
we get
\begin{equation*}
\int_{\mathbb{B}_1^{d-1}(1-|u_1|)} \omega\left(t\left\|
(1 - R) e_1 + R  (u_1, y)\right\|_1  \right)  d y
\end{equation*}
\begin{equation*}
=
\int_{0}^{1-|u_1|} \int_{\mathbb{S}_{1}^{d-2}(\rho)}
\omega\left(t(|(1 - R) + R u_1|+ R \rho)\right)\frac{d{\mathcal{H}^{d-2}(y)}}{\sqrt{d-1}} d \rho
\end{equation*}
\begin{equation*}
=
\frac{1}{\sqrt{d-1}}\int_{0}^{1-|u_1|}
\omega\left(t(|(1 - R) + R u_1|+R \rho)\right) \int_{\mathbb{S}_{1}^{d-2}} \rho^{d-2}
d{\mathcal{H}^{d-2}(y)} d \rho
\end{equation*}
\begin{equation*}
=
(d-1) |\mathbb{B}^{d-1}_1| \int_{0}^{1-|u_1|}
\omega\left(t(|(1 - R) +  R u_1|+ R \rho)\right)
\rho^{d-2} d \rho.
\end{equation*}
It is well known (and easy to compute) that $|\mathbb{B}_1^{d}| =\frac{2^{d}}{d!}$,  so $\frac{|\mathbb{B}_1^{d-1}|}{|\mathbb{B}_1^{d}|} =\frac{d}{2}$ and we obtain
\begin{equation*}
\left\|  M\right\|
_{\operatorname{Op}(\omega)}=\sup_{t>0}   \inf_{0\le
R\leq1}\frac{d(d-1)}{2 \omega\left( t\right)}
{\displaystyle\int_{-1}^{1}}
{\displaystyle\int_{0}^{1-|u_1|}}
\omega\left(t(| (1 - R) + R u_1|+ R \rho)\right)
\rho^{d-2}d\rho du_1.
\end{equation*}
 To see that  the infimum  is attained when $R\in[1/2,1]$, we note
 that on $[0, 1/2]$  the function $f_1(R) = 1 -R + R u_1 + R \rho$ has a negative derivative for every $\rho\in [0, 1 - |u_1|)$.
  Finally, $1$ is a uniform upper bound by Kinnunnen's Theorem,
and concentration of measure near the vertical equator $\{u\in\mathbb{R}^d:
u_1 = 0 \text{ and } \|u\|_1 = 1\}$ entails that
$\lim_{d\to\infty} \left\|  M\right\|
_{\operatorname{Op}(\omega)} = 1$.
\end{proof}

Observe, for instance, that unlike the case of cubes and the $\ell_\infty$ norm, it is
not clear from formula (\ref{1ballsformM}) that
$\left\|  M\right\|
_{\operatorname{Op}(\omega)}$ increases with the dimension.
However, this must be the case, since given an arbitrary
modulus,  constants for the $\ell_1$ and the
$\ell_\infty$ norms are equal in each dimension.
Of course this is trivial for $d=1$, and clear for $d= 2$, since
in this case cross-polytopes are  rotated squares. But when
$d > 2$ we have no justification to offer for this phenomenon,
other than the proof below.

\begin{theorem}\label{1ballsareeq} Fix $d\ge 1$ and select a modulus
of continuity $\omega$. Then $\left\|  M\right\|
_{\operatorname{Op}(\omega)}$ has exactly the same value
regardless of whether distances and the maximal operator are computed according to the $\ell_1$ norm, or to the $\ell_\infty$ norm.
\end{theorem}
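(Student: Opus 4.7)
Both Theorems \ref{cubes} and \ref{1balls} present $\left\|M\right\|_{\operatorname{Op}(\omega)}$ as $\sup_{t>0}\inf_{R\in[1/2,1]}A_p(t,R)/\omega(t)$, where $A_p(t,R)$ is the average of $\omega(t\|\cdot\|_p)$ over the $\ell_p$ ball, of radius $R$, tangent to the $\ell_p$ unit sphere at the chosen extremal vertex. Setting $s=2R-1$, this ball is $B_\infty=[-s,1]^d$ when $p=\infty$, and when $p=1$ it is $B_1$, the $\ell_1$ ball of radius $R$ centered at $(1-R)e_1$. The plan is to show $A_1(t,R)=A_\infty(t,R)$ pointwise in $(t,R)$, from which the theorem follows by taking $\sup_t\inf_R$. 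I will achieve this by showing that when $X_p$ is uniformly distributed on $B_p$, the scalar $\|X_p\|_p$ has a distribution on $[0,1]$ that does not depend on $p\in\{1,\infty\}$.

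\textbf{Computing the two cumulative distribution functions.} Let $\mu_p(r):=|\{x\in B_p:\|x\|_p\le r\}|$ for $r\in[0,1]$. For $p=\infty$, the level set is simply the subcube $[-\min(r,s),\min(r,1)]^d$, with measure $(\min(r,s)+\min(r,1))^d$, giving $\mu_\infty(r)=(2r)^d$ for $r\in[0,s]$ and $\mu_\infty(r)=(r+s)^d$ for $r\in(s,1]$. For $p=1$, slice by $x_1$ and set $y=(x_2,\dots,x_d)$; the conditions $x\in B_1$ and $\|x\|_1\le r$ become $\|y\|_1\le f_1(x_1):=R-|x_1-(1-R)|$ and $\|y\|_1\le f_2(x_1):=r-|x_1|$, so the section at height $x_1$ is a $(d-1)$-dimensional $\ell_1$ ball (when $\min(f_1,f_2)\ge 0$) of volume $\frac{2^{d-1}}{(d-1)!}\min(f_1,f_2)^{d-1}$. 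A piecewise-linear case analysis in $x_1$ over $[\max(-s,-r),r]$, split at $0$, at $1-R$, and (when $r>s$) at the crossing $x_1=(r-s)/2$, shows that $\min(f_1,f_2)$ decomposes into two symmetric pieces of equal integral. Carrying out the integration yields $\mu_1(r)=(2r)^d/d!$ for $r\in[0,s]$ and $\mu_1(r)=(r+s)^d/d!$ for $r\in(s,1]$.

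\textbf{Conclusion.} Since $|B_\infty|=(1+s)^d$ and $|B_1|=R^d|\mathbb{B}_1^d|=(1+s)^d/d!$, the factorial appearing in $\mu_1$ cancels the one in $|B_1|$, so the normalized cumulative distribution functions $\mu_p(r)/|B_p|$ coincide in both regimes. By the layer cake identity,
$$A_1(t,R)=\int_0^1\omega(tr)\,d\!\left(\frac{\mu_1(r)}{|B_1|}\right)=\int_0^1\omega(tr)\,d\!\left(\frac{\mu_\infty(r)}{|B_\infty|}\right)=A_\infty(t,R),$$
and in fact both equal the explicit expression in formula (\ref{dcubesbis}). Taking $\sup_t\inf_R$ now yields the theorem. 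The main obstacle is the case analysis for $\mu_1(r)$, which is elementary but requires careful bookkeeping over several subintervals of $x_1$; the pleasant surprise is that the factorial arising from integrating an $\ell_1$ cross-section cancels exactly against the factorial in $|\mathbb{B}_1^d|$, forcing agreement with the cube distribution function in every dimension.
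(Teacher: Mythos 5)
Your proof is correct, and it takes a genuinely different (and arguably cleaner) route than the paper's. The paper starts from the already-reduced iterated integral in (\ref{1ballsformM}), splits it into two pieces $I_1+I_2$ according to the sign of $(1-R)+Ru_1$, performs the changes of variables $v=1-R+R(u_1+\rho)$, $y=Ru_1$, interchanges the order of integration, and massages the result into the form (\ref{dcubesbis}). You instead work one level up: both operator norms are averages of $\omega(t\|\cdot\|_p)$ over the optimal tangent ball, so it suffices to show that the pushforward of normalized Lebesgue measure on that ball under $\|\cdot\|_p$ is the same for $p=1$ and $p=\infty$. Your key geometric fact --- that $\bigl|B_1\bigl((1-R)e_1,R\bigr)\cap\mathbb{B}_1^d(r)\bigr|$ equals $(2r)^d/d!$ for $r\le s$ (when the small cross-polytope is entirely contained in the shifted one) and $(r+s)^d/d!$ for $s<r\le 1$, i.e.\ exactly $1/d!$ times the corresponding cube volume $\bigl|[-s,1]^d\cap[-r,r]^d\bigr|$ --- is correct (I verified the piecewise analysis of $\min(f_1,f_2)$: the crossing at $x_1=(r-s)/2$ lies below $1-R$ precisely because $r\le1$, and the two resulting pieces each integrate to $\frac1d\bigl(\frac{r+s}{2}\bigr)^d$), and the factorial cancels against $|B_1|=(1+s)^d/d!$. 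What your approach buys is an explicit structural explanation of the coincidence the paper calls unexplained: the distribution functions of the two norms on their respective optimal balls agree identically, not just the final integrals. What the paper's computation buys is that it never needs the sublevel sets to have a clean closed form; it would survive in situations where the intersection geometry is messier. One presentational nitpick: you should state explicitly that for $r\le s$ the containment $\mathbb{B}_1^d(r)\subset B_1((1-R)e_1,R)$ follows from the triangle inequality ($\|x-(1-R)e_1\|_1\le r+1-R\le R$), which makes that half of the case analysis immediate.
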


Recall that in the case of the $\ell_\infty$ norm the
asymptotic value 1 was obtained by using concentration near
the boundary of the cube $[-s,1]^d$, and more precisely,
near the norm 1 vectors, while in the $\ell_1$ case
we utilized concentration near the vertical equator. By the
preceding theorem one of these (different) arguments is redundant.

\begin{proof} Fix $t>0$. To see that the values of
$\left\|  M\right\|
_{\operatorname{Op}(\omega)}$
given by (\ref{dcubes}) and (\ref{1ballsformM})
are indeed the same, it is enough to show that
\begin{equation*}
  \inf_{1/2\le
R\leq1}\frac{d(d-1)}{2} \int_{-1}^{1}
\int_{0}^{1-|u_1|} \omega\left(t(|(1 - R) +  R u_1|+ R \rho)\right)
\rho^{d-2}d\rho du_1
\end{equation*}
\begin{equation}\label{norm1infty}
 = \inf_{0\leq s\leq1} \dfrac{d}{(1+s)^d}\left(
{\displaystyle 2^d \int_0^s z^{d-1}}\omega\left(  t z\right)
dz+\int_s^1(z+s)^{d-1}\omega(tz)dz\right).
\end{equation}
Write
\begin{equation*}
 I := \int_{-1}^{1}
\int_{0}^{1-|u_1|} \omega\left(t(|(1 - R) + R u_1|+ R \rho)\right)
\rho^{d-2}d\rho du_1=I_1+I_2,
\end{equation*}
where
\begin{equation*}
  I_1:=\int_{\frac{R - 1}{R}}^{1}
\int_{0}^{1-|u_1|} \omega\left(t((1 - R) + R (u_1+\rho))\right)
\rho^{d-2}d\rho du_1
\end{equation*}
and
\begin{equation*}
  I_2:=\int_{-1}^{\frac{R - 1}{R}}
\int_{0}^{1+u_1} \omega\left(t(R - 1 + R (\rho-u_1))\right)
\rho^{d-2}d\rho du_1.
\end{equation*}
To compute $I_1$, we use the change of variables
$v= 1 - R + R (u_1+\rho)$, $y= R u_1$ and Fubini's Theorem:
\begin{equation*}
  I_1=\int_{R - 1}^{R }\int_{1 - R +y}^{1-|y|+y}\omega (tv )
  \frac{(v + R -1 -y)^{d-2}}{R^d}dv
  dy
\end{equation*}
\begin{equation*}
=   \frac{1}{R^d}\int_0^1
\int_{\max\{R -1 ,\frac{v-1}{2}\}}^{v + R - 1}\omega (tv )(v + R -1 -y)^{d-2}dy dv
\end{equation*}
\begin{equation*}
  =\frac{1}{R^d(d-1)}\left(\int_{2R - 1}^1
 \omega (tv )\frac{(v+2R - 1)^{d-1}}{2^{d-1}}dv+\int_0^{2R - 1}\omega (tv) v^{d-1}dv\right).
\end{equation*}
Likewise, to compute  $I_2$ we set
$v=  R -1  + R (\rho-u_1)$, $y= R u_1$, and  interchange the order of
integration:
\begin{equation*}
  I_2=\int_{- R }^{R - 1}\int_{R - 1 -y}^{2 R - 1}\omega (tv) \frac{(v+1 - R +y)^{d-2}}{R^d}dv
  dy=\frac{1}{R^d(d-1)}\int_0^{2 R - 1}\omega (tv) v^{d-1}dv.
\end{equation*}
Adding up we obtain
\begin{equation*}
  I=\frac{1}{R^d(d-1)}\left(\int_{2R - 1}^1
  \omega (tv) \frac{(v+ 2 R-1)^{d-1}}{2^{d-1}}dv+2\int_0^{2 R - 1}\omega (tv) v^{d-1}dv\right).
\end{equation*}
To finish,  set $s= 2 R - 1$, multiply by $d(d-1)/2$, and take the corresponding infima to get  (\ref{norm1infty}).
\end{proof}

We conclude this section with four
questions and some variants of these. We conjecture that the following version of Theorems
\ref{2balls} and \ref{1balls} holds for all $p\in (1,2)$
 and $d\ge 2$:
 For every
 modulus of continuity  $\omega$ we have
\begin{equation}\label{pballsformM}
\left\|  M\right\|
_{\operatorname{Op}(\omega)} = \sup_{t>0}   \inf_{0\le
R\leq1}\frac{(d-1)\Gamma (1 + d/p)}{2 \omega\left( t\right)
\Gamma (1 + 1/p) \Gamma (1 + (d-1)/p)}
\times
\end{equation}
\begin{equation*} {\displaystyle\int_{-1}^{1}}
{\displaystyle\int_{0}^{(1-|u_1|^p)^{1/p}}}
\omega\left(t(|(1-R) +  R u_1|^p+R^p\rho^p)^{1/p}\right)
\rho^{d-2}d\rho du_1.
\end{equation*}
 Let $q = p/(p-1)$ be the conjugate exponent
of $p$.  Choose the same modulus of continuity $\omega$ in every dimension $d$.  Then
\begin{equation*}
\liminf_{d\to\infty} \left\|  M\right\|
_{\operatorname{Op}(\omega)} \ge
\sup_{t>0}\left\{\frac{\omega\left(
2^{-\frac{1}{q}} t\right)}{\omega\left( t\right)} \right\}.
\end{equation*}
and
\begin{equation*}
\limsup_{d\to\infty} \left\|  M\right\|
_{\operatorname{Op}(\omega)} \le
\inf_{r>1}\sup_{t>0}\left\{\frac{\omega\left(
2^{-\frac{1}{q}} r t\right)}{\omega\left( t\right)} \right\}.
\end{equation*}
Suppose additionally that   $\omega$ is concave. Then for every $d\ge 1$
\begin{equation}\label{upper}
\left\|  M\right\|
_{\operatorname{Op}(\omega)}\le\sup_{t>0}\left\{\frac{\omega\left(
2^{-\frac{1}{q}} t\right)}{\omega\left( t\right)} \right\}.
\end{equation}
In particular, for the H\"older and Lipschitz classes we obtain
$$\left\|  M\right\|
_{\operatorname{Op}(\alpha)}\le 2^{-\frac{\alpha}{q}} \mbox{ \ \ \ for all }d\ge 1,
\mbox{  and \ \ \ }\lim_{d\to\infty} \left\|  M\right\|
_{\operatorname{Op}(\omega)} = 2^{-\frac{\alpha}{q}}.$$

The main obstacle for proving this result would be removed
by a positive answer to the next question.

\vskip .2cm

{\bf Question 1.} Is $e_1$ a maximizing  vector for $1< p <2$
 when taking the supremum in
 (\ref{equalityopnorm})? Recall that by
 Lemma \ref{optimald} it is enough to consider the case $d=2$.

\vskip .2 cm

 It is
plausible that best bounds increase with the dimension for
 $1\le p <\infty$, as it happens in the case $p=\infty$. This
 would immediately imply that the asymptotic bounds are also
uniform upper bounds.

As we mentioned before, it seems likely that
 $v_p:= d^{-1/p} (1,1, \dots,1)$ is an optimizing
vector when $2 < p < \infty$. But while using $e_1$ leads to simplification of the
integral formulas, using $v_p$ does not. Furthermore,
$e_1$ fits well with Fubini, in the sense that  sections of
$d$-balls yield $d-1$-balls if the sections are perpendicular
to $e_1$. This does not happen with $v_p$, so a change of coordinates would not help on that respect.
In any case, we suspect that
on the range $2 < p < \infty$, as $p\to\infty$ constants become
increasingly worse and approach the bounds that hold for $p=\infty$.

\vskip .2cm

{\bf Question 2.} We have seen that if $f$ is Lipschitz and $M$ is the maximal function associated to euclidean balls, then for every dimension $d$, $\operatorname{Lip}(Mf) \le 2^{-1/2} \operatorname{Lip}(f)$, or equivalently, $\|DMf\|_\infty\le
2^{-1/2} \|Df\|_\infty$. It is natural to
expect a similar behavior for $p$ ``close" to $\infty$. More precisely,
given $c\in ( 2^{-1/2},1)$, is it possible to find a $p_c$ such that
for all $p\ge p_c$, if $Df\in L^p$, then $\|DMf\|_p\le c \|Df\|_p$?
Or given $p >>1$, is it possible to find such a $c\in ( 2^{-1/2},1)$?

\vskip .2cm

Current methods of proof use the $L^p$ inequalities satisfied by $M$ to obtain $W^{1,p}$
results.
Since $\|Mf\|_p\ge \|f\|_p$ always, this type of argument will never
yield constants below 1. On the other hand, if we fix $d$, by Corollary \ref{univLipbounds}
we have $\|DMf\|_\infty\le (1+d)^{-1} d \|Df\|_\infty$ for
the maximal function associated to {\em any} ball, so it is natural
to seek inequalities of the form $\|DMf\|_p\le c_p \|Df\|_p$, where
$c_p < 1$, $p$ is high enough, and $M$ is defined via an arbitrary
norm. At the other extreme, given $f\in W^{1,1}(\mathbb{R}^d)$ with
$d\ge 2$, it is not known whether there is a constant $c_1$ (independent
of $f$) such that $\|DMf\|_1\le c_1 \|f\|_{W^{1,1}(\mathbb{R}^d)}$.
We mention that for some related maximal operators,
such as, for instance, the strong maximal operator (where
averages are taken over rectangles with sides parallel to
the axes) such constants do not exist, cf. Theorem 2.21 of \cite{AlPe2}.
It follows from Theorem 2.5 of \cite{AlPe} that if $d=1$ then
$\|DMf\|_1\le c_1 \|Df\|_1$, and $c_1 = 1$ is sharp. For $d\ge 2$ it
is clear that if any such constant exists, it must be strictly larger
that 1 (take $f$ to be radial, for instance). Whether or not $c_1 <\infty$
for $d\ge 2$, one would expect $c_p > 1$ for small values of $p$, $c_p < 1$ for sufficiently large values of $p$, and $c_p = 1$ for some ``crossing"  $p$, which will likely depend on $d$ and the ball used to define
$M$. A motivation to obtain detailed information on $DMf$ comes not only from the possible use of $DMf$ as a substitute for $Df$, but  also because it will yield new information about the maximal function itself.

\vskip .2cm

On a more speculative mood, we note that where balls of arbitrarily
small radii have to be taken into account, the function is ``large"
and the maximal function coincides with it, so maximal functions
 with smaller Lipschitz norms, and hence lower rate of decay ``from the top", will tend to
be larger in an $L^p$ sense. Since asymptotically
constants are
smaller for $\ell_2$ balls than for cubes or $\ell_1$ balls (and we believe this is also the case for other balls), it is tempting to conjecture that the maximal function associated to
Euclidean balls is at least ``as efficient" at
capturing mass as maximal functions associated to other balls.
Our results suggest this only in the weakest possible sense, since
different norms are used to compute distances,
 the directions we consider when measuring the
modulus of continuity are those of fastest decay and not some ``average direction", and $L^p$ norms of extremal functions
vary with $p$ (at least in the bounded case).
Nevertheless, it seems
worthwhile to try to find out whether the Euclidean maximal function,
both in the centered and uncentered versions,
has larger operator
norm on $L^p$ spaces than maximal functions associated to other balls
(needless to say, weak type results in this line would also be interesting).
In fact, the weaker ``comparison theorem" suggested next would already have many consequences.

\vskip .2cm

{\bf Question 3.} Let $\mathcal{M}_e$ denote the maximal operator, either centered or uncentered,
associated to Euclidean balls, and $\mathcal{M}_b$ the corresponding
(centered or uncentered) operator associated to some other ball (defined by a different norm). Prove or refute the following statement:
For every $p\in (1,\infty)$,
$$10^{78} \|\mathcal{M}_e\|_{L^p(\mathbb{R}^d)\to L^p(\mathbb{R}^d)}
\ge \|\mathcal{M}_b\|_{L^p(\mathbb{R}^d)\to L^p(\mathbb{R}^d)}.$$
Of course, $10^{78}$ is not important here, any other constant $c$ would
do. But we do mean to emphasize that $10^{78}$ does not depend on anything, in particular not on $p$ or $d$. ``Constants" that depend on
the dimension are trivial to obtain by the equivalence of all norms in
$\mathbb{R}^d$. The preceding conjecture, if true in the centered case,
would imply that the uniform bounds in the dimension proved by E. M. Stein for Euclidean balls also hold for all other balls
(including for instance, cubes) and all $p > 1$. An opposing viewpoint can be found in
\cite{Mu}, pg. 298, where
it is suggested
that this result may be false for cubes and $p\le 3/2$. 

The corresponding version
of question 3 for the weak type (1,1) constants is also interesting.
An affirmative answer would entail that the best constants for
the centered maximal function defined using euclidean balls diverge
to infinity with the dimension, since this is the case for cubes,
cf. \cite{Al2}. Thus,  a long standing open
problem by Stein and Str\"omberg  would be solved, cf. \cite{StSt}. 

\vskip .2cm
From the perspective of comparing the sizes of the maximal operators
associated to euclidean balls and to cubes, it would probably
be more telling if Lipschitz or H\"older constants were
computed using the same distance (for example, the euclidean norm) in both cases. Of course,
this ``decoupling" between cubes and the $\ell_2$ norm means
that we are outside the scope of Theorem \ref{mainabst}. However,
the maximal operator associated to cubes fits well with the
product structure of $\mathbb{R}^d$, so it is natural, and
common, to
use it together with the natural distance on $\mathbb{R}^d$, the euclidean length.
The next question seems therefore interesting to us:

\vskip .2cm

{\bf Question 4.} If in Theorem \ref{cubes} we keep the maximal
operator associated to cubes but   consider
the $\ell_2$ instead of the $\ell_\infty$ norm, how do
the conclusions change?

\section
{Local  results in one dimension.}

Next we study the local case in one dimension, that is, when
the domain is a proper subinterval of $\mathbb{R}$. While the
idea of the proof is the same as in the previous results, formally
the next theorem does not follow from them, so we include the full argument.
In fact, notation is considerably simplified by the fact that only intervals with
$x$ as one endpoint need to be considered when computing $Mf(x)$.
In order to define any such  interval it is enough to specify the other extreme, there is no need
to talk about centers, radii and the relations between them.

We note that constants are worse in the local case  than in the
global case due, to the
fact that a proper subinterval $I$ of $\mathbb{R}$ has at least one
boundary point, say, for instance a left endpoint $a$. Then the
decay restrictions imposed by the modulus of continuity only hold
to the right of $a$, and so the level sets of an extremal function will
be in general smaller than they would be if the function were defined
over the whole real line. This is reflected in the different integral
formulas; recall that for $\mathbb{R}$, Theorem \ref{cubes} tells
us that
\begin{equation}\label{modintR}
\omega\left(Mf,t\right)\le \min_{0\le s\le 1}
\frac{1}{1 + s} {\displaystyle\int_{-s}^{1}}
\omega\left(|f|,  t u\right)du.
\end{equation}
The existence of a boundary point entails that we must take $s=0$ in the
local case, cf. (\ref{modint}) and (\ref{modintsh}) below.

\begin{remark}\label{bddint} If $f$ is uniformly continuous on a {\em bounded}
interval $I$, then its modulus of continuity is constant on
$[|I|, \infty)$, with value $\omega(f,|I|)$. Thus,
if we are given a modulus $\omega$, and $|I| < \infty$, we cannot
 expect to find a function $\psi\in  \operatorname{Lip} (\omega , I)$ with $\omega (\psi, t) =\omega(t)$ for all $t > 0$.  In general, the best we can do is to find $\psi$ so that $\omega (\psi, t) =\omega(t)$
on $(0, |I|]$.
\end{remark}

\begin{remark} By an interval $I$ we always mean a nondegenerate
interval, so the empty set $(a,a]$ and points $[a,a]$ are excluded. Also, the
requirement that subintervals be proper leaves out
the already studied case $I =\mathbb{R}$.
Other than that, there are no restrictions on the subintervals
of $\mathbb{R}$ considered in Theorem \ref{main1d}. Nevertheless,
for convenience we shall assume {\em in the proof} that intervals are
not open. The open case is handled in essentially the same way,
via a limit argument. So $I$ will contain at least one endpoint.
To simplify notation, we shall assume that this endpoint is
the origin, and furthermore, that it is the left endpoint of $I$.
\end{remark}

\begin{theorem}\label{main1d}  Let $I\subset\mathbb{R}$ be a proper subinterval,
and let $f:I\to \Bbb R$ be locally integrable. Then, for every
$t>0$,
\begin{equation}\label{modint}
\omega\left(Mf,t\right)\leq {\displaystyle\int_{0}^{1}}
\omega\left(|f|,  t u\right)du.
\end{equation}
Inequality (\ref{modint}) is sharp in the sense that for every
modulus of continuity $\omega$ and every $\delta > 0$ with $\delta \le |I|$, there exists
a nonnegative, uniformly continuous function $\psi:I\to\mathbb{R}$
such that for all $t\in (0,\delta)$, we have $\omega (\psi, t) =\omega(t)$
and
\begin{equation}\label{modintsh} \omega\left(M\psi,t\right) =
{\displaystyle\int_{0}^{1}} \omega\left( t u\right)du.
\end{equation}
Moreover, if $\omega$ is bounded, then the   function
$\psi$ can be chosen so that it satisfies $\omega (\psi, t) =\omega(t)$ and (\ref{modintsh})  for all $t > 0$ with $t < |I|$.
\end{theorem}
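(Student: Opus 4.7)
The plan is to first prove the upper bound (\ref{modint}) by a one-dimensional variant of the sup-inf argument behind Theorem \ref{mainabst}, and then to exhibit an explicit extremal function $\psi$ that is monotone and concentrated at a boundary point of $I$. Following the conventions of the preceding remarks, I would assume that $I$ is not open and has left endpoint at the origin; the right-endpoint case is symmetric, and open intervals are recovered by approximation from within.

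For the inequality, fix $x,y\in I$ with $0<y-x=h\leq t$ (WLOG $y>x$ and $Mf(x)\geq Mf(y)$), write $L:=\sup I\in(0,\infty]$, and to each averaging interval $[a,b]\ni x$ with $[a,b]\subseteq I$ associate the comparison interval $[a',b']:=[a,\min(b+h,L)]$. It always contains $y$ (because $y\leq L$ and $y\leq b+h$) and lies in $I$. Writing both averages in the form $\int_{0}^{1}|f|(a+(b-a)u)\,du$ and $\int_{0}^{1}|f|(a+(b'-a)u)\,du$, the displacement between integrands at parameter $u$ is $(b'-b)u\leq hu\leq tu$, so the modulus bounds their difference by $\int_{0}^{1}\omega(|f|,tu)\,du$; taking the supremum over $[a,b]\ni x$ yields (\ref{modint}). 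The case $y<x$ uses the mirror construction $[a',b']:=[\max(a-h,0),b]$, for which the analogous displacement is $(a-a')(1-u)\leq t(1-u)$, producing the same integral bound after the substitution $v=1-u$.

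For sharpness I would set
\begin{equation*}
\psi(x):=\bigl(\Omega-\omega(x)\bigr)^{+},\qquad x\in I,
\end{equation*}
with $\Omega:=\|\omega\|_{L^{\infty}([0,\infty))}$ in the bounded case (covering $t\in(0,|I|)$) and $\Omega:=\omega(\delta)$ otherwise (covering $t\in(0,\delta)$). Monotonicity and subadditivity of $\omega$ yield $\omega(\psi,t)=\omega(t)$ on the claimed range after separately inspecting pairs $(x,y)$ both in $\{\psi>0\}$, both outside, and one of each. Because $\psi$ is nonincreasing on $I$, for any averaging interval $[a,b]\ni t$ decreasing $a$ toward $0$ can only raise the mean, while increasing $b$ beyond $t$ can only lower it, so the supremum defining $M\psi(t)$ is attained on $[0,t]$ and
\begin{equation*}
M\psi(0)-M\psi(t)=\tfrac{1}{t}\int_{0}^{t}\omega(s)\,ds=\int_{0}^{1}\omega(tu)\,du,
\end{equation*}
giving the reverse inequality and hence (\ref{modintsh}).

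The principal obstacle is the loss of translation invariance near the boundary: the pure shift $[a+h,b+h]$ used in the global setting need not lie in $I$, so the argument of Theorem \ref{mainabst} does not carry over verbatim. Truncating to $[a,L]$ (resp.\ expanding only as far as $0$ on the left) repairs this at essentially no cost, because the reduced displacement $b'-b\leq h\leq t$ is still absorbed by $\omega(\cdot,tu)$. Everything else reduces to the routine monotonicity computations sketched above.
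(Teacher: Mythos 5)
Your proof is correct and follows essentially the same route as the paper: the upper bound comes from comparing an interval nearly optimal for the larger of $Mf(x)$, $Mf(y)$ with an affinely reparametrized interval containing the other point (the paper first reduces to the one-sided intervals $[x,S]$ versus $[y,S]$, which makes the boundary containment automatic, but your endpoint-extension version yields the identical displacement estimate $\le t u$), and your extremal function $(\Omega-\omega)^{+}$ with $\Omega=\omega(\delta)$ in the unbounded case is exactly the paper's choice $\Omega-\min\{\omega,\omega(\delta)\}$. The only nit: when the right endpoint $L$ of $I$ is finite and not contained in $I$, the comparison interval should end at $\max(b,y)$ rather than $\min(b+h,L)$ so that it stays inside $I$; since $y\in I$ this changes nothing in the estimate.
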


\begin{proof}
 Assume  $f\ge 0$, and note that when we evaluate $Mf(x)$, taking the supremum over all
intervals containing $x$ yields the same value as taking the
supremum over intervals having $x$ as a boundary point (in other
words, $Mf$ is the maximum of the right and left one sided maximal
functions). Let $x,y\in I$ be such that $Mf(y)<Mf(x)$. Then
 \begin{equation*}
Mf(x)-Mf(y) =\sup_{S\in I}\dfrac{1}{S-x}{\displaystyle\int_{x}^{S}}
f(u)du-Mf(y)
 \end{equation*}
 \begin{equation*}
\leq \sup_{S\in I}\left(\dfrac{1}{S-x}{\displaystyle\int_{x}^{S}}
f(u)du-\dfrac{1}{S-y}{\displaystyle\int_{y}^{S}} f(u)du\right)
\end{equation*}
\begin{equation}\label{leo}
\le \sup_{S\in I}{\displaystyle\int_{0}^{1}}
\left|f(x+\left(S-x\right)u)-f(y+\left(S-y\right)u)\right|du \le
\int_{0}^{1} \omega(\left|f\right|, |x-y|u)du.
\end{equation}
Now using a symmetry argument between $x$ and $y$, and taking the
supremum over $|x - y| \le t$, (\ref{modint}) follows.

We prove the optimality of (\ref{modint}) on the interval
$I=[0,\infty)$. The argument can be easily adapted to other proper
subintervals of $\mathbb{R}$.  If $\omega$ is bounded, set $\psi:=
\|\omega\|_{L^\infty([0,\infty ))} - \omega$. It is immediate from
the definitions that $\omega (\psi,t ) = \omega (t )$. Since $\psi$
is decreasing, $M\psi(t) = t^{-1}\int_0^t \psi(u)du = \int_0^1
\psi(tu)du$, so
\begin{equation}\label{bddt}
\omega\left(M\psi,t\right)\ge M\psi(0) - M\psi(t) =
\|\omega\|_{L^\infty([0,\infty ))} - {\displaystyle\int_{0}^{1}}
\psi(t u )du = {\displaystyle\int_{0}^{1}} \omega\left( t
u\right)du.
\end{equation}
Thus, by (\ref{modint}) we have equality. If $\omega$ is unbounded,
the result follows from the previous case by fixing $\delta > 0$ and
considering the bounded modulus $\omega_\delta (t) :=
\min\{\omega(t),\omega(\delta)\}$.
\end{proof}

\begin{remark} In (\ref{bddt}) we are crucially using  that $t <|I|$. Suppose $I = [0,1]$, and let $\omega (t) = \omega (\psi, t) = \min\{t,1\}$, where $\psi (x) = 1 - x$. Then for all $t\ge 1$,
$\omega (M\psi, t) = 1/2 < 1 = \lim_{t\to\infty} \int_{0}^{1} \omega\left( t
u\right)du.$ For the same reason, the restriction $t <|I|$ is also
needed in (\ref{localop}) below.
\end{remark}

Recall  that $
\|f\|_{\operatorname{Lip}(\omega, I)} =
\sup_{t > 0}\omega(f,t)/\omega(t)
$. In the next result we use the following notation:
$\psi _T: [0,\infty ) \to [0,\infty)$ is defined by
$\psi_T:=
(T - \omega)^+$. We shall see that only the functions
$\psi_T$ need to be taken into account when computing
$\|M\|_{\operatorname{Op} (\omega )}$. If $\omega$ is bounded
we set $T= \|\omega\|_{L^\infty([0,\infty ))}$ (as usual) and
just write $\psi$.

\begin{corollary}\label{lipholloc}  Let $\omega $ be a modulus of
continuity and let $I$ be a proper subinterval of $\mathbb{R}$. On $\operatorname{Lip} (\omega ,I)$,
\begin{equation}\label{localop}
\|M\|_{\operatorname{Op} (\omega )} =
\sup_{0< t < |I|} \frac1{\omega (t)} {\displaystyle\int_{0}^{1}} \omega\left( t u\right)du.
\end{equation}
 In the special case where $\omega (t) = t^\alpha$, $\alpha\in (0,1]$, that
is, over the H\"older and Lipschitz classes,
\begin{equation}\label{lipholop1d}
\|M\|_{\operatorname{Op} (\alpha )} = \frac1{1 + \alpha}.
\end{equation}
\end{corollary}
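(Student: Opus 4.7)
The plan is to derive Corollary \ref{lipholloc} directly from Theorem \ref{main1d}, by matching upper and lower bounds on the operator norm. For the upper bound I would take any $f$ with $\|f\|_{\operatorname{Lip}(\omega,I)}\le 1$, observe that $\omega(|f|,s)\le\omega(f,s)\le\omega(s)$ for every $s>0$, and apply (\ref{modint}) to get
\begin{equation*}
\frac{\omega(Mf,t)}{\omega(t)}\le \frac{1}{\omega(t)}\int_0^1 \omega(|f|,tu)\,du\le \frac{1}{\omega(t)}\int_0^1 \omega(tu)\,du
\end{equation*}
for $0<t<|I|$, since then $tu<|I|$ forces $\omega(|f|,tu)\le\omega(tu)$. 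For $t\ge|I|$ the quantity $\omega(Mf,\cdot)$ has already saturated at the total oscillation of $Mf$ on $I$, so monotonicity of $\omega$ gives $\omega(Mf,t)/\omega(t)\le \omega(Mf,|I|)/\omega(|I|)$, and this last value is recovered by passing to the limit $t\uparrow|I|$ in the display above. Taking $\sup_t$ then yields $\|Mf\|_{\operatorname{Lip}(\omega)}\le \sup_{0<t<|I|}\omega(t)^{-1}\int_0^1\omega(tu)\,du$.

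For the lower bound I would invoke the sharpness part of Theorem \ref{main1d}. Fix $\delta\in(0,|I|)$ and take $\psi_\delta=(T-\omega(|\cdot|))^+$ with $T=\omega(\delta)$, i.e.\ the $\psi_T$ introduced in the notation preceding the corollary. The subadditivity and monotonicity of $\omega$ give $\omega(\psi_\delta,s)\le\omega(s)$ for every $s$, with equality on $(0,\delta]$ (attained at the pair $x=0$, $y=s$), so $\|\psi_\delta\|_{\operatorname{Lip}(\omega)}=1$; meanwhile (\ref{modintsh}) supplies the exact identity $\omega(M\psi_\delta,t)=\int_0^1\omega(tu)\,du$ for $0<t<\delta$. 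Hence
\begin{equation*}
\|M\|_{\operatorname{Op}(\omega)}\ge \|M\psi_\delta\|_{\operatorname{Lip}(\omega)}\ge \sup_{0<t<\delta}\frac{1}{\omega(t)}\int_0^1 \omega(tu)\,du,
\end{equation*}
and sending $\delta\uparrow|I|$ produces the reverse inequality, proving (\ref{localop}).

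For the H\"older/Lipschitz specialization $\omega(t)=t^\alpha$ a direct computation gives $\int_0^1(tu)^\alpha\,du=t^\alpha/(\alpha+1)$, so the ratio in (\ref{localop}) is the dimensionless constant $1/(\alpha+1)$ independently of $t$ and of $|I|$, yielding (\ref{lipholop1d}) at once. I do not foresee a genuine obstacle here: the only slightly delicate bookkeeping concerns the regime $t\ge|I|$ in the upper bound, which is dispatched by the saturation remark above, and the verification that the prescribed extremal $\psi_T$ meets the two hypotheses required to invoke (\ref{modintsh}); both points are routine once Theorem \ref{main1d} is at hand.
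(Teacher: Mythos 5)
Your argument is correct and follows essentially the same route as the paper: the upper bound comes from inequality (\ref{modint}) combined with $\omega(|f|,s)\le\omega(f,s)\le\omega(s)$ for norm-one $f$ (plus the routine saturation remark for $t\ge|I|$), and the lower bound comes from the truncated extremals $(\omega(\delta)-\omega)^{+}$ of Theorem \ref{main1d}, letting $\delta\uparrow|I|$. The only cosmetic difference is that you treat bounded and unbounded $\omega$ uniformly via the truncation at level $\omega(\delta)$, whereas the paper splits into cases and uses an approximating sequence $f_n$ in the doubly unbounded case; the substance is identical.
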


\begin{proof}  If $\omega$ is bounded, (\ref{localop}) follows
immediately
from the fact that there is an extremal function $\psi$ such that   for all $t > 0$ with $t < |I|$,
 $\omega (\psi, t) =\omega(t)$ and
$ \omega\left(M\psi,t\right) =
\int_{0}^{1} \omega\left( t u\right)du.
$ And if $|I| < \infty$, replacing $\omega$ by
$\omega^\prime :=  \min\{ \omega, \omega (|I|)\}$,
we are back to the bounded case.

Suppose next that both $\omega$ and $I$ are unbounded. Without loss
of generality we may assume, in order to simplify notation, that $I = [0,\infty)$.  Select for each $n\ge 1$ a norm one
function $f_n \in \operatorname{Lip} (\omega ,I)$  approximating
the operator norm of $M$ to within $1/n$. Then
$$
\|M\|_{\operatorname{Op} (\omega )} \le \|M f_n\|_{\operatorname{Lip} (\omega )} + \frac{1}{n}  \le \sup_{t > 0} \frac1{\omega (t)} {\displaystyle\int_{0}^{1}} \omega\left(|f_n|, t u\right)du + \frac{1}{n}
\le \sup_{t > 0 } \frac1{\omega (t)} {\displaystyle\int_{0}^{1}} \omega\left( t u\right)du + \frac{1}{n}.
$$
On the other hand, since each $\psi_T$ has norm 1,
$$
\|M\|_{\operatorname{Op} (\omega )} \ge  \sup_{T > 0} \|M \psi_T\|_{\operatorname{Lip} (\omega )} =
\sup_{T > 0}\left\{ \sup_{0< t < T} \frac1{\omega (t)} {\displaystyle\int_{0}^{1}} \omega\left(\psi_T,  t u\right)du\right\}
$$
$$
 =
\sup_{T > 0} \left\{ \sup_{0< t < T} \frac1{\omega (t)} {\displaystyle\int_{0}^{1}} \omega\left( t u\right)du \right\}
= \sup_{t > 0} \frac1{\omega (t)}
{\displaystyle\int_{0}^{1}} \omega\left( t u\right)du .
$$

If $\omega (t) = t^\alpha$, $\alpha\in (0,1]$, then
\begin{equation*}
\|M\|_{\operatorname{Op} (\omega )} =
\sup_{0< t < |I|} \frac1{\omega (t)} {\displaystyle\int_{0}^{1}} \omega\left( t u\right)du = \int_0^1  u^\alpha du = \frac1{1 + \alpha}.
\end{equation*}
\end{proof}

\begin{remark}\label{landa} The sharp bounds given above yield information
about related inequalities. For instance, in Theorem 5.1 of
\cite{AlPe} the following Landau type inequality is proven: If
$u:[0,\infty) \to \mathbb{R}$ is
 an absolutely continuous function such that
its derivative is of bounded variation, then \begin{equation}
    \label{landau}
    \|u'\|_\infty^2 \le 48 \|u\|_\infty \left( \|DM(u'^+)\|_\infty +
    \|DM(u'^-)\|_\infty\right).
\end{equation}
 It is natural to seek a lower bound on the best possible constant $c$ that can
 replace $48$
in (\ref{landau}).   Recall that the classical (sharp) Landau
inequality assumes more regularity on the
 part of  $u^\prime$:
If it is absolutely continuous on $[0,\infty)$, then
 \begin{equation}\label{reallandau}
  \|u'\|_\infty^2\le 4\|u\|_\infty\|u''\|_\infty.
\end{equation} As noted in Remark 5.5, of \cite{AlPe}, (\ref{landau})
implies Landau's inequality, save for the issue of best constants.
Using Corollary \ref{lipholloc} and (\ref{landau}) we have that
$\|DM(u'^+)\|_\infty  + \|DM(u'^-)\|_\infty \le 2^{-1}\|D
u'^+\|_\infty  + 2^{-1}\|D u'^-\|_\infty \le \|D u'\|_\infty$, so
$4\le c\le 48$. On the whole real line, the constant from Theorem
5.1 of \cite{AlPe} appearing in (\ref{landau}) is 24 instead of 48,
and the best constant in the sharp Landau's inequality
(\ref{reallandau}) is 2 instead of 4. Arguing as before and using
Corollary \ref{lip1}, we have $\|DM(u'^+)\|_\infty  + \|DM(u'^-)\|_\infty \le
2 (\sqrt2 - 1)\|D u'\|_\infty$, so $1 /(\sqrt2 - 1)\le c\le 24$.
Thus, on $\mathbb{R}$ the best constant appearing in the generalized
Landau inequality (\ref{landau}) is strictly larger than the best
constant 2 in the classical Landau inequality.
\end{remark}

\begin{remark}\label{ex} By   Corollary \ref{lipholloc} the functions $\psi_\alpha (x):=\max\{1-x^\alpha,0\}$ are extremal
 in the classes $\operatorname{Lip}(\alpha) ([0,\infty))$.
  Now $M \psi_\alpha (x)=
1-\dfrac{x^{\alpha}}{1+\alpha}$ on $0\le x<1$  (and $M \psi_\alpha (x)=
\dfrac{\alpha }{(1+\alpha)x}$ on $1\le x <\infty$). Thus,
the H\"older or Lipschitz exponent of $M\psi_\alpha$ is no better than that
of $\psi_\alpha$.  But in this
example there is only one ``bad point": For every $\varepsilon > 0$,
both $\psi_\alpha$
and $M\psi_\alpha$ are Lipschitz on $[\varepsilon, \infty)$. In view of
the regularizing properties of $M$, one may wonder, for instance, whether $Mf$ must have
a better H\"older exponent than $f$ save for small sets, or
 in some ``almost everywhere" sense. We shall see below that the answer
is negative.

We also note that the preservation of regularity  does not extend to the $C^r$ classes. To
see this, let $f$ be the sum of two smooth bump functions with
disjoint supports and recall that $Mf$ is the maximum of the right
and left one sided maximal functions. This entails that between the
bumps of $f$,  $Mf$ achieves its minimum value at a point of non-differentiability, since  the right and left derivatives of $Mf$
are nonzero there and have opposite signs.
\end{remark}

Next we show that  a H\"older condition on
$f$ is not sufficient to ensure the differentiability almost
everywhere of $Mf$, so in particular
 $Mf$ can fail to be absolutely continuous. The following
  example also shows that the H\"older exponent of $Mf$
may be as bad as that of $f$ on a large set.
To prove this
we suitably modify the fat Cantor set defined in  Example 4.2 of \cite{AlPe}, and the functions used there.

\begin{example}\label{cantor} {\em
For every $\alpha \in (0,1)$ there exists a  function
 $f\in \operatorname{Lip}_\alpha ([0,1])$, with
$\operatorname{Lip}_\alpha (f) = 1$,  such that
$Mf$ is not differentiable on a set $E$ of positive measure.
Furthermore, given any $\beta \in (\alpha , 1)$, $Mf$ is not
locally H\"older $(\beta)$ at any point of $E$. More precisely,
given
any $x\in E$, and any   interval $I$, relatively open in
$[0,1]$ and with $x\in I$, we have
$Mf\notin \operatorname{Lip}_\beta (I)$}.

\vskip .2 cm

{\em Proof:}   As  in Example 4.2 of \cite{AlPe},
and unlike the
usual construction of the Cantor set,  instead of removing the ``central part" of every
interval at each stage, we
remove several parts. Let $F_0 = [0,1]$ and let
$F_n$ be the finite union of closed subintervals of $[0,1]$
obtained at step $n$ of the construction, to be described below. As usual
 $C:= \cap _n F_n$. Denote by $\ell (J)$ and $r(J)$
the left and right endpoints of an interval $J$. At stage $n$ we remove
 $2^{-2n}$ of the mass in the preceding set, so  $|C|  >
 1 - |\cup_1^\infty F_n^c| \ge 1 - \sum_1^\infty 2^{-2n} =  2/3$.
  Let $I_{n-1}$ be a
component of $F_{n-1}$, and let
$O(n,x)$ the open interval centered at $x$ of length
$2^{-4n}|I_{n-1}|$. Select the unique $2^{2n}$
 points $x_1,x_2 \dots ,x_{2^{2n}}\in I_{n-1}$ such that
i) $\ell(I_{n-1}) < x_1 <\dots  < x_{2^{2n}} < r(I_{n-1})$,
and ii) after the removal of the $2^{2n}$ open intervals
$O(n, x_{i})$ from $I_{n-1}$, the $2^{2n} + 1$ remaining
components have equal length, which therefore must be $(1-2^{-2n}) |
I_{n-1}|/(2^{2n}+1)$. Thus, $x_2 - x_1 = x_{i+1} - x_i$ for $i <
2^{2n}$, while $r(I_{n-1}) - x_{2^{2n}} = x_1 - \ell (I_{n-1})
< x_2 - x_1$. Note that the extremes of
$I_{n-1}$ are left untouched.

Repeat the same process with  the
other components of $F_{n-1}$ to obtain $F_n$, which by construction
is a disjoint union of closed intervals, all of length
 $(1-2^{-2n}) |I_{n-1}|/(2^{2n}+1)$.

Next we define the function $f$.
 Given $t > 0$,
set $g_t(x) = 1 - (t-x)^\alpha$ on $[0,t)$ and  extend it to
$(-t,t)$ as an even function. Then let $f$  be identically $1$ on
$C$, and let $f(x):= g_t(x - s)$ on each
 interval of type $O(j,s)$, where $t$ is half the length of $O(j,s)$.
To see why $f\in \operatorname{Lip}_\alpha ([0,1])$ and
$\operatorname{Lip}_\alpha (f) = 1$, note that if $x\in C$ and
$y\in [0,1]\setminus C$, say, with $y < x$, then there is a unique
$O(n,w)$ containing $y$, so we can replace $x$ by $r(O(n,w))$ and
then $f(x) - f(y) = f(r(O(n,w))) - f(y)$, while $r(O(n,w)) - y \le x
- y$; the case where  $x$ and $y$ belong to different components of $[0,1]\setminus C$  can be reduced
to the previous one: Let $y \in O(n,u)$,  $x \in O(m,w)$, and suppose $y < x$.   If $f(x) < f(y)$, replace $y$
with $\ell (O(m,w))$; otherwise, replace $x$ with $r(O(n,u))$. So it
is enough to verify that $f$ has the desired properties on the
closure of an arbitrary $O(j,s)$, and this is true since it holds
for $|x|^{\alpha}$, its translates, reflections, etc. Next, let $f_n:= 1$ on $F_n$ and $f_n := f$ on
$[0,1]\setminus F_n$. Then $f\le f_n$, so $Mf\le Mf_n$, and of
course the latter function is easier to work with. Let
$x_i$ and $O(n,x_i)$ be as above. For notational convenience, suppose
we are taking $i=2$; the same reasoning applies to the other $x_j$.
To evaluate $Mf_n (x_2)$ we only need to consider intervals
$J$ that have $x_2$ as an endpoint and, we claim,
 omit both $x_1$ and $x_3$. To see this,  assume that
 $x_2 = \ell (J)$. Since $f_n (x_2) < Mf_n (x_2) < 1$, an optimal
 $J$ must satisfy $f_n(r(J)) = Mf_n (x_2)$. Otherwise we could improve
the average by either shortening or enlarging $J$. Thus,
$r(J)\in O(n,x_3)\cap\{x < x_3\}$. Of course, the situation is
completely symmetrical if $x_2 = r(J)$ (here
$\ell(J)\in O(n,x_1)\cap\{x > x_1\}$), so we keep the
assumption $x_2 = \ell (J)$. Next, define $h\ge f_n$ by setting
$h\equiv 1$ on $O(n,x_3)\cap\{x < x_3\}$ and $h\equiv f_n$ elsewhere.
Clearly
$$
Mf_n (x_2) < \frac{1}{x_3 - x_2}\int_{x_2}^{x_3} h = \frac{(\frac{1-2^{-2n}}{2^{2n}+1}+2^{-4n-1})
|I_{n-1}| + \int_0^{2^{-4n-1} | I_{n-1}|} 1 - x^\alpha
dx}{(\frac{1-2^{-2n}}{2^{2n}+1}+2^{-4n}) |I_{n-1}|}
$$
$$
= 1-\frac{2^{(-4n-1)(\alpha+1)}
}{(\frac{1-2^{-2n}}{2^{2n}+1}+2^{-4n})(\alpha+1)}|I_{n-1}|^\alpha
<  1-2^{-4\alpha n-2n - \alpha -2}|I_{n-1}|^\alpha.
$$
Fix $z\in C\cap \{Mf =1\}$. For each $n$, let $I_{n,z}$ be the
component of $F_n$ that contains $z$, and let $w_n$ be midpoint of
the nearest  interval $O(n)$ deleted at step $n$ (if there are two
such midpoints, pick any). Then $|z - w_n|< 2^{-2n} | I_{n-1}|$, and
furthermore $|I_{n-1}| < 2^{-n(n-1)}$, since the number of
components of the
 set $F_{n-1}$ is  $\prod_{i=1}^{n-1} (2^{2i}+1) >
 \prod_{i=1}^{n-1} 2^{2i} = 2^{n(n-1)}$. Thus, for every $\beta\in (\alpha,1]$ we have
$$
\limsup_{w\to z} \frac{\left|Mf(z) -
Mf(w)\right|}{\left|z-w\right|^\beta} \ge \limsup_{n\to \infty}
\frac{1 - Mf_n(w_n)}{|z-w_n|^\beta} \ge \lim_{n\to \infty}
\frac{2^{-4\alpha n-2n - \alpha -2}}{2^{-2\beta n}(2^{-
n(n-1)})^{\beta-\alpha}}= \infty.
$$
Since $Mf \ge f$ a.e., the sets $E:= C\cap \{Mf =1\}$ and $C$ have the same measure, so
taking $\beta = 1$ it follows that $Mf$ is not
differentiable on a set of measure at least $2/3$,
while for $\beta\in (\alpha,1)$ we obtain the failure of the
local H\"older condition with exponent $\beta$.
\qed
\end{example}

\begin{remark}\label{localhidee}   The following example
shows that  local, higher dimensional generalizations of the
preceding results may fail. Recall that in Definition \ref{maxfun}
the balls in $\mathbb{R}^d$ we average over, are assumed to be fully
contained in the (open) domain $U$ of $f$. Let
$U=\left\{\left(x,y\right):\;x\in\mathbb{R}
^{d},\;y\in\mathbb{R},\;y>\exp\left(-1/\left|x\right|\right)
\right\}$ be a domain with a cusp and let $f(x,y)=y$. Then
$\lim_{y\downarrow 0} Mf(0,y)=0$, while $Mf(0,1) =\infty$.
\end{remark}

\end{document}